\theoremstyle{plain}
\newtheorem{thm}{Theorem}[section]
\newtheorem{thmm}{Theorem}[subsection]
\newtheorem{prop}{Proposition}[section]
\newtheorem{cor}{Corollary}[section]
\newtheorem{lem}{Lemma}[section]
\theoremstyle{definition}
\newtheorem{defn}{Definition}[section]
\newtheorem{rmk}{\textbf{Remark}}[section]
\numberwithin{equation}{section}
\newcommand\norm[1]{\lVert#1\rVert}
\newcommand\abs[1]{\lvert#1\rvert}
\begin{document}

\title{Generic behavior of differentially positive systems\\ on a globally
orderable Riemannian manifold}

\setlength{\baselineskip}{16pt}

%\author {
%Lin Niu$^a$ and Yi Wang$^b$\thanks{Partially supported by NSF of China No.11771414, 11471305 and Wu Wen-Tsun Key Laboratory.} \\
%\\
%\\[2pt]
%{ $^{a}$School of Mathematics and Physics}\\
%{ University of Science and Technology Beijing}\\
%\vspace{\bigskipamount}
%{ Beijing, 100083, P. R. China}\\
%{ $^{b}$School of Mathematical Sciences}\\
%{ University of Science and Technology of China}\\
%\vspace{\bigskipamount}
%{ Hefei, Anhui, 230026, P. R. China}\\
%}
%School of Mathematical Science\\
% University of Science and Technology of China
%\\ Hefei, Anhui, 230026, P. R. China

\author{
Lin Niu\thanks{Supported by the National Natural Science Foundation of China No.12201034 and 12090012.}\\
School of Mathematics and Physics\\
University of Science and Technology Beijing\\
Beijing, 100083, P. R. China
\\[3mm]
Yi Wang\thanks{Supported by the National Natural Science Foundation of China No.12331006.}
\\
School of Mathematical Sciences\\
University of Science and Technology of China\\
Hefei, Anhui, 230026, P. R. China
}

\date{}
\maketitle
% insert the table of contents
%\tableofcontents

%---------------------SECTION DIVIDE LINE---------------------------
\begin{abstract}

Differentially positive systems are the nonlinear systems whose linearization along trajectories preserves a cone field on a smooth Riemannian manifold. One of the embryonic forms for cone fields in reality is originated from the general relativity. By utilizing the Perron-Frobenius vector fields and the $\Gamma$-invariance of cone fields, we show that generic (i.e.,``almost all" in the topological sense) orbits are convergent to certain single equilibrium. This solved a reduced version of Forni-Sepulchre's conjecture in 2016 for globally orderable manifolds.
\vskip 3mm

\par
\textbf{Keywords}:  Differential positivity, Causal order, Generic convergence, Homogeneous space, Cone field
\end{abstract}

\par \quad \quad \textbf{AMS Subject Classification (2020)}: 37C20, 37C65, 53C30, 22F30

\section{Introduction}

Differential analysis provides a general framework for investigating a nonlinear dynamical system by analyzing the linearization of the system at every point in the state space. The
motivation is that the local behavior of a system often has strong implications for the global nonlinear behavior. The current series of papers focus on the nonlinear dynamical system whose linearization along trajectories preserves a cone field. Here, a cone field $C_M$ assigns to each point $x$ of a manifold $M$ a closed convex cone $C_M(x)$ in the tangent space $T_{x}M$ of $x$.

One of the embryonic forms for cone fields in reality is originated from the general relativity, for which the time-orientable space-time (i.e., a connected 4-dimensional Hausdorff $C^{\infty}$-manifold  endowed with a Lorentz metric) naturally generates at each point a Lorentzian cone in tangent space (see, e.g. \cite{Beemandehrlich81,Hawkingandellis73,Penrose72} or Appendix A.1). The structure of a cone field in the time-orientable space-time turns out to be one of the crucial mathematical tools in the study of general relativity including causality theory, singularity theory and black holes, etc.. Among others, the well-known non-spacelike curve (also called causal curve) in these theories (c.f. Hawking and Ellis \cite{Hawkingandellis73} and Penrose \cite{Penrose72}) is actually one whose tangent vector at each point falls in the Lorentzian cone field. Moreover, the causally related points in a space-time should be joined by a non-spacelike curve.

From this point of view, a cone field on the manifold $M$ naturally induces a ``conal order relation" as follows: two points $x,y\in M$ are {\it conal ordered}, denoted by $x\leq_M y$, if there exists a conal curve on $M$ beginning at $x$ and ending at $y$. Here, a conal curve is a piecewise smooth curve whose tangent vector lies in the cone at every point along the curve wherever it is defined (see Definition \ref{conal curve} and Fig.\ref{figa}). In particular, in the setting of space-times (see \cite{Segal76,GutandLevichev,Levichev} and \cite{HilgertHofmannLawson,Olshanskii82,Viberg}), the causal curves are actually the conal curves; while, the causally related points are the ones that are conal ordered. Besides, conal order also has various applications in hyperbolic partial differential equations and harmonic analysis (see \cite{Faraut87,Faraut91}), as well as in the theory of Wiener-Hopf operators on symmetric spaces \cite{HilgertNeeb}.

It deserves to point out that, unlike the standard order relation induced by a single closed convex cone in a topological vector space, the conal order relation ``$\leq_M$" induced by a cone field on $M$ is certainly reflexive and transitive, but not always antisymmetric. In fact, there are examples of $M$ that contain the closed conal curves (e.g., the closed timelike curves in time-orientable space-times \cite[Chapter 5]{Hawkingandellis73}), which reveals that the anti-symmetry fails.
Moreover, the relation ``$\leq_M$" is not necessarily closed, i.e., the set $\{(x,y)\in M\times M: x\leq_M y \}$ is not a closed subset in $M\times M$ (see \cite[p.299]{Lawson89} or \cite[p.470]{Neeb91}). For instance, such set is not necessarily closed in Minkowski space (see \cite[p.183]{Hawkingandellis73} or \cite[p.12]{Penrose72}).

In many recent works \cite{Forni2015,ForniandSepulchre14,ForniandSepulchre16,MostajeranSepulchre18SIAM,MostajeranSepulchre18homogeneous}, the nonlinear system whose linearization along trajectories preserves a convex cone field is also referred as differentially positive system, the flow of which naturally keeps the conal order ``$\leq_M$". To the best of our knowledge, Forni and Sepulchre \cite{Forni2015,ForniandSepulchre14,ForniandSepulchre16} first studied the dynamics of differentially positive systems. Among others, they constructed on $M$ a canonical defined vector field $\mathcal{W}$, called the Perron-Frobenius vector field, such that the vector $\mathcal{W}(x)$ lines in the interior of the cone $C_M(x)$ at each point $x\in M$. By appealing to the Perron-Frobenius vector field $\mathcal{W}$, a dichotomy characterization of limit sets for differentially positive systems was provided. Based on this, Forni and Sepulchre \cite[p.353]{ForniandSepulchre16} further posed the following
\vskip 3mm

\noindent {\bf Conjecture} ([Forni and Sepulchre \cite{ForniandSepulchre16}). {\it For almost every $x\in M$, the $\omega$-limit set $\omega(x)$ is given by either a fixed point, or a limit cycle, or fixed points and connecting arcs.
}

\vskip 3mm

This conjecture predicts what typically happens to orbits, as time increases to $+\infty$. The description of typical properties of orbits usually means those shared by ``most" orbits.

\vskip 2mm
In our present work, we will tackle this conjecture and made an attempt to establish the asymptotic behavior of ``most" orbits for differentially positive systems. For this purpose, we first introduce the following assumption for $M$:

\begin{enumerate}[{\bf (H1)}]
	\item $M$ is globally orderable equipped with a continuous solid cone field.
\end{enumerate}

(H1) means that the conal order ``$\le_M$" on $M$ is a partial order (see \cite{Lawson89,HilgertHofmannLawson,HilgertNeeb93,Neeb91} and Definition \ref{globally orderable}); and hence, it is actually anti-symmetric.
This occurs naturally in various situations. One of the well-known examples is a homogeneous space of positive definite matrices with the affine-invariant cone field (see, e.g, \cite[Section 3]{MostajeranSepulchre18orderingmatrices}).
As a matter of fact, (H1) excludes the occurrence of the closed conal curves (see \cite[Section 5]{Lawson89}). In the setting of space-times, such closed conal (causal) curves would seem to generate paradoxes involving causality and are said to ``violate causality" (\cite{Beemandehrlich81,Hawkingandellis73,Hawkingandsachs74,Levichev,Penrose72}).
\vskip 2mm

As a consequence, by (H1), the possibility of a limit cycle, or fixed points and connecting arcs (which are all closed conal curves) can be ruled out. In other words, Forni-Sepulchre's Conjecture is naturally reduced to the following

\vskip 3mm
\noindent {\bf Conjecture A}. {\it For almost every $x\in M$, the $\omega$-limit set $\omega(x)$ is a singleton.
}
\vskip 3mm

In the present paper, we will prove conjecture A under the following two reasonable assumptions:

\begin{enumerate}
	\item[{\bf (H2)}] The conal order ``$\leq_M$" is quasi-closed.
    \item[{\bf (H3)}] Both the cone field $C_M$ and the Riemannian metric on $M$ are $\Gamma$-invariant.
\end{enumerate}

More precisely, our main result is the following

\begin{thmm}\label{genericconvergence}
	Assume that {\rm (H1)-(H3)} hold. Then, for almost every $x\in M$, the $\omega$-limit set $\omega(x)$ is a singleton.
\end{thmm}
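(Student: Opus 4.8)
The plan is to run a Hirsch-type generic-convergence argument adapted to the conal order, with the Perron--Frobenius vector field $\mathcal{W}$ playing the role of the positive direction. First I would record the two elementary consequences of differential positivity: (a) the flow $\phi_t$ is \emph{monotone} for $\le_M$, because $D\phi_t$ carries $C_M$ into $C_M$ and hence carries a conal curve from $x$ to $y$ onto a conal curve from $\phi_t(x)$ to $\phi_t(y)$, so $x\le_M y$ implies $\phi_t(x)\le_M\phi_t(y)$ for $t\ge 0$; and (b) since $\mathcal{W}(x)\in\operatorname{int}C_M(x)$ for every $x$ (so $\mathcal{W}$ is nonvanishing), the flow $\psi_s$ of (a rescaling of) $\mathcal{W}$ is strictly order-increasing, $\psi_{s_1}(x)\ll_M\psi_{s_2}(x)$ whenever $s_1<s_2$, where $\ll_M$ is the order associated with $\operatorname{int}C_M$. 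By (H3) the field $\mathcal{W}$ and the metric are $\Gamma$-invariant, so $\psi_s$ commutes with the $\Gamma$-action; this homogeneity is what lets me transport local statements around $M$ and, together with global orderability (H1), is what should guarantee that the closed order intervals $[a,b]_M=\{z : a\le_M z\le_M b\}$ are compact (a Lawson-type fact), which is the compactness input every monotone-convergence argument needs.

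Next I would establish two structural lemmas. \textbf{Nonordering of limit sets:} if $p,q\in\omega(x)$ and $p\le_M q$, then $p=q$; otherwise, invariance of $\omega(x)$, monotonicity of $\phi_t$, quasi-closedness (H2) and compactness of $\omega(x)$ allow one to produce a nonconstant closed conal curve inside $\omega(x)$, contradicting antisymmetry of $\le_M$, i.e. the absence of closed conal curves guaranteed by (H1). \textbf{Monotone convergence:} a forward-precompact semiorbit that is eventually $\le_M$-monotone converges to an equilibrium — its $\omega$-limit set is then totally ordered, hence a singleton by the first lemma, and (H2) upgrades ``$\omega(x)$ is a point'' to genuine convergence. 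Combining these with the Forni--Sepulchre dichotomy and the observation that a Perron--Frobenius curve (an orbit tangent to $\mathcal{W}$, hence a conal curve) contained in a compact set is $\ll_M$-monotone and therefore, by monotone convergence, has equal $\alpha$- and $\omega$-endpoints $p\le_M\gamma(t)\le_M p$ and so is constant, one rules out the non-fixed-point alternative of the dichotomy: \emph{every forward-precompact orbit converges to a single equilibrium.}

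It remains to see that this forces $\omega(x)$ to be a singleton for generic $x$, and here I would use the foliation of $M$ by the orbits of $\mathcal{W}$. In a flow box around $x$, write $x_s=\psi_s(x)$; then $s\mapsto x_s$ is strictly $\ll_M$-increasing and, by monotonicity, so is $s\mapsto\phi_t(x_s)$ for each $t\ge 0$, which (via compactness of order intervals and solidity) forces the set of parameters with forward-precompact orbit to be an interval and the resulting equilibrium $s\mapsto e(s)$ to be $\le_M$-monotone, hence continuous off a countable set; assembling the leaf-wise statements by a Fubini argument over the foliation for the measure-zero version and a Kuratowski--Ulam argument for the Baire-category version (using that the convergence set is of suitable Borel class, again by (H2)) shows the exceptional set is negligible in $M$. \textbf{The main obstacle} I anticipate is exactly the limit-set lemmas under hypothesis (H2) rather than under a genuinely closed order: the classical monotone-systems arguments pass freely to limits in the relation $\le_M$, whereas here that relation is not closed, so quasi-closedness must be used to salvage precisely those limiting steps — closedness of $\le_M$ on limit sets, continuity of order intervals, and the passage from ``$\omega(x)$ is a point'' to convergence — and making these interlock with the $\Gamma$-invariance and with the compactness of order intervals is the delicate part of the proof.
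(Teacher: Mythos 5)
Your argument breaks at the step where you rule out the non-equilibrium alternative of the Forni--Sepulchre dichotomy and conclude that \emph{every} forward-precompact orbit converges to a single equilibrium. That conclusion is strictly stronger than the theorem and is already false in the flat, strongly monotone special case (strongly cooperative flows in $\mathbb{R}^3$ can have unstable periodic orbits, whose points do not converge; only \emph{generic} convergence holds). The specific sub-step that fails is the claim that a Perron--Frobenius curve contained in a compact set has equal $\alpha$- and $\omega$-endpoints and is therefore constant: compactness does not force the endpoints of a strictly conal curve to coincide, and (H1) only excludes \emph{closed} conal curves, not non-constant ones. Similarly, your proof of the non-ordering of limit sets is not viable as sketched: from $p\leq_M q$ with $p,q\in\omega(x)$ you get a conal curve from $p$ to $q$ that need not lie in $\omega(x)$, and there is no way to close it into a closed conal curve. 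The paper instead derives non-ordering from a convergence criterion ($x\leq_M \varphi_T(x)$ for some $T>0$ forces convergence to an equilibrium, using (H2) to pass to the limit along $\varphi_{nT}(x)$ and (H1) for antisymmetry), and part (b) of that lemma additionally requires Closing-Lemma and perturbation arguments.

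More importantly, your proposal never confronts the central difficulty that the paper's Proposition 3.1 is designed to solve: if $x\leq_M y$ and $\omega(x)=\omega(y)\subset E$, one must show this common limit set is a singleton. This is exactly where (H3) enters in the paper: at each equilibrium of the limit set the Perron--Frobenius direction is an eigenvector of $d\varphi_\tau$ with eigenvalue $\rho(e)>1$ (via the Perron--Frobenius theorem and the non-ordering lemma), and the $\Gamma$-invariant metric is used to transport tangent vectors to a common base point and propagate a uniform length-growth estimate $L(\gamma_{n+1})\geq l\,L(\gamma_n)$, $l>1$, along the images $\gamma_n=\varphi_{n\tau}\circ\gamma$ of the connecting conal curve; this contradicts $L(\gamma_n)\to 0$, which itself is obtained from the strong-point property of globally orderable manifolds. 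In your outline, (H3) only appears in two unsupported claims: that the flow of the Perron--Frobenius field ``commutes with the $\Gamma$-action'' ($\Gamma$ is merely a family of linear maps between tangent spaces, not a group acting on $M$, and the PF field is defined only on compact invariant sets and is in general only continuous, so it does not give a foliation with unique leaves), and that order intervals are compact, which is neither assumed nor proved in the paper (the paper uses the compactness condition (P) and a $C^\infty$ section of the cone field instead). Consequently the final Fubini/Kuratowski--Ulam step has no foundation; the paper's genericity argument is different: it proves a sequential limit-set trichotomy along monotone approximating sequences and deduces directly that the interior of the set of convergent points is dense.
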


This theorem, in a detailed version, will be proved in Section 3 (see Theorem \ref{convergent thm}).
It concludes that generic (i.e.,``almost all" in the topological sense) orbits are convergent to a single equilibrium.

The quasi-closedness of ``$\leq_M$" in (H2) means that {\it $x\leq_M y$ whenever $x_n\to x$ and $y_n\to y$ as $n\to \infty$ and $x_n\ll_M y_n$ for all integer $n\ge 0$} (see Definition \ref{quasi-closed}). Here, we write $x\ll_M y$ if there exists a so-called \textit{strictly} conal curve (whose tangent vector lies in the interior of cone at every point along the curve) on $M$ beginning at $x$ and ending at $y$ (see Definition \ref{conal curve}). We point out that (H2) is motivated by the causal continuity of space-times in Hawking and Sachs  \cite{Hawkingandsachs74}, while the notation $``\ll_M"$ is derived from the timelike curve in general relativity (see e.g., \cite{Penrose72}). In fact, one can prove that the conal order $``\le_M"$ is indeed quasi-closed in many causally continuous space-times (see Theorem \ref{quasiclosed} in the Appendix).

As for (H3), a cone field $C_M$ is called \textit{$\Gamma$-invariant} if there exists a linear invertible mapping $\Gamma(x_1,x_2) : T_{x_1}M \to T_{x_2}M$ for all $x_1,x_2\in M$, such that $\Gamma(x_1,x_2)C_M(x_1)=C_M(x_2)$.
While, the Riemannian metric $(\cdot,\cdot)_x$ on $M$ is \textit{$\Gamma$-invariant} if $(u,v)_{x_1}=(\Gamma(x_1,x_2)u,\Gamma(x_1,x_2)v)_{x_2}$ for all $x_1,x_2\in M$ and $u,v\in T_{x_1}M$.
Actually, $\Gamma$-invariance is motivated by the homogeneous structure of the manifolds.
A well-known example admitting (H3) is a homogeneous space $M=G/H$ of a connected Lie group $G$ assigned to each point $x\in M$ a closed convex cone in the tangent space $T_{x}M$ such that the cone field is invariant under the action of $G$ (see e.g., \cite{Lawson89,Neeb91,HilgertNeeb93}); and
a homogeneous Riemannian metric on such homogeneous space is naturally $\Gamma$-invariant (see e.g., \cite{Arvanitoyeorgos,Helgason}).
In the Appendix A.2, more detailed structures of the globally orderable homogeneous spaces are presented.
%	Roughly speaking, the homogeneity of the metric ensures that local measures of length are invariant over the manifold, which is a critical ingredient in our proof.
%\end{rmk}
We further mention here that the homogeneous cone field comes from the Lie theory.
It is shown that cone fields arise as quotients these so-called Lie wedge fields (see \cite[Lemma VI.1.5]{HilgertHofmannLawson}). This provides a crucial link between the Lie theory of semigroups and that of cone fields on manifolds (see \cite{HilgertHofmannLawson,HilgertNeeb93,HilgertOlafsson97,Lawson89,Neeb91}). Very recently, the works in \cite{Neebolafsson21,Neebolafsson21b,Neebolafsson23,Morinellineeb21,Neeborstedolafsson21}
form an ongoing project aiming at the connections between causal structures on homogeneous spaces, algebraic quantum field theory, modular theory of operator algebras and unitary representations of Lie groups.
\vskip 2mm

Needless to say, differential positivity in Forni and Sepulchre \cite{ForniandSepulchre16} with respect to a constant cone field on a flat space $M$ is reduced to the classical monotonicity (\cite{H88,HS05,S95,S17,Chueshov02,Jiangandzhao05,ShenandYi,Polacik02,Polacik89,SmithThieme91,HessandPolacik,Mierczynskiandshen08,PolacikandTerescak92,Terescak,WangandYao20,WangandYao22}) with respect to a closed convex cone. From this point of view, differentially positive systems can be regarded as a natural generalization of the so-called classical monotone systems to nonlinear manifolds.
We pointed out that the order introduced by a closed convex cone is a closed partial order (see the detail in Appendix A.3). In such a flat space, Theorem \ref{genericconvergence} automatically implies the celebrated Hirsch's Generic Convergence Theorem \cite{H88}.

The paper is organized as follows. In Section $2$, we introduce some notations and definitions and summarize some preliminary results. Theorem \ref{genericconvergence} (i.e., Theorem \ref{convergent thm}) with its proof will be given in Section $3$. Several fundamental tools and critical lemmas, which turns out be useful in the proof of Theorem \ref{genericconvergence} (or Theorem \ref{convergent thm}), will be postponed in Section $4$. Finally, in the Appendix, we will present the order structures on space-times, the globally orderable homogeneous spaces with homogeneous cone fields, as well as the differential positivity in the flat spaces.

%------------------------SECTION DIVIDE LINE-------------------
\section{Notations and preliminary results}

Throughout this paper, $M$ will be reserved as a smooth manifold of dimension $n$. The tangent bundle is denoted by $TM$ and the tangent space at a point $x\in M$ by $T_{x}M$. $M$ is endowed with a Riemannian metric tensor, represented by a inner product $(\cdot,\cdot)_x$ on the tangent space $T_{x}M$.
We set $\abs{v} _x:=\sqrt{(v,v)_x}$ for any $v\in T_{x}M$ (sometimes we omit the subscripts $x$ in $\abs{\cdot}_x$) and the Riemannian metric endows the manifold with the Riemannian distance $d$. We assume that $(M,d)$ is a complete metric space. %The metric space topology and the manifold topology agree.

Let $E$ be a $n$ dimensional real linear space. A nonempty closed subset $C$ of the linear space $E$ is called a \textit{closed convex cone} if $C+C\subset C$, $\alpha C\subset C$ for all $\alpha \geq 0$, and $C \cap (-C) = \{0\}$. A convex cone $C$ is \textit{solid} if its interior $\text{Int} C \neq \emptyset$. A convex cone $C'$ in $E$ is said to \textit{surround} a cone $C$ if $C\setminus \{ 0\}\subset \text{Int}C'$.

A \textit{cone field} on a manifold $M$ is a map $x \mapsto C_{M}(x)$, such that $C_{M}(x)$ is a closed convex cone in $T_{x}M$ for each $x\in M$.
A manifold equipped with a cone field is called a \textit{conal manifold}. A cone field is \textit{solid} if each convex cone $C_{M}(x)$ is solid.

A cone field $C_M$ is called a \textit{$\Gamma$-invariant} cone field if there exists a linear invertible mapping $\Gamma(x_1,x_2) : T_{x_1}M \to T_{x_2}M$ for each $x_1,x_2\in M$, such that $\Gamma(x_1,x_2)C_M(x_1)=C_M(x_2)$. Moreover, $\Gamma$ is continuous with respect to $(x_1, x_2)$ and $\Gamma(x,x)=\text{Id}_x$ for all $x\in M$, where $\text{Id}_x : T_xM \to T_xM$ is the identity map.
We say that the Riemannian metric is \textit{$\Gamma$-invariant} under the linear invertible mapping $\Gamma$ if $(u,v)_{x_1}=(\Gamma(x_1,x_2)u,\Gamma(x_1,x_2)v)_{x_2}$ for all $x_1,x_2\in M$ and $u,v\in T_{x_1}M$. In this case, $\abs{v}_{x_1}=\abs{\Gamma(x_1,x_2) v}_{x_2}$ holds for all $x_1,x_2\in M$ and $v\in T_{x_1}M$.

Pick a smooth chart $\Phi : U \to V$, where $U\subset M$ is an open set containing $x$ and $V\subset \mathbb{R}^n$ is an open set. Let $C_{M}^{\Phi}(y)\subset \mathbb{R}^n$ denote the representation of the cone field in the chart, i.e., $d\Phi(y)(C_{M}(y))=\{ \Phi(y)\}\times C_{M}^{\Phi}(y)$. Following \cite{Lawson89},
a cone field $x \mapsto C_{M}(x)$ on $M$ is \textit{upper semicontinuous} at $x\in M$ if given a smooth chart $\Phi : U \to \mathbb{R}^{n}$ at $x$ and a convex cone $C'$ surrounding $C_{M}^{\Phi}(x)$, there exists a neighborhood $W$ of $x$ such that $C_{M}^{\Phi}(y)\subset C'$ for all $y\in W$. The cone field is \textit{lower semicontinuous} at $x$ if given any open set $N$ such that $N\cap C_{M}^{\Phi}(x)$ is non-empty, there exists a neighborhood $W$ of $x$ such that $N\cap C_{M}^{\Phi}(y)$ is non-empty for all $y\in W$. A cone field is \textit{continuous at $x$} if it is both lower and upper semicontinuous at $x$ and \textit{continuous} if it is continuous at every $x$.

\begin{defn}\label{conal curve}
    A continuous piecewise smooth curve $t \mapsto \gamma(t)$ defined on $[t_0,t_1]$ into a conal manifold $M$ is called a \textit{conal curve} if $\gamma'(t)\in C_{M}(\gamma(t))$ whenever $t_0\leq t<t_1$, in which the derivative is the right hand derivative at those finitely many points where the derivative is not continuous. (See Fig.\ref{figa}). Moreover, $\gamma$ is called a \textit{strictly conal curve} if $\gamma'(t)\in \text{Int} C_{M}(\gamma(t))$ for $t_0\leq t<t_1$.
\end{defn}

For two points $x,y\in M$, we say $x$ and $y$ are ordered, denoted by $x\leq_M y$, if there exists a conal curve $\alpha : [t_0,t_1]\subset \mathbb{R} \to M$ such that $\alpha(t_0)=x$ and $\alpha(t_1)=y$.
This relation is an \textit{order} on $M$. In fact, it is always reflexive (i.e., $x\leq_M x$ for all $x\in M$) and transitive (i.e., $x\leq_M y$ and $y\leq_M z$ implies $x\leq_M z$). The order ``$\leq_M$" is referred to as the \textit{conal order}.
We write $x\ll_M y$ if there exists a so-called \textit{strictly} conal curve $\gamma$ with $\gamma(t_0)=x$ and $\gamma(t_1)=y$. Clearly, the relation ``$\ll_M$" is always transitive.
Let $U,V$ be the subsets of $M$. We write $U\leq_M V$ (resp., $U\ll_M V$) if $x\leq_M y$ (resp., $x\ll_M y$) for any $x\in U$ and $y\in V$.

The following proposition implies that the relation ``$\ll_M$" is \textit{open}.

\begin{prop}\label{strong order open property}
	If $x\ll_M y$, then there exist neighborhoods $U$ of $x$, $V$ of $y$ such that $U\ll_M V$.
\end{prop}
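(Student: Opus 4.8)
The engine of the argument will be the fact that continuity of the cone field makes the ``open–cone set''
\[
\mathcal{O}\;:=\;\{(p,v)\in TM:\ v\in\text{Int}\,C_M(p)\}
\]
an open subset of $TM$. I would establish this first, working in a chart $\Phi$: if $(\zeta_j,w_j)\to(\zeta_\infty,w_\infty)$ with $w_j\notin\text{Int}\,C_M^\Phi(\Phi^{-1}(\zeta_j))$ for all $j$, Hahn--Banach produces unit functionals $\varphi_j$ in the dual cone of $C_M^\Phi(\Phi^{-1}(\zeta_j))$ with $\langle\varphi_j,w_j\rangle\le 0$; passing to a subsequence $\varphi_j\to\varphi_\infty$, lower semicontinuity of the cone field lets one approximate any vector of $C_M^\Phi(\Phi^{-1}(\zeta_\infty))$ by vectors of the cones $C_M^\Phi(\Phi^{-1}(\zeta_j))$, so that $\varphi_\infty\neq0$ lies in the dual cone of $C_M^\Phi(\Phi^{-1}(\zeta_\infty))$ and $\langle\varphi_\infty,w_\infty\rangle\le0$, i.e.\ $w_\infty\notin\text{Int}\,C_M^\Phi(\Phi^{-1}(\zeta_\infty))$. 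Hence the complement of $\mathcal{O}$ is closed. Two consequences I will use: (i) if $v_0\in\text{Int}\,C_M(p_0)$ and $\Phi$ is a chart at $p_0$, there are $\rho,\delta>0$ with $w\in\text{Int}\,C_M^\Phi(\Phi^{-1}(\zeta))$ whenever $|\zeta-\Phi(p_0)|<\rho$ and $|w-v_0|<\delta$; and (ii), by a compactness argument, for any compact $K\subset\text{Int}\,C_M^\Phi(p_0)$ there is a neighbourhood of $p_0$ on which every cone contains $K$.

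Next I would fix a strictly conal curve $\gamma:[0,1]\to M$ with $\gamma(0)=x$, $\gamma(1)=y$, smooth on each interval of a suitable partition. Since $\ll_M$ is transitive it suffices to chain short connections along $\gamma$: cover the compact set $\gamma([0,1])$ by finitely many chart domains and pick $0=t_0<t_1<\dots<t_N=1$ so that each $\gamma([t_{i-1},t_i])$ lies in one chart $\Phi_i$ with $\gamma$ smooth there. The plan is to produce, for each $i$, a radius $\delta_i>0$ such that the straight chart–segment $s\mapsto\Phi_i^{-1}\bigl((1-s)\Phi_i(p)+s\Phi_i(q)\bigr)$, $s\in[0,1]$, is a \emph{strictly} conal curve for all $p\in B(\gamma(t_{i-1}),\delta_i)$ and $q\in B(\gamma(t_i),\delta_i)$. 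Granting this, if $x'\in B(x,\delta_1)$ and $y'\in B(y,\delta_N)$ then $x'\ll_M\gamma(t_1)\ll_M\dots\ll_M\gamma(t_{N-1})\ll_M y'$, hence $x'\ll_M y'$, and $U:=B(x,\delta_1)$, $V:=B(y,\delta_N)$ do the job.

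For the local claim on the $i$-th piece, note that the segment's velocity is the constant vector $\Phi_i(q)-\Phi_i(p)$, which for $p,q$ near $\gamma(t_{i-1}),\gamma(t_i)$ is close to
\[
\Phi_i(\gamma(t_i))-\Phi_i(\gamma(t_{i-1}))=\int_{t_{i-1}}^{t_i} d\Phi_i(\gamma(s))\,\gamma'(s)\,ds,
\]
a positive multiple of the \emph{average} of the vectors $d\Phi_i(\gamma(s))\gamma'(s)\in\text{Int}\,C_M^{\Phi_i}(\gamma(s))$. If the partition is fine enough, all these open cones and base points are uniformly close to $C_M^{\Phi_i}(\gamma(t_{i-1}))$ and to $\gamma(t_{i-1})$, so by (i)--(ii) this average, and hence the slightly perturbed segment velocity, lies in $\text{Int}\,C_M^{\Phi_i}(z)$ for every $z$ in a neighbourhood of the segment; this gives the claim with $\delta_i$ controlled by the mesh.

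The real obstacle, I expect, is the last piece, near $t=1$. Definition \ref{conal curve} imposes no condition on $\gamma'(1)$, which may lie on $\partial C_M(y)$; then $\mathrm{dist}\bigl(\gamma'(s),\partial C_M(\gamma(s))\bigr)\to0$ as $s\to1$, and the ``average stays interior'' step above fails on the terminal segment however fine the partition is. My plan to circumvent this is to first replace the tail of $\gamma$ by a strictly conal curve reaching $y$ with \emph{interior} terminal velocity: pick $w_0\in\text{Int}\,C_M(y)$, use the positive margin of $\gamma'$ on an initial subinterval of $[t_{N-1},1]$ together with the openness of $\mathcal{O}$ to steer a strictly conal curve from $\gamma(t_{N-1})$ to a point of the form $\Phi_N^{-1}\bigl(\Phi_N(y)-\lambda\,d\Phi_N(y)w_0\bigr)$ with $\lambda>0$ small, and then append the (robustly strictly conal) chart–segment from that point to $y$ in the direction $w_0$. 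After this modification $\gamma'(1)\in\text{Int}\,C_M(y)$, the margin of $\gamma'$ stays bounded below near $t=1$, and the fine–partition/chart–segment argument applies on every piece. As a sanity check, for a constant cone field $C$ on flat space the whole scheme trivialises: $y-x=\int_0^1\gamma'(s)\,ds$ is an integral average of vectors in the open cone $\text{Int}\,C$, hence lies in $\text{Int}\,C$, so a single straight segment from any $x'$ near $x$ to any $y'$ near $y$ is strictly conal.
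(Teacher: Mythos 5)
Most of your scheme is sound, but it does considerably more than is needed: since $\ll_M$ is transitive and the original curve already joins $\gamma(t_1)$ to $\gamma(t_{N-1})$ strictly conally, only the behaviour at the two extreme pieces matters, and that is exactly how the paper argues. Its proof is purely local at $y$ (and, symmetrically, at $x$): it fixes a point $p=\gamma_1(t_1-s_0)$ on the curve just before $y$, shows in a chart that the straight segments from $p$ to every point of a small neighbourhood of $y$ have velocity in the nearby cones (using continuity of the cone field exactly as in your openness claim for $\{(q,v):v\in\text{Int}\,C_M(q)\}$), and then invokes $x\ll_M p$. Your openness-of-$\mathcal{O}$ argument via dual functionals, the fine-partition/averaging step, and the flat-space sanity check are all fine.

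The genuine gap is your treatment of the terminal endpoint, which you yourself identify as "the real obstacle''. The repair you propose --- "steer a strictly conal curve from $\gamma(t_{N-1})$ to $\Phi_N^{-1}\bigl(\Phi_N(y)-\lambda\,d\Phi_N(y)w_0\bigr)$'' --- is asserted, not proved, and it is precisely the hard point. To reach a point displaced from $y$ by $-\lambda w_0$ with $w_0\in\text{Int}\,C_M(y)$ you must aim, from some $\gamma(s)$, along $\bigl[\Phi_N(y)-\Phi_N(\gamma(s))\bigr]-\lambda\,d\Phi_N(y)w_0$; subtracting $\lambda w_0$ moves \emph{against} the cone, so you need the vector $\Phi_N(y)-\Phi_N(\gamma(s))=\int_s^1(\Phi_N\circ\gamma)'$ to lie in the interior of the cones along the new curve with margin at least a fixed multiple of $\lambda$. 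When $\gamma'(\sigma)$ approaches $\partial C_M(\gamma(\sigma))$ as $\sigma\to1$ and the cones vary with the base point, this integral need not even lie in those cones, let alone with a uniform margin; the same obstruction defeats the variant in which one shifts a tail of $\gamma$ by $-\mu(\sigma)w_0$. What is really needed here is a push-up type statement, and for merely continuous cone fields such statements are delicate and can fail (see the remark following Proposition \ref{transitive} in the Appendix). Note that the paper's proof does not meet this difficulty: it reverses the curve and takes its initial velocity to lie in $\text{Int}\{-C_M(y)\}$, i.e.\ it reads "strictly conal'' in Definition \ref{conal curve} as having interior velocity up to and including the terminal endpoint (the standard convention for timelike curves). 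Under that reading your obstacle disappears and your argument (indeed, just its last piece) closes; under the literal reading, your final steering step is unproven, and it is the crux rather than a routine detail.
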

\begin{proof}
We only prove the existence of $V$, i.e., if $x\ll_M y$, there exists a neighborhood $V$ of $y$ such that $x\ll_M z$ for each $z\in V$. Since $x\ll_M y$, there is a strictly conal curve $\gamma_1 : [t_0,t_1]\subset \mathbb{R} \to M$ such that $\gamma_1(t_0)=x$ and $\gamma_1(t_1)=y$. Let $\gamma(t)=\gamma_1(t_1-t)$, $t\in [0,t_1-t_0]$. Then $\gamma(0)=y$, $\gamma(t_1-t_0)=x$ and $\gamma'(0)\in \text{Int} \{-C_{M}(\gamma(0))\}$.
	
	Let $\Phi : U \to \Phi(U) \subset \mathbb{R}^n$ be a chart with $\Phi(\gamma(0))=0$ such that $\Phi(U)$ is convex. We set $-\widetilde{C}(\Phi(z))=d\Phi(z)(-C_M(z))$ for $z\in U$ and $\alpha=\Phi \circ \gamma$. Then $\alpha'(0)\in \text{Int}\{-\widetilde{C}(0)\}$. So, we can choose a compact convex neighborhood $B$ of $\alpha'(0)$ in the interior of $-\widetilde{C}(0)$ and set $W=\mathbb{R}^+ B :=\{\lambda B : \lambda >0\} \subset -\widetilde{C}(0)$. Since the cone field is continuous, there is a neighborhood $V\subset M$ of $\gamma(0)$ such that $W \subset -\widetilde{C}(z)$ for all $z\in U'=\Phi(V)$ and $U'$ is convex.
	
	On the other hand, $\alpha'(0)=\lim\limits_{t\to 0^+}\frac{1}{t}\alpha (t)\in \text{Int}W$. Then there is a $s_0>0$ such that $\alpha((0,s_0])\subset \text{Int}W$ and $\alpha([0,s_0])\subset U'$. Let $B'$ be an open convex neighborhood of $\alpha(s_0)$ in $W\cap U'$.
	
	Let $B''=\{\alpha(s_0)-u : u\in B'\}\cap U'$, then $B''$ is an open neighborhood of $\alpha(0)$. For each $v\in B''$, we set $\alpha_v(t)=\alpha(s_0)+t(v-\alpha(s_0))$, $t\in [0,1]$. Then $\alpha_v(0)=\alpha(s_0)$, $\alpha_v(1)=v$ and $\alpha_{v}^{'}(t)=v-\alpha(s_0)\in -B' \subset \widetilde{C}(\alpha_v(t))$.
	
	Thus, there is a point $p\in \gamma_1$ and an open neighborhood $D$ of $y$ such that $p\ll_M z$ for all $z\in D$. Since $x\ll_M p$, we obtain $x\ll_M z$ for all $z\in D$.
\end{proof}

According to our standing assumption (H1) in the introduction, we now give the following crucial definitions:

\begin{defn}\label{globally orderable}
	A conal manifold is said to be \textit{globally orderable} if the order ``$\leq_M$" is a partial order which is locally order convex.
\end{defn}

The conal order ``$\leq_M$" is a \textit{partial} order relation if it is additionally antisymmetric (i.e., $x\leq_M y$ and $y\leq_M x$ implies $x=y$).
A partial order ``$\leq$" is \textit{locally order convex} if it has a basis of neighborhoods at each point that are order convex ($z, x\in U$ implies $\{y: z\leq y \leq z\}\subset U$). See \cite{Lawson89,Neeb91}.

\begin{rmk}
In the terminology of general relativity, each point $p$ in a globally orderable manifold $M$ is called a strong point (see Lawson \cite[Section 5]{Lawson89}), by which means that every neighborhood $U$ of $p$ contains a smaller neighborhood $V$ of $p$ such that every conal curve that begins in $V$ and leaves $U$ terminates outside of $V$ (see e.g., \cite[p.192]{Hawkingandellis73}, \cite[p.28]{Penrose72} or \cite[p.59]{Beemandehrlich81}).
\end{rmk}

\begin{rmk}
	The {\it globality of the conal order}, i.e., whether the conal order associated with a cone field can be extended to a partial order on the global manifold, is a central problem in \cite{Lawson89}. The equivalence between globality of the conal order in a homogeneous manifold and globality of the Lie wedge in the Lie group has been shown in \cite[Section 5]{Lawson89} (see also \cite[Theorem 1.6]{Neeb91} and \cite[Section 4.3]{HilgertNeeb93}).
	Globality of a conal order occurs naturally in various situations; for example,	the conal order induced by the affine-invariant cone field on the homogeneous space of positive definite matrices, as we mentioned above, is a partial order (see \cite[Theorem 2]{MostajeranSepulchre18orderingmatrices}).
	Besides, if the manifold is globally orderable, one can exclude the occurrence of the closed conal curves (see \cite[Section 5]{Lawson89}).
\end{rmk}

\begin{defn}\label{quasi-closed}
	The order ``$\leq_M$" is \textit{quasi-closed} if $x\leq_M y$ whenever $x_n\to x$ and $y_n\to y$ as $n\to \infty$ and $x_n\ll_M y_n$ for all $n$.
\end{defn}

\begin{rmk}
	In general, the conal order is not a closed order ($x\leq_M y$ whenever $x_n\to x$ and $y_n\to y$ as $n\to \infty$ and $x_n\leq_M y_n$), see \cite[p.299]{Lawson89} and \cite[p.470]{Neeb91}. The quasi-closed order relationship here is inspired by the causal continuity of space-times studied by Hawking and Sachs in \cite{Hawkingandsachs74}.
%	Roughly speaking, we can prove that in some causally continuous space-time, the conal order is quasi-closed, see Theorem \ref{quasiclosed} in Appendix.
	One may refer to Appendix A.1 for more details.
\end{rmk}

\vskip 3mm

Now, we consider a system $\Sigma$ generated by a smooth vector field $f$ on $M$ equipped with a continuous solid cone field $C_M$. The induced flow by system $\Sigma$ is denoted by $\varphi_t$.
We write $d\varphi_t(x)$ as the tangent map from $T_{x}M$ to $T_{\varphi_{t}(x)}M$. Let $x\in M$, the \textit{positive semiorbit} (resp., \textit{negative semiorbit}) of $x$, be denoted by $O^+(x)=\{\varphi_t(x):t\geq 0\}$ (resp.,  $O^-(x)=\{\varphi_t(x):t\leq 0\}$). The \textit{full orbit} of $x$ is denoted by $O(x)=O^+(x)\cup O^-(x)$. An \textit{equilibrium} is a point $x$ such that $O(x)=\{x\}$. Let $E$ be the set of all the equilibria of $\varphi_t$. The $\omega$-limit set $\omega(x)$ of $x$ is defined by $\omega(x)=\cap_{s\geq 0}\overline{\cup_{t\geq s}\varphi_t(x)}$. A point $z\in \omega(x)$ if and only if there exists a sequence $\{t_i\}$, $t_i\to \infty$, such that $\varphi_{t_i}(x)\to z$ as $i\to \infty$. If $O^+(x)$ is precompact, then $\omega(x)$ is nonempty, compact, connected, and invariant (i.e., $\varphi_t(\omega(x))=\omega(x)$ for any $t\in  \mathbb{R}$).
%A point $x$ is called a \textit{quasiconvergent point} if $\omega(x)\subset E$. The set of all such points is denoted by $Q$.
A point $x$ is called a \textit{convergent point} if $\omega(x)$ consists of a single point of $E$. The set of all convergent points is denoted by $C$.

Throughout the paper, we always assume that all orbits are forward complete, which means $\varphi_t(x)$ is well defined for all $t\geq 0$, and the orbit of $x$ has compact closure for each $x\in M$.

%We consider the continuous-time system
%\begin{equation*}
%\frac{dx}{dt} = f(x), \ \ x\in M,
%\eqno(\Sigma)
%\end{equation*}

\begin{defn}\label{differential positive}
The system $\Sigma$ is said to be \textit{differentially positive} (DP) with respect to $C_{M}$ if
\begin{equation*}
d\varphi_t(x)C_{M}(x)\subseteq C_{M}(\varphi_t(x)), \ \ \forall x\in M, \ \ \forall t\geq 0.
\end{equation*}
And the differentially positive system $\Sigma$ is said to be \textit{strongly differentially positive} (SDP) with respect to $C_{M}$ if
\begin{equation*}
d\varphi_t(x)\{ C_{M}(x)\backslash \{ 0\} \} \subseteq \text{Int} C_{M}(\varphi_t(x)), \ \ \forall x\in M, \ \ \forall t>0.
\end{equation*}
See Fig.\ref{figb}.
\end{defn}
In the present paper, we focus on the strongly differentially positive system $\Sigma$ on a Riemannian manifold $M$.
We first give the following two useful preliminary results.

\begin{prop}\label{monotone property}
If $x\leq_M y$, then $\varphi_t(x)\leq_M \varphi_t(y)$ for $t\geq0$. Moreover, $\varphi_t(x)\ll_M \varphi_t(y)$ for all $t>0$.
\end{prop}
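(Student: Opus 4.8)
The plan is to transport a conal curve witnessing $x\leq_M y$ forward under the flow $\varphi_t$ and to check, using differential positivity, that its image is again a conal curve, and even a strictly conal one once $t>0$. Concretely, since $x\leq_M y$ there is, by definition of the conal order, a conal curve $\alpha:[t_0,t_1]\to M$ with $\alpha(t_0)=x$, $\alpha(t_1)=y$ and $\alpha'(s)\in C_M(\alpha(s))$ for $t_0\le s<t_1$ (the derivative being the right-hand one at the finitely many corners). Fix $t\ge 0$. By forward completeness $\varphi_t$ is a well-defined smooth self-map of $M$, so $\beta:=\varphi_t\circ\alpha$ is again continuous and piecewise smooth, it joins $\varphi_t(x)$ to $\varphi_t(y)$, and the chain rule gives $\beta'(s)=d\varphi_t(\alpha(s))\,\alpha'(s)$ at every point of differentiability of $\alpha$ (and, with the same identity, for the right-hand derivatives at the corners).

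First I would settle the weak statement. Since $\alpha'(s)\in C_M(\alpha(s))$ and $\Sigma$ is differentially positive, $\beta'(s)=d\varphi_t(\alpha(s))\alpha'(s)\in C_M(\varphi_t(\alpha(s)))=C_M(\beta(s))$; hence $\beta$ is a conal curve from $\varphi_t(x)$ to $\varphi_t(y)$, that is, $\varphi_t(x)\leq_M\varphi_t(y)$. For the ``Moreover'' part, fix $t>0$ and assume (as we may) that $x\neq y$, so that $\alpha$ is non-constant. I now want $\beta$ to be a \emph{strictly} conal curve, which requires $\beta'(s)\in\text{Int}\,C_M(\beta(s))$; since the interior of a cone excludes $0$, this can succeed only if $\alpha'(s)\neq 0$. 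So the preliminary step is to replace $\alpha$ by a conal curve with nowhere-vanishing velocity joining the same endpoints: discard any sub-arc on which $\alpha$ is constant and reparametrize the remainder by arc length, so that $|\alpha'(s)|\equiv 1$, and in particular $\alpha'(s)\in C_M(\alpha(s))\setminus\{0\}$ (likewise for the one-sided derivatives at the corners). Then strong differential positivity, applied with $t>0$, yields $\beta'(s)=d\varphi_t(\alpha(s))\alpha'(s)\in\text{Int}\,C_M(\varphi_t(\alpha(s)))=\text{Int}\,C_M(\beta(s))$ for all $s$, so $\beta$ is strictly conal and $\varphi_t(x)\ll_M\varphi_t(y)$.

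I do not expect anything deep in this proposition: at bottom it is the chain rule together with the defining inclusions of (S)DP, and notably the argument uses neither continuity of the cone field nor invertibility of $d\varphi_t$. The only point requiring genuine care is the reparametrization in the last step — ensuring the connecting conal curve can be chosen with velocity everywhere in $C_M\setminus\{0\}$, and that the piecewise-smooth structure and the right-hand-derivative convention at the corners survive composition with $\varphi_t$.
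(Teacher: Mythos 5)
Your proof is correct and follows essentially the same route as the paper: push the connecting conal curve forward by $\varphi_t$, use the chain rule to get $\beta'(s)=d\varphi_t(\alpha(s))\alpha'(s)$, and invoke (S)DP on the tangent vectors. You are in fact slightly more careful than the paper, which simply asserts $\frac{d}{ds}\gamma(s)\in C_M(\gamma(s))\setminus\{0\}$ without addressing possibly vanishing velocity; your reparametrization remark fills that small gap (with the minor caveat that even after discarding constant sub-arcs the velocity can still vanish at isolated points, where an arc-length reparametrization need not stay piecewise smooth --- a technicality the paper also passes over).
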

\begin{proof}
	If $x\leq_M y$, there exists a conal curve $\gamma(s)$ such that $\gamma(0)=x$ and $\gamma(1)=y$. Since $\Sigma$ is SDP and $\frac{d}{ds}\gamma(s)\in C_{M}(\gamma(s))\backslash \{ 0\}$, then $\frac{d}{ds}\varphi_t(\gamma(s))=d\varphi_t(\gamma(s))\frac{d}{ds}\gamma(s)\in \text{Int} C_{M}(\varphi_t(\gamma(s)))$ for $t>0$. So, $\varphi_t(\gamma(s))$ is a strictly conal curve.
\end{proof}

\begin{prop}\label{strong flow open relation}
%	Assume that system $\Sigma$ is SDP.
	If $x\leq_M y$, then for $t_0>0$, there exist neighborhoods $U$ of $x$, $V$ of $y$ such that $\varphi_{t}(U)\ll_M \varphi_{t}(V)$ for any $t\geq t_0$.
\end{prop}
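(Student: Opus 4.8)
The plan is to reduce the statement to the already-proved openness of the relation ``$\ll_M$'' (Proposition \ref{strong order open property}) by first flowing for time $t_0$, and then to propagate the resulting strict relation forward for all $t\ge t_0$ by monotonicity (Proposition \ref{monotone property}).

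First I would apply Proposition \ref{monotone property}: since $x\leq_M y$ and $t_0>0$, we get $\varphi_{t_0}(x)\ll_M\varphi_{t_0}(y)$. Now invoke Proposition \ref{strong order open property} at this pair: there are open neighborhoods $\widetilde U$ of $\varphi_{t_0}(x)$ and $\widetilde V$ of $\varphi_{t_0}(y)$ with $\widetilde U\ll_M\widetilde V$. Because all orbits are forward complete, $\varphi_{t_0}$ is a diffeomorphism of $M$ onto the open set $\varphi_{t_0}(M)$, which contains both $\varphi_{t_0}(x)$ and $\varphi_{t_0}(y)$; shrinking $\widetilde U,\widetilde V$ if necessary we may assume $\widetilde U,\widetilde V\subset\varphi_{t_0}(M)$. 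Set $U:=\varphi_{t_0}^{-1}(\widetilde U)$ and $V:=\varphi_{t_0}^{-1}(\widetilde V)$; these are open neighborhoods of $x$ and $y$, and $\varphi_{t_0}(U)=\widetilde U\ll_M\widetilde V=\varphi_{t_0}(V)$.

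Finally, fix $t\ge t_0$ and write $t=t_0+s$ with $s\ge 0$. Given $p\in U$ and $q\in V$ we have $\varphi_{t_0}(p)\ll_M\varphi_{t_0}(q)$, hence in particular $\varphi_{t_0}(p)\leq_M\varphi_{t_0}(q)$, since a strictly conal curve is in particular a conal curve. If $s>0$, Proposition \ref{monotone property} applied to $\varphi_{t_0}(p)\leq_M\varphi_{t_0}(q)$ yields $\varphi_t(p)=\varphi_s(\varphi_{t_0}(p))\ll_M\varphi_s(\varphi_{t_0}(q))=\varphi_t(q)$; and if $s=0$ this is the relation from the previous paragraph. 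Thus $\varphi_t(U)\ll_M\varphi_t(V)$ for every $t\ge t_0$, which is the claim.

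This is essentially a two-step reduction, so I do not expect a genuine obstacle; the only points that deserve a word of care are that $\varphi_{t_0}$ is a homeomorphism onto an open subset of $M$ (so that preimages of neighborhoods are neighborhoods), and the elementary remark that $p\ll_M q$ implies $p\leq_M q$, which is what allows Proposition \ref{monotone property} to be reapplied on the interval $[t_0,t]$.
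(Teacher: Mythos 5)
Your proposal is correct and follows essentially the same route as the paper: apply Proposition \ref{monotone property} to get $\varphi_{t_0}(x)\ll_M\varphi_{t_0}(y)$, invoke the openness of ``$\ll_M$'' (Proposition \ref{strong order open property}) at that pair, and pull the neighborhoods back by continuity of $\varphi_{t_0}$. The only cosmetic difference is that the paper simply chooses $U,V$ with $\varphi_{t_0}(U)\subset\bar U$ and $\varphi_{t_0}(V)\subset\bar V$ by continuity rather than taking exact preimages, and it leaves the forward propagation for $t>t_0$ implicit, which you spell out correctly.
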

\begin{proof}
	Since $x\leq_M y$, $\varphi_{t_0}(x)\ll_M \varphi_{t_0}(y)$ for $t_0>0$. One can take neighborhoods $\bar{U}$ of $\varphi_{t_0}(x)$ and $\bar{V}$ of $\varphi_{t_0}(y)$ such that $\bar{U}\ll_M\bar{V}$ by Proposition \ref{strong order open property}. By the continuity of $\varphi_{t_0}$, there are neighborhoods $U$ of $x$, $V$ of $y$ such that $\varphi_{t_0}(U)\subset \bar{U}$ and $\varphi_{t_0}(V)\subset \bar{V}$.
\end{proof}

\begin{figure}[!h]
	\centering
	\subfigure[\scriptsize  A conal curve $\gamma$ on manifold $M$.]{
		\includegraphics[width=7.3cm]{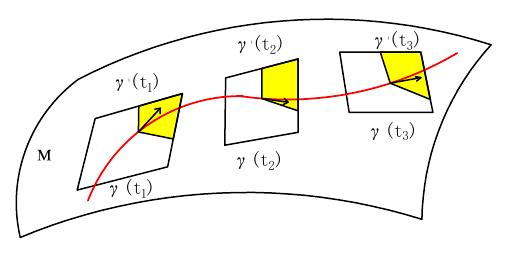}\label{figa}
		%\caption{fig1}
	}
	\quad
	\subfigure[\scriptsize Strongly differentially positive system on manifold $M$]{
		\includegraphics[width=7.3cm]{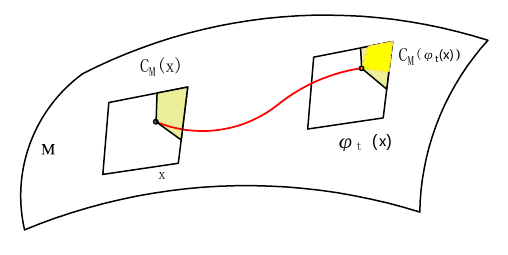}\label{figb}
	}
	\center{{\bf Fig 1}: Conal curves and strongly differentially positive flows on manifold $M$.}
\end{figure}

 As mentioned in the introduction, we hereafter impose the following hypotheses:
\begin{enumerate}[{\bf (H1)}]
	\item $M$ is a globally orderable conal manifold equipped with a continuous solid cone field $C_M$.
	\item The conal order ``$\leq_M$" is quasi-closed.
	\item Both the cone field $C_M$ and the Riemannian metric on $M$ are $\Gamma$-invariant.
\end{enumerate}

Before ending this section, we give the following critical lemma, which turns out to be important for the proof of our main results in the forthcoming sections.

\begin{lem}\label{fundamentalresults}
 Assume that {\rm (H1)-(H2)} hold. Then
	\begin{enumerate}[{\rm (a)}]
%		\item If the orbit of $x$ contains two ordered points, then $x\in C$;
		
		\item The $\omega$-limit set cannot contain two points $x$ and $y$ with $x\leq_M y$;
		
		\item If $x\leq_M y$, then either $\omega(x)\ll_M \omega(y)$, or $\omega(x)=\omega(y)\subset E$.
	\end{enumerate}
\end{lem}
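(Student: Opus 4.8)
\emph{Overall strategy.} I would prove (a) by contradiction and then deduce (b) from it. The point of (a) is that a comparable pair inside an $\omega$-limit set, pushed forward by the flow and combined with recurrence, produces a closed strictly conal curve; this cannot exist on a globally orderable manifold. Throughout I use that $\omega$-limit sets of precompact orbits are compact and $\varphi$-invariant, that $\varphi_t$ is injective, and that (H1) excludes closed conal curves.

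\emph{Part (a).} Suppose $x,y\in\omega(z)$ with $x\le_M y$ and $x\ne y$. Since $\omega(z)$ is invariant, $\varphi_1(x),\varphi_1(y)\in\omega(z)$, and by Proposition~\ref{monotone property} $\varphi_1(x)\ll_M\varphi_1(y)$; so from the start we may assume $x\ll_M y$. By Proposition~\ref{strong order open property} pick open sets $U\ni x$, $V\ni y$ with $U\ll_M V$. As $x,y\in\omega(z)$, the forward orbit of $z$ meets both $U$ and $V$ at arbitrarily large times; pick $t_0$ with $\varphi_{t_0}(z)\in U$ and then $\tau>0$ with $\varphi_{t_0+\tau}(z)\in V$. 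Writing $u:=\varphi_{t_0}(z)$ this gives $u\ll_M\varphi_\tau(u)$. Applying Proposition~\ref{monotone property} repeatedly and using transitivity of $\ll_M$, one gets $\varphi_{k\tau}(u)\ll_M\varphi_{m\tau}(u)$ for all $0\le k<m$, so $\{\varphi_{k\tau}(u)\}_{k\ge0}$ is a strictly $\ll_M$-increasing sequence in the compact set $\overline{O^+(z)}$.

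Now put $g:=\varphi_\tau$, let $\Omega$ be the $\omega$-limit set of $u$ under the homeomorphism $g$ (nonempty, compact, $g$-invariant). For $q\in\Omega$ with $g^{k_j}(u)\to q$ one has $g^{k_j+1}(u)=g(g^{k_j}(u))\to g(q)$ and $g^{k_j}(u)\ll_M g^{k_j+1}(u)$, so quasi-closedness (H2) gives $q\le_M g(q)$. Choose, via a minimal subset of $\Omega$ for $g$, a $g$-recurrent point $p_0\in\Omega$, say $g^{n_j}(p_0)\to p_0$ with $n_j\to\infty$. If $p_0\ne g(p_0)$, set $\tilde p:=\varphi_\epsilon(p_0)$ with $\epsilon>0$ small: then $g^{n_j}(\tilde p)=\varphi_\epsilon(g^{n_j}(p_0))\to\tilde p$, so $\tilde p$ is again $g$-recurrent, and Proposition~\ref{monotone property} applied to $p_0\le_M g(p_0)$ with $p_0\ne g(p_0)$ yields $\tilde p\ll_M g(\tilde p)$. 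Iterating, $\tilde p\ll_M g^n(\tilde p)$ for all $n\ge1$, hence $\tilde p\le_M g^{n_j}(\tilde p)$; on the other hand, applying $g^{n_j}$ to $\tilde p\ll_M g^{n_{j'}-n_j}(\tilde p)$ (for $j'>j$) gives $g^{n_j}(\tilde p)\ll_M g^{n_{j'}}(\tilde p)$, and letting $j'\to\infty$ quasi-closedness gives $g^{n_j}(\tilde p)\le_M\tilde p$. Antisymmetry of $\le_M$ (part of (H1)) forces $g^{n_j}(\tilde p)=\tilde p$, and then $\tilde p\ll_M g(\tilde p)\ll_M\cdots\ll_M g^{n_j}(\tilde p)=\tilde p$, so $\tilde p\ll_M\tilde p$ by transitivity of $\ll_M$ --- a closed strictly conal curve, impossible on a globally orderable manifold; contradiction. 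The remaining possibility is that every $g$-recurrent point produced this way is a $g$-fixed point, i.e.\ $\{g^k(u)\}$ accumulates only on $g$-periodic points; I expect this degenerate case --- where one instead exploits that the strictly increasing chain must ``wind around'' the resulting periodic set lying in $\omega(z)$ to again force some $\tilde p\ll_M\tilde p$ --- to be the main technical obstacle. The rest of (a) is routine once the reduction to $u\ll_M\varphi_\tau(u)$ and the recurrence argument are set up.

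\emph{Part (b).} Assume $x\le_M y$, $x\ne y$ (the case $x=y$ is read off directly), so $\varphi_t(x)\ll_M\varphi_t(y)$ for $t>0$ by Proposition~\ref{monotone property}. Consider the product semiflow $(\varphi_t,\varphi_t)$ on $M\times M$ and the $\omega$-limit set $\Lambda$ of $(x,y)$; by precompactness of $O^+(x)$ and $O^+(y)$, $\Lambda$ projects onto $\omega(x)$ and onto $\omega(y)$. For $(p,q)\in\Lambda$, taking $\varphi_{t_n}(x)\to p$, $\varphi_{t_n}(y)\to q$ and, for each fixed $s\ge0$, passing to a subsequence with $\varphi_{t_n-s}(x)\to\varphi_{-s}(p)$, $\varphi_{t_n-s}(y)\to\varphi_{-s}(q)$ (using invariance of the $\omega$-limit sets and injectivity of $\varphi_s$), quasi-closedness gives $\varphi_{-s}(p)\le_M\varphi_{-s}(q)$ for all $s\ge0$; applying $\varphi_s$ with $s>0$ via Proposition~\ref{monotone property} then yields, for every $(p,q)\in\Lambda$, that either $p=q$ or $p\ll_M q$. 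If $\omega(x)\cap\omega(y)=\emptyset$, no $(p,q)\in\Lambda$ has $p=q$, so $p\ll_M q$ throughout $\Lambda$; combining this with part (a) (which makes $\omega(x)$ and $\omega(y)$ internally unordered) and surjectivity of the projections, one upgrades it to $\omega(x)\ll_M\omega(y)$. If instead $r\in\omega(x)\cap\omega(y)$, the limit argument produces $p\in\omega(x)$, $q\in\omega(y)$ with $p\le_M r\le_M q$; since $p,r\in\omega(x)$ and $r,q\in\omega(y)$, part (a) forces $p=r=q$, and iterating this together with invariance of $\omega(x),\omega(y)$ and the absence of closed conal curves gives $\omega(x)=\omega(y)$ and that this common limit set consists of equilibria, i.e.\ $\omega(x)=\omega(y)\subseteq E$. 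Besides the degenerate sub-case of (a), the steps most in need of care are promoting ``$p\ll_M q$ on $\Lambda$'' to the full relation ``$\omega(x)\ll_M\omega(y)$'' and establishing the final ``$\subseteq E$''; but once (a) is available, (b) is essentially formal, and (a) is the real crux.
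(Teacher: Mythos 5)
There are genuine gaps in both parts, and in each case the gap sits exactly where the paper has to work hardest. In (a), your reduction to $u\ll_M\varphi_\tau(u)$ and the quasi-closedness/antisymmetry argument are fine (indeed the two-subsequential-limits argument shows the whole sequence $\varphi_{k\tau}(u)$ converges to a single $\varphi_\tau$-fixed point, which is cleaner than going through recurrent points), but you stop precisely at the case you call ``degenerate'': the limit is then only known to be a $\tau$-periodic point of the flow, and $\omega(z)$ is the corresponding periodic orbit, which does not yet contradict the existence of two distinct ordered points in $\omega(z)$. The paper closes this by Proposition \ref{convergence criterion}: since $\ll_M$ is open, $u\ll_M\varphi_\tau(u)$ holds for a whole interval of times $\tau\in(T-\epsilon_0,T+\epsilon_0)$, so the limiting orbit is $\tau$-periodic for all such $\tau$; its set of periods then contains $[0,\epsilon_0]$, forcing it to be a single equilibrium, whence $\omega(z)$ is a singleton. (One can alternatively finish your route with a rational/irrational rotation argument on the limiting periodic orbit, but some such argument must be supplied; you only gesture at it.)

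The more serious gap is in (b). Your product-flow argument yields only the ``existential'' statement: every $a\in\omega(x)$ is $\ll_M$-below \emph{some} $b\in\omega(y)$ and dually. Upgrading this to $\omega(x)\ll_M\omega(y)$ (every pair ordered) is not formal and does not follow from part (a) plus surjectivity of the projections: unorderedness of each limit set gives no control over cross pairs outside $\Lambda$. This upgrade is the content of the paper's separation results (Lemma \ref{separation lemma fixed point case}, Lemma \ref{separation lemma periodic case}, Proposition \ref{separation lemma}), whose proofs exploit that an equilibrium or periodic point stays comparable to the flowed points, and reduce the general case to the periodic one via Pugh's Closing Lemma together with Lemma \ref{perturbation lemma}, which guarantees the perturbed flow is still SDP. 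Your treatment of the intersecting case has the same problem: ruling out $\omega(x)\neq\omega(y)$ when the limit sets meet again invokes Proposition \ref{separation lemma} in the paper, and ``iterating with invariance and absence of closed conal curves'' is not a substitute. So the assessment that ``(b) is essentially formal once (a) is available'' is incorrect; the separation lemma (with its Closing-Lemma machinery) is the real crux of Lemma \ref{fundamentalresults}(b), and nothing in the proposal replaces it.
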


For the sake of the completeness, we will postpone its proof in Section 4.

\section{Proof of Theorem \ref{genericconvergence}}

In this section, we will focus on the generic behavior of the SDP flow $\varphi_t$ under the hypotheses (H1)-(H3).

\begin{thm}\label{convergent thm}
	{\rm (Generic Convergence)} Assume that {\rm(H1)-(H3)} hold.
	If the cone field admits a $C^{\infty}$-section and $\varphi_t$ satisfies the compactness condition {\rm (P)}, then ${\rm Int} C$ is dense in $M$.
\end{thm}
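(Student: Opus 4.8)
The goal is to show that under (H1)--(H3), plus the existence of a $C^\infty$-section of the cone field and the compactness condition (P), the interior of the convergent set $C$ is dense in $M$. The strategy is the classical Hirsch-type argument, carried out on the conal manifold: exploit the fact that the SDP flow is strongly monotone for the conal order $\le_M$, use the Perron--Frobenius vector field $\mathcal{W}$ to build, near any point, a pair of strongly ordered points whose orbits sandwich a neighborhood, and then show that the "gap" between the two limit sets forces convergence on an open dense set. The $\Gamma$-invariance is what lets one transport cones and the metric between tangent spaces so that the arguments localize uniformly.

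First I would fix an arbitrary open set $O\subset M$ and aim to produce a nonempty open subset of $O$ contained in $C$. Using the $C^\infty$-section $\mathcal{W}$ of the cone field (the Perron--Frobenius vector field, whose values lie in $\mathrm{Int}\,C_M(x)$), I would flow briefly along $\pm\mathcal{W}$ from a center point $p\in O$ to obtain points $a\ll_M p \ll_M b$ with $a,b$ still in $O$; by Proposition \ref{strong order open property} the relation $\ll_M$ is open, so one gets neighborhoods with $U_a\ll_M U_p\ll_M U_b$. By Proposition \ref{monotone property} and Proposition \ref{strong flow open relation}, for all $t$ large the images $\varphi_t(U_a)\ll_M\varphi_t(U_b)$, and monotonicity squeezes $\varphi_t(U_p)$ between them. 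The compactness condition (P) guarantees precompactness of semiorbits so that $\omega(a),\omega(p),\omega(b)$ are all nonempty compact connected invariant sets, with $\omega(a)\le_M\omega(p)\le_M\omega(b)$.

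Next comes the dichotomy engine: Lemma \ref{fundamentalresults}(b) applied to $a\le_M b$ tells us that either $\omega(a)\ll_M\omega(b)$, or $\omega(a)=\omega(b)\subset E$. In the second case $\omega(p)$ is trapped between two equal limit sets consisting of equilibria, and together with Lemma \ref{fundamentalresults}(a) (no limit set contains two $\le_M$-ordered points) one concludes $\omega(p)$ is a singleton in $E$, and the same holds for a whole neighborhood of $p$ by the open-ness of the ordering and continuity — giving an open subset of $O$ inside $C$. In the first case, $\omega(a)\ll_M\omega(b)$, one has genuine "room": I would then run a quantitative monotone-convergence / continuity argument (this is where $\Gamma$-invariance of the metric is essential, to compare distances in different tangent spaces and to get uniform estimates) to show that the set of $q$ near $p$ for which $\omega(q)$ is a singleton is dense; the non-convergent points, if any, form a set that cannot contain an order interval, and one iterates the construction on the complement. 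A Baire-category / "$C$ is residual" bootstrap then upgrades "dense set of singletons" to "$\mathrm{Int}\,C$ is dense."

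**The main obstacle.** The delicate step is the first case $\omega(a)\ll_M\omega(b)$: on a general Riemannian manifold, with the conal order only quasi-closed rather than closed, one must rule out non-trivial recurrent behavior of $\omega(p)$ sitting strictly between $\omega(a)$ and $\omega(b)$, and show that "most" nearby initial conditions are actually convergent. The classical proof on $\mathbb{R}^n$ uses the linear order structure, monotone convergence of order intervals, and compactness of order intervals — none of which is automatic here. I expect the real work to be in exploiting $\Gamma$-invariance to set up a uniform local model in which the monotone-flow estimates of Hirsch's theorem can be reproduced, and in using (H2) (quasi-closedness) precisely where the lack of a closed order would otherwise break the limiting arguments. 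Verifying that the "bad" set is nowhere dense — equivalently that convergent points with singleton, equilibrium $\omega$-limit form an open dense set — is the crux, and is presumably where the auxiliary lemmas deferred to Section 4 do the heavy lifting.
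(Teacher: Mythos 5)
There is a genuine gap, and it sits exactly at the step you wave through. When you apply Lemma \ref{fundamentalresults}(b) to $a\leq_M b$ and land in the second alternative, you only get $\omega(a)=\omega(b)\subset E$, i.e.\ a compact, connected, \emph{unordered} set of equilibria --- a priori a whole continuum, not a point. Your conclusion that ``$\omega(p)$ is a singleton in $E$'' does not follow from Lemma \ref{fundamentalresults}(a)--(b); upgrading ``$\subset E$'' to ``a single equilibrium'' is the central difficulty of the whole theorem, and it is what Proposition \ref{improved limit set dichotomy} is for. The paper's mechanism is spectral: by Lemma \ref{PerronFrobenius}, if $\omega(x)\subset E$ is not a singleton then at each $e\in\omega(x)$ the Perron--Frobenius eigenvalue of $d\varphi_\tau(e)$ satisfies $\rho(e)>\rho>1$ uniformly (the proof that $\rho(e)>1$ itself uses Lemma \ref{fundamentalresults}(a) to forbid ordered pairs in $\omega(x)$); then, transporting tangent vectors with the $\Gamma$-invariant metric, the lengths of the conal curves $\gamma_n=\varphi_{n\tau}(\gamma)$ joining $\varphi_{n\tau}(x)$ to $\varphi_{n\tau}(y)$ satisfy $L(\gamma_{n+1})\geq l\,L(\gamma_n)$ with $l>1$, while the strong-point property of a globally orderable manifold forces $L(\gamma_n)\to 0$ --- a contradiction. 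You mention the Perron--Frobenius vector field but use it only as a section of the cone field to manufacture ordered points; that conflates two different objects (any $C^\infty$-section suffices for the latter), and the spectral content of the former, which is the actual engine, never appears in your argument.

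The other half of your plan --- the case $\omega(a)\ll_M\omega(b)$, handled by ``a quantitative monotone-convergence argument'' plus ``a Baire-category bootstrap'' --- is a placeholder rather than a proof, and it is not how the paper proceeds. The paper proves a trichotomy for sequences $x_n\leq_M x_{n+1}\leq_M x_0$ approximating $x_0$ from below (Lemma \ref{improved limit set trichotomy}): using the improved dichotomy and the compactness condition (P), the sets $\omega(x_n)$ collapse to a single equilibrium $p$, and either $x_0$ converges (cases (1)--(2)) or $\omega(x_n)=p\ll_M\omega(x_0)$ for all $n$ (case (3)). The theorem then follows by taking a non-convergent $x_0$, approximating it by non-convergent $x_n$, noting case (3) must hold for each $x_n$, and using openness of $\ll_M$ together with Proposition \ref{strong flow open relation} to produce, arbitrarily close to each $x_n$, an open set of points all converging to $p$. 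No residuality argument is invoked; the density of $\mathrm{Int}\,C$ is extracted directly. Your outline identifies the right ingredients (strong monotonicity, openness of $\ll_M$, the section, condition (P)) but is missing the two lemmas that actually carry the proof.
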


Recall that $C$ denotes the set of all convergent points.
Here, we say that a cone field admits a \textit{$C^{\infty}$-section} if there is a $C^{\infty}$ vector field $X$ such that $X(x)\in C_M(x)$ for all $x\in M$.
We also formulate the \textit{compactness condition {\rm (P)}}:
For each $x_0\in M$, $\cup_{n\geq 0}\omega(x_n)$ has compact closure contained in $M$, where $\{x_n\}\to x_0$ with $x_n\leq_M x_{n+1}\leq_M x_0$ (or $x_0\leq_M x_{n+1}\leq_M x_n$) for $n\geq 1$.

\begin{rmk}
We point out that the homogeneous cone field on a homogeneous space naturally admits $C^{\infty}$-sections (see \cite{Lawson89} or Proposition \ref{homogeneousconecontinuous} in the Appendix).
While the compactness condition {\rm (P)} is a relatively weak compactness requirement which is frequently satisfied (see e.g., \cite{S95,S17}).
\end{rmk}

The proof of Theorem \ref{convergent thm} will be divided into several steps. To proceed it, we recall the so-called \textit{Perron-Frobenius vector field}, which was first introduced by Forni and Sepulchre \cite{ForniandSepulchre16}. A continuous vector field $v$ on an invariant compact set $\Omega\subset M$ is called a \textit{Perron-Frobenius vector field} on $\Omega$,
if for each $x\in \Omega$, \vskip 1mm

(i) $v(x)\in \text{Int} C_M(x)$ with $\vert v(x)\vert_x=1$; \vskip 1mm

(ii) $v(\varphi_t(x))=\dfrac{d\varphi_t(x)v(x)}{\abs{d\varphi_t(x)v(x)}}$ for $t\geq 0$;\vskip 1mm

(iii) $\lim\limits_{t\to \infty}d_{\varphi_t(x)}(d\varphi_t(x)u,v(\varphi_t(x)))=0$ for all $u\in C_M(x)\backslash \{ 0\}$, \vskip 1mm

\noindent where $d_{\varphi_t(x)}(\cdot,\cdot)$ is the Hilbert Metric induced by $C_M(\varphi_t(x))$ (see \cite[Section VI]{ForniandSepulchre16}).

\begin{lem}\label{PerronFrobenius}
Let $\omega(x)\subset E$ and $v$ be the Perron-Frobenius vector field on $\omega(x)$. If $\omega(x)$ is not a singleton, then
there exist $\tau>0$ and $\rho(e)>1$ such that $$d\varphi_{\tau}(e)v(e)=\rho(e) v(e),\quad \textnormal{ for }e\in \omega(x).$$ Moreover, there exists $\rho>1$ such that $\rho(e)>\rho$ for all $e\in \omega(x)$.
\end{lem}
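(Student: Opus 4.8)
\noindent\emph{Proof outline.} The plan is to identify $\rho(e)$ with $e^{\tau\lambda(e)}$, where $\lambda(e)$ is the dominant Lyapunov exponent of the linearization of $\Sigma$ at the equilibrium $e$, and then to prove $\lambda(e)>0$ on $\omega(x)$ — this last step being exactly where the hypothesis that $\omega(x)$ is not a singleton is exploited, via Lemma~\ref{fundamentalresults}(a).

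First I would record the pointwise eigenvalue structure. Since $e\in\omega(x)\subset E$ is an equilibrium, $\varphi_\tau(e)=e$ and $d\varphi_\tau(e)=\exp(\tau Df(e))$ is linear; by property (ii) of the Perron--Frobenius vector field, $v(e)=d\varphi_\tau(e)v(e)/\abs{d\varphi_\tau(e)v(e)}_e$, so $v(e)$ is an eigenvector of $Df(e)$, with some eigenvalue $\lambda(e):=\tfrac1\tau\log\abs{d\varphi_\tau(e)v(e)}_e\in\RR$, and $d\varphi_\tau(e)v(e)=e^{\tau\lambda(e)}v(e)$ for every $\tau>0$. Because $\Sigma$ is SDP, $d\varphi_\tau(e)$ maps $C_M(e)\setminus\{0\}$ into $\mathrm{Int}\,C_M(e)$, so by the classical Perron--Frobenius theory for maps of a solid convex cone (contraction of the Hilbert metric, in accordance with property (iii)) the ray $\RR^+v(e)$ is its unique interior eigenray and $e^{\tau\lambda(e)}$ equals the spectral radius of $d\varphi_\tau(e)$, strictly dominating all other eigenvalues in modulus. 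Hence $\lambda(e)=\max\{\mathrm{Re}\,\mu:\mu\in\mathrm{spec}\,Df(e)\}$ is a simple, real, strictly dominant eigenvalue of $Df(e)$.

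Next I would show $\lambda(e)>0$ for every $e\in\omega(x)$. If $\lambda(e)<0$ for some $e$, then every eigenvalue of $Df(e)$ has negative real part, so $e$ is locally asymptotically stable; choosing $t_n\to\infty$ with $\varphi_{t_n}(x)\to e$, the tail of the orbit enters the basin of $e$ and $\omega(x)=\{e\}$, contradicting non-singletonness. If $\lambda(e)=0$ for some $e$, then strict dominance gives that $0$ is a simple eigenvalue of $Df(e)$ with all others having negative real part, so $e$ has a one-dimensional local center manifold $W$ tangent to $v(e)\in\mathrm{Int}\,C_M(e)$, toward which the complementary (contracting) directions are uniformly attracted; any equilibrium lying in a small enough neighborhood of $e$ is then an orbit staying near $e$ for all $t\ge 0$ and so must be contained in $W$. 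As a nondegenerate continuum, $\omega(x)$ has no isolated points, so there is an equilibrium $e'\in\omega(x)\cap W$, $e'\ne e$, arbitrarily close to $e$; by continuity of the cone field the sub-arc of $W$ joining $e$ and $e'$ has tangent vector everywhere in $\mathrm{Int}\,C_M(\cdot)$, hence, traversed in the appropriate direction, is a strictly conal curve, giving $e\ll_M e'$ or $e'\ll_M e$. This contradicts Lemma~\ref{fundamentalresults}(a), so $\lambda(e)=0$ is also impossible, and therefore $\lambda(e)>0$ on $\omega(x)$.

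Finally, fix any $\tau>0$ and set $\rho(e):=e^{\tau\lambda(e)}>1$, so that $d\varphi_\tau(e)v(e)=\rho(e)v(e)$. Since $v$ is continuous on $\omega(x)$, $\varphi_\tau$ is smooth and the metric is continuous, the map $e\mapsto\lambda(e)=\tfrac1\tau\log\abs{d\varphi_\tau(e)v(e)}_e$ is continuous on the compact set $\omega(x)$; hence $\lambda_0:=\min_{e\in\omega(x)}\lambda(e)>0$, and any $\rho\in(1,e^{\tau\lambda_0})$ satisfies $\rho(e)\ge e^{\tau\lambda_0}>\rho>1$ for all $e\in\omega(x)$. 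The main obstacle is the case $\lambda(e)=0$: this is precisely where non-singletonness of $\omega(x)$ is used, and it hinges on the \emph{strict} dominance of the Perron--Frobenius eigenvalue (which makes the center manifold genuinely one-dimensional and tangent to $v(e)$) together with the no-two-comparable-points obstruction of Lemma~\ref{fundamentalresults}(a); the case $\lambda(e)<0$ and the passage to a uniform lower bound are routine.
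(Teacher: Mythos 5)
Your argument is correct: the identification of $\rho(e)$ with the spectral radius of $d\varphi_\tau(e)$ via strong positivity, the exclusion of $\lambda(e)<0$ and $\lambda(e)=0$ using nearby equilibria of $\omega(x)$ together with Lemma~\ref{fundamentalresults}(a), and the uniform bound by continuity of $e\mapsto\abs{d\varphi_\tau(e)v(e)}_e$ on the compact set $\omega(x)$ all go through. The overall skeleton matches the paper's proof (Perron--Frobenius identification of $\rho(e)=\sigma(d\varphi_\tau(e))$, no isolated points of $\omega(x)$, contradiction with the non-ordering of $\omega$-limit sets), but your treatment of the critical cases is genuinely different. The paper never splits into $\lambda<0$ and $\lambda=0$ and uses no stability or invariant-manifold theory: working in a chart where $A=\Phi\circ\varphi_\tau\circ\Phi^{-1}$ fixes both $\tilde e$ and the nearby equilibria $\tilde e_n$, it expands $A(\tilde e)-A(\tilde e_n)$ to first order, so the normalized differences $w_n=(\tilde e-\tilde e_n)/\abs{\tilde e-\tilde e_n}$ subconverge to a unit vector $w$ with $dA(\tilde e)w=w$; this gives $\rho(e)=\sigma(dA(\tilde e))\geq 1$ outright (your basin-of-attraction case $\lambda(e)<0$ is thereby absorbed), and if $\rho(e)=1$ the simplicity and strict dominance of the Perron root force $w$ into $\pm\mathrm{Int}\,\widetilde C(\tilde e)$, so for large $n$ the straight segment joining $\tilde e_n$ and $\tilde e$ is a conal curve and $e_n\leq_M e$, contradicting Lemma~\ref{fundamentalresults}(a). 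Your center-manifold argument reaches the same geometric conclusion (nearby equilibria must align with the Perron direction $v(e)$) but invokes heavier machinery (local center/center-unstable manifold theory plus a separate asymptotic-stability case); what it buys is a cleaner dynamical interpretation via the dominant exponent $\lambda(e)$, while the paper's difference-quotient argument is more elementary, chart-level linear algebra that handles both cases at once. Your concluding uniform bound is also slightly more direct than the paper's appeal to continuity of the spectral radius, though both are fine.
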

\begin{proof}
Fix $\tau>0$. Since $e\in\omega(x)\subset E$, then $\varphi_t(e)=\varphi_{\tau}(e)=e$ for all $t\geq 0$. So, $d\varphi_{\tau}(e)v(e)=\rho(e) v(e)$ for some $\rho(e) >0$.

We then pick a smooth chart $\Phi : U \to W$, where $U\subset M$ is the coordinate neighborhood of $e$, $W$ is an open set in $\mathbb{R}^n$. Using this coordinate map $\Phi$, we write $\widetilde{e}=\Phi(e)$, $\widetilde{v}(\widetilde{e})=d\Phi(e)v(e)$ and $\widetilde{C}(\widetilde{e})=d\Phi(e)C_M(e)$. Let $A = \Phi \circ \varphi_{\tau} \circ \Phi^{-1}$. Then $A(\widetilde{e})=\widetilde{e}$ and $dA(\widetilde{e})=d\Phi(e) \circ d\varphi_{\tau}(e) \circ d\Phi^{-1}(\widetilde{e}) : T_{\widetilde{e}}\mathbb{R}^n \to T_{\widetilde{e}}\mathbb{R}^n$. So, $dA(\widetilde{e})\widetilde{v}(\widetilde{e})=d\Phi(e) \circ d\varphi_{\tau}(e) \circ d\Phi^{-1}(\widetilde{e})\widetilde{v}(\widetilde{e})=d\Phi(e) \circ d\varphi_{\tau}(e) v(e)=d\Phi(e) \rho(e) v(e)=\rho(e) d\Phi(e) v(e)=\rho(e) \widetilde{v}(\widetilde{e})$. Since system is SDP and $\widetilde{C}(\widetilde{e})=d\Phi(e)C_M(e)$, then $dA(\widetilde{e})(\widetilde{C}(\widetilde{e})\backslash \{ 0\}) \subset \text{Int} \widetilde{C}(\widetilde{e})$. Moreover, $\widetilde{v}(\widetilde{e})\in \text{Int}\widetilde{C}(\widetilde{e})$, since $v(e)\in \text{Int}C_M(e)$. By the Perron-Frobenius Theorem, we obtain that $\rho(e) = \sigma(dA(\widetilde{e}))$, where $\sigma(dA(\widetilde{e}))$ is the spectral radius of $dA(\widetilde{e})$.

Since $\omega(x)$ is not a singleton, we obtain that there exists a sequence $e_n\in \omega(x)\cap U$ such that $e_n\neq e$ and $e_n\to e$. Let $\widetilde{e}_n=\Phi(e_n)$. Then $\widetilde{e}-\widetilde{e}_n=A(\widetilde{e})-A(\widetilde{e}_n)=dA(\widetilde{e})(\widetilde{e}-\widetilde{e}_n)+o(\abs{\widetilde{e}-\widetilde{e}_n})$, where $o(\abs{ \widetilde{e}-\widetilde{e}_n}) / \abs{\widetilde{e}-\widetilde{e}_n} \to 0$ as $n\to \infty$. Let $w_n=\widetilde{e}-\widetilde{e}_n / \abs{\widetilde{e}-\widetilde{e}_n}$, then $w_n=dA(\widetilde{e})w_n+r_n$, where $r_n\to 0$ as $n\to \infty$. If $w_n \to w$ as $n\to \infty$, we obtain that $w=dA(\widetilde{e})w$. Hence, $\sigma(dA(\widetilde{e})) \geq 1$.

If $\sigma(dA(\widetilde{e})) =1$, then $w\in \text{Int} \widetilde{C}(\widetilde{e})$. Hence, there exists a neighborhood $\widetilde{N}$ of $w$ in the interior of $\widetilde{C}(\widetilde{e})$ such that there is a convex neighborhood $\widetilde{W}$ of $\widetilde{e}$ satisfies that for any $\widetilde{z}\in \widetilde{W}$, $\widetilde{N}\subset \widetilde{C}(\widetilde{z})$. Thus, there is a $N>0$ such that $\widetilde{e}_n\in \widetilde{W}$ and $w_n\in \widetilde{N}$ for all $n\geq N$. Let $\widetilde{\beta}(s)=s\widetilde{e}+(1-s)\widetilde{e}_n$ for $s\in [0,1]$, then $\frac{d}{ds}\widetilde{\beta}(s)=\widetilde{e}-\widetilde{e}_n\in \text{Int} \widetilde{C}(\widetilde{\beta}(s))$. Then, $\Phi^{-1}(\widetilde{\beta}(s))$ is a conal curve in $M$ connecting $e_n$ and $e$. Thus, $e_n\leq_M e$. Since $e_n,e\in \omega(x)$, it is a contradiction to Lemma \ref{fundamentalresults}(a). Thus, we obtain that $\rho(e) >1$. The conclusion of the lemma follows from the compactness of $\omega(x)$ and the continuity of the spectral radius (see e.g., \cite{Kato76,LemmensNussbaum}).
\end{proof}

%For the sake of convenience, we hereafter write $K$ as the omega-limit set $\omega(x)$ without any confusion.

\begin{prop}\label{improved limit set dichotomy}
%{\rm (Improved Limit Set Dichotomy)}
%Let $M$ be a globally orderable conal manifold equipped with a continuous solid $\Gamma$-invariant cone field $C_M$, and the Riemannian metric is $\Gamma$-invariant. Assume that the system $\Sigma$ is strongly differentially positive with respect to $C_{M}$.
If $x\leq_M y$, then either $\omega(x)\ll_M \omega(y)$, or $\omega(x)=\omega(y)=\{e\}$ for some $e\in E$.
\end{prop}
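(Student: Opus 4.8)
The plan is to build on the dichotomy already available. By Lemma~\ref{fundamentalresults}(b), since $x\le_M y$ we have either $\omega(x)\ll_M\omega(y)$, in which case there is nothing to prove, or $\omega(x)=\omega(y)=:\Omega\subset E$. Thus the whole task is to upgrade the second alternative, i.e.\ to prove that a common $\omega$-limit set contained in the equilibrium set $E$ is a single point. I argue by contradiction and assume $\Omega$ is not a singleton. We may assume $x\ne y$, so by Proposition~\ref{monotone property} the orbits are strictly ordered, $\varphi_t(x)\ll_M\varphi_t(y)$ for every $t>0$, realized by the strictly conal curves $\gamma_t:=\varphi_t\circ\gamma$, where $\gamma:[0,1]\to M$ is a fixed conal curve from $x$ to $y$.

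The first step is to collapse the two orbits. Given any sequence $t_i\to\infty$, precompactness of orbits lets us pass to a subsequence with $\varphi_{t_i}(x)\to a$ and $\varphi_{t_i}(y)\to b$, where $a,b\in\Omega$. From $\varphi_{t_i}(x)\ll_M\varphi_{t_i}(y)$ and the quasi-closedness (H2) we get $a\le_M b$; but $a,b\in\omega(x)$, so $a=b$ by Lemma~\ref{fundamentalresults}(a). Hence every subsequential limit of $(\varphi_t(x),\varphi_t(y))$ lies on the diagonal, i.e.\ $d(\varphi_t(x),\varphi_t(y))\to0$ and $\operatorname{dist}(\varphi_t(x),\Omega)\to0$ as $t\to\infty$. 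The second step localizes the connecting curves. Since $M$ is globally orderable it is locally order convex (Definition~\ref{globally orderable}), so $\Omega$ is covered by finitely many open order-convex sets $W_1,\dots,W_m$ of arbitrarily small diameter; a compactness (Lebesgue-number) argument then yields $\delta_0>0$ such that each $\delta_0$-ball centered at a point of $\Omega$ lies in some $W_j$. For $t$ large, $\varphi_t(x)$ and $\varphi_t(y)$ lie in a common $W_{j(t)}$, and since every point $\gamma_t(s)$ satisfies $\varphi_t(x)\le_M\gamma_t(s)\le_M\varphi_t(y)$, order convexity forces the whole curve $\gamma_t$ into $W_{j(t)}$. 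Thus, for all large $t$, the strictly conal curve $\gamma_t$ is trapped in a small neighborhood of a point $e\in\Omega$ on which the cone field is as close as desired to the constant cone $C_M(e)$.

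Now I bring in the Perron--Frobenius expansion. By Lemma~\ref{PerronFrobenius} there exist $\tau>0$ and $\rho>1$ and the Perron--Frobenius vector field $v$ on $\Omega$ with $d\varphi_\tau(e)v(e)=\rho(e)v(e)$, $v(e)\in\text{Int}\,C_M(e)$ and $\rho(e)\ge\rho$ for all $e\in\Omega$. Working in the fixed chart $\Phi$ attached to $W_{j(t)}$ (centered at some $e\in\Omega$) and writing $\xi_t:=\Phi(\varphi_t(y))-\Phi(\varphi_t(x))=\int_0^1(\Phi\circ\gamma_t)'(s)\,ds$, the trapping of $\gamma_t$ shows that $\xi_t/\abs{\xi_t}$ approaches the cone $d\Phi(e)\,C_M(e)$ as $t\to\infty$; and since $e$ is an equilibrium, $\varphi_\tau$ fixes $e$, so a first-order expansion gives $\xi_{t+\tau}=d\varphi_\tau(e)\,\xi_t+o(\abs{\xi_t})$. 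Using the compactness of $\Omega$ to make all chart changes and error terms uniform, the uniform strong positivity of the maps $d\varphi_\tau(e)$ on $C_M(e)$ (a uniform Hilbert-metric contraction, so that $\xi_t$ enters a fixed sub-cone on which the norm genuinely grows) together with the uniform eigenvalue bound $\rho(e)\ge\rho>1$ produce a fixed $\rho'>1$ with $\abs{\xi_{t+\tau}}\ge\rho'\abs{\xi_t}$ for all $t\ge T_0$. Iterating gives $\abs{\xi_{T_0+n\tau}}\ge(\rho')^n\abs{\xi_{T_0}}\to\infty$, while $\abs{\xi_{T_0}}>0$ since $\varphi_{T_0}(x)\ll_M\varphi_{T_0}(y)$ are distinct and $\abs{\xi_t}\le C\,d(\varphi_t(x),\varphi_t(y))\to0$ with $C$ uniform on the compact orbit closure (the $\Gamma$-invariance of the metric in (H3) controlling chart distortions). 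This contradiction forces $\Omega=\{e\}$ for some $e\in E$, which is exactly the asserted dichotomy.

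The step I expect to be the main obstacle is the uniform expansion inequality $\abs{\xi_{t+\tau}}\ge\rho'\abs{\xi_t}$: converting the pointwise, chart-dependent Perron--Frobenius eigendata into a genuine geometric lower bound that survives the wandering of the reference point of $\Omega$ along the orbit. This is precisely what the uniform bound $\rho(e)\ge\rho>1$ in Lemma~\ref{PerronFrobenius} is designed to power, but it must be coupled carefully with (i) the compactness of $\Omega$ to absorb the finitely many chart changes and the ``chord $\approx$ cone vector'' error, (ii) the uniform strong positivity of $d\varphi_\tau(e)$ on $C_M(e)$, so that $\xi_t$ lands in an iterated sub-cone on which the norm is truly expanded (a single strongly positive map need not expand every cone vector), and (iii) the continuity of the cone field in (H1) and the $\Gamma$-invariance in (H3), which make these local comparisons uniform over $M$. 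By comparison, the collapse $d(\varphi_t(x),\varphi_t(y))\to0$ and the trapping of $\gamma_t$ are routine consequences of (H1)--(H2), Lemma~\ref{fundamentalresults} and precompactness.
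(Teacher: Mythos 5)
Your overall architecture is the same as the paper's: reduce via Lemma \ref{fundamentalresults}(b) to the case of a common limit set $K=\Omega\subset E$, show the two orbits collapse onto each other (via (H2) and Lemma \ref{fundamentalresults}(a)), trap the connecting strictly conal curves $\gamma_t=\varphi_t\circ\gamma$ near the equilibria, and contradict the collapse with expansion coming from Lemma \ref{PerronFrobenius}. The collapse step and the trapping step are fine; the paper traps the curves using Lawson's strong-point property of globally orderable manifolds and then measures them by Riemannian length $L(\gamma_n)$ in the $\Gamma$-invariant metric, whereas you use local order convexity and the chart chord $\xi_t$, which is a legitimate (and in places tidier) variant.

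The genuine gap is the uniform expansion $\abs{\xi_{t+\tau}}\ge\rho'\abs{\xi_t}$, and the mechanism you invoke for it does not suffice. Strong positivity of $d\varphi_\tau(e)$ together with $\rho(e)\ge\rho>1$ for all $e\in\Omega$ does \emph{not} produce a fixed sub-cone on which all of these maps expand the norm: nothing in your argument makes $\varphi_t(x)$ converge to a single point of $\Omega$, so the reference equilibrium $e_t$ may wander along $\Omega$, the Perron--Frobenius directions $v(e)$ vary with $e$, and a vector well aligned with $v(e)$ can be contracted by $d\varphi_\tau(e')$ for a different $e'\in\Omega$ (already for $2\times 2$ strongly positive matrices of spectral radius $2$ a given interior vector can be shrunk by an arbitrarily large factor, and products of such maps, each with Perron root $\ge\rho>1$, can contract every cone vector). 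What is actually needed is \emph{per-step} alignment of $\xi_t$ with the local direction $v(e_t)$ before the eigenvalue bound is used; this is precisely what the paper gets from property (iii) of the Perron--Frobenius vector field (Hilbert-metric attraction of $d\varphi_t(\cdot)u$ to $v$ along trajectories), applied to the transported tangent vectors $\gamma_n'(s)$ of the connecting curves, and it is the essential use of $v$ beyond its eigenvalue. In your scheme the alignment would have to be proved separately, e.g.\ by observing that consecutive reference equilibria satisfy $d(e_{t+\tau},e_t)\to 0$ (since $\varphi_\tau$ is Lipschitz near $\Omega$ and fixes $E$) and then running a uniform Birkhoff/Hilbert-metric contraction argument for the family $\{d\varphi_\tau(e)\}_{e\in\Omega}$ with vanishing drift and $o(\abs{\xi_t})$ errors; none of this is in the sketch, so the step you yourself flag as the crux is currently unsupported, while the rest of the proposal is sound.
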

\begin{proof}
	We just need to prove the case that $\omega(x)=\omega(y)=\{e\}$ for some $e\in E$ by Lemma \ref{fundamentalresults}(b). Suppose that $\omega(x)=\omega(y)=K\subset E$. Let $\tau>0$ be the one in Lemma \ref{PerronFrobenius}. Let $x_n=\varphi_{n\tau}(x)$, $y_n=\varphi_{n\tau}(y)$ for $n\geq 1$. Since $x\leq_M y$, there exists a conal curve $\gamma : [0,1]\to M$ such that $\gamma(0)=x$, $\gamma(1)=y$ and $\frac{d}{ds}\gamma(s)\in C_M(\gamma(s))$. Let $\gamma_n(s)=\varphi_{n\tau}(\gamma(s))$, then $\gamma_n(s)$ is a strictly conal curve connecting $x_n$ and $y_n$ such that $x_n\ll_M y_n$. Suppose that $K$ contains more than a single element.

    By passing to a subsequence if necessary, we assume that $x_n \to p$, $y_n\to q$ as $n\to \infty$, where $p,q\in K$. Thus, $p=q$. Otherwise, $p\neq q$ with $p\leq_M q$, which is a contradiction to Lemma \ref{fundamentalresults}(a). The length of $\gamma_n$ is $L(\gamma_n)=\int_{0}^{1}\abs{\frac{d}{ds}\gamma_n(s)}_{\gamma_n(s)} ds$. We assert that $L(\gamma_n)\to 0$ as $n\to \infty$.

    $M$ is a globally orderable conal manifold, then every point in $M$ is a strong point (see \cite[Proposition 5.3]{Lawson89}). We claim that for any open neighborhood $U$ of $p$, there exists a $N>0$ such that $\gamma_n \subset U$ for all $n\geq N$. In fact, suppose there exist a neighborhood $U_1$ of $p$ and a subsequence $\{\gamma_{n_k}\}$ such that $\gamma_{n_k}$ leaves $U_1$ for all $k$. Since $p$ is a strong point, there exists $V_1\subset U_1$ open containing $p$ such that every conal curve that begins in $V_1$ and leaves $U_1$ terminates outsides of $V_1$ (see \cite[Lemma 5.2]{Lawson89}). On the other hand, $\gamma_{n_k}(0)\to p$ and $\gamma_{n_k}(1)\to p$, then there exists a $k_0>0$ such that for $k\geq k_0$, $\gamma_{n_k}(0)$ and $\gamma_{n_k}(1)$ are in $V_1$, which is a contradiction. Thus, we have the claim. Since $M$ is locally compact, we can find an open neighborhood $U$ of $p$ such that $\bar{U}$ is compact. By the previous claim, there is a $N$ such that $\gamma_{n}\subset U$ for all $n\geq N$. Let $\Omega=\{z\in M : z \text{ is a limit point of a sequence } \{z_n\}, z_n\in \gamma_{n}\}$. Clearly, $p\in \Omega$. If there is a $q\in \Omega$ with $q\neq p$, then there exist neighborhoods $W_1$ of $p$ and $W_2$ of $q$ such that $W_1\cap W_2=\emptyset$ since $M$ is a Hausdorff space. $q\in \Omega$ implies that there is a subsequence $\{z_{n_i}\}$ such that $z_{n_i}\to q$, where $z_{n_i}\in \gamma_{n_i}$. Then there is a $I_1>0$ such that for $i\geq I_1$, $z_{n_i}\in W_2$ with $z_{n_i}\notin W_1$, which contradicts the previous claim for neighborhood $W_1$ of $p$. Hence, $\Omega=p$. If there exist a $\alpha>0$ and a subsequence $n_j$ such that $L(\gamma_{n_j})\geq \alpha>0$, then $\Omega_1=\{z\in M : z \text{ is a limit point of a sequence } \{z_{n_j}\}, z_{n_j}\in \gamma_{n_j}\}=p$, which is a contradiction. So, $L(\gamma_n)\to 0$ as $n\to \infty$.

    Since $x_n$ is attracted to $K$, one can choose $e_n\in K$ such that $d(x_n,e_n)\to 0$ as $n\to \infty$. On the other hand, $L(\gamma_n)\to 0$ as $n\to \infty$. Hence, $\underset{0\leq s\leq 1}{\max} d(\gamma_n(s),e_n)\to 0$ as $n\to \infty$.
    Then there exist a $\bar{N}>0$ and a coordinate neighborhood $U$ of $p$ such that $\gamma_n, e_n\in U$ for all $n\geq \bar{N}$. Since there is a smooth coordinate chart $\Phi : U \to W\subset \mathbb{R}^n$, all notations in the following have coordinate representations. With the diffeomorphism $\Phi$, we treat the following notations both in manifold $M$ and $\mathbb{R}^n$.

    For each $s\in [0,1]$, we have
    {\small
    \begin{equation}\begin{split}\label{inequation}
    \abs{\frac{d}{ds}\gamma_{n+1}(s)}_{\gamma_{n+1}(s)} &=\abs{d\varphi_{\tau}(\gamma_n(s))\frac{d}{ds}\gamma_n(s)}_{\varphi_{\tau}(\gamma_n(s))} \\ & =\abs{\Gamma(\varphi_{\tau}(\gamma_n(s)),\varphi_{\tau}(e_n))d\varphi_{\tau}(\gamma_n(s))\frac{d}{ds}\gamma_n(s)}_{\varphi_{\tau}(e_n)} \\ & \geq - \abs{\Gamma(\varphi_{\tau}(\gamma_n(s)),\varphi_{\tau}(e_n))d\varphi_{\tau}(\gamma_n(s))\frac{d}{ds}\gamma_n(s)-d\varphi_{\tau}(e_n)\Gamma(\gamma_n(s),e_n)\frac{d}{ds}\gamma_n(s)}_{\varphi_{\tau}(e_n)}\\ & \quad +\abs{d\varphi_{\tau}(e_n)\Gamma(\gamma_n(s),e_n)\frac{d}{ds}\gamma_n(s)}_{\varphi_{\tau}(e_n)},
    \end{split}\end{equation}}
    where $\Gamma$ is defined as above.

    Since $d(\gamma_n(s),e_n)\to 0$ as $n \to \infty$,  and $\Gamma$ is continuous with $\Gamma(x,x)=\text{Id}_x$ for any $x\in M$, then for any $\delta_1>0$, there is a $N_1\geq \bar{N}$ such that for $n\geq N_1$, $\norm{\Gamma(\gamma_n(s),e_n)-\Gamma(e_n,e_n)}\leq \delta_1$. Since $\varphi_t$ is smooth, then for any $\delta_2>0$, there is a $N_2\geq \bar{N}$ such that for $n\geq N_2$, $\norm{\Gamma(\varphi_{\tau}(\gamma_n(s)),\varphi_{\tau}(e_n))d\varphi_{\tau}(\gamma_n(s))-\Gamma(\varphi_{\tau}(e_n),\varphi_{\tau}(e_n))d\varphi_{\tau}(e_n)} \leq \delta_2$. Let $\delta=\min \{\delta_1,\delta_2\}$ and $N=\max \{N_1,N_2\}$, then for $n\geq N$, we can obtain that $\norm{\Gamma(\gamma_n(s),e_n)-\Gamma(e_n,e_n)}\leq \delta$ and $\norm{\Gamma(\varphi_{\tau}(\gamma_n(s)),\varphi_{\tau}(e_n))d\varphi_{\tau}(\gamma_n(s))-\Gamma(\varphi_{\tau}(e_n),\varphi_{\tau}(e_n))d\varphi_{\tau}(e_n)} \leq \delta$. If there is a $L>0$ such that $\norm{d\varphi_{\tau}(x)}\leq L$ for all $x\in K$, then for $n\geq N$, we have that
    \begin{equation}\begin{split}\label{inequ1}
    &\abs{\Gamma(\varphi_{\tau}(\gamma_n(s)),\varphi_{\tau}(e_n))d\varphi_{\tau}(\gamma_n(s))\frac{d}{ds}\gamma_n(s)-d\varphi_{\tau}(e_n)\Gamma(\gamma_n(s),e_n)\frac{d}{ds}\gamma_n(s)}_{\varphi_{\tau}(e_n)} \\ \leq &\norm{\Gamma(\varphi_{\tau}(\gamma_n(s)),\varphi_{\tau}(e_n))d\varphi_{\tau}(\gamma_n(s))-d\varphi_{\tau}(e_n)\Gamma(\gamma_n(s),e_n)} \cdot\abs{\frac{d}{ds}\gamma_n(s)}_{\gamma_n(s)} \\
    \leq &\,(\norm{\Gamma(\varphi_{\tau}(\gamma_n(s)),\varphi_{\tau}(e_n))d\varphi_{\tau}(\gamma_n(s))-\Gamma(\varphi_{\tau}(e_n),\varphi_{\tau}(e_n))d\varphi_{\tau}(e_n)}\\& + \norm{d\varphi_{\tau}(e_n)\Gamma(\gamma_n(s),e_n)-d\varphi_{\tau}(e_n)\Gamma(e_n,e_n)})
     \cdot\abs{\frac{d}{ds}\gamma_n(s)}_{\gamma_n(s)}
    \\ \leq &\,(\delta+\delta \norm{d\varphi_{\tau}(e_n)})\cdot
    \abs{\frac{d}{ds}\gamma_n(s)}_{\gamma_n(s)}
    \\ \leq &\,\delta(1+L)\abs{\frac{d}{ds}\gamma_n(s)}_{\gamma_n(s)}.
    \end{split}\end{equation}

    Next, we deal with the term $\abs{d\varphi_{\tau}(e_n)\Gamma(\gamma_n(s),e_n)\frac{d}{ds}\gamma_n(s)}_{\varphi_{\tau}(e_n)}$ in inequation \eqref{inequation}. Due to Lemma \ref{PerronFrobenius}, we have that $d\varphi_{\tau}(e_n)v(e_n)=\rho(e_n) v(e_n)$, where $\rho(e_n)>\rho>1$ and $v$ is the Perron-Frobenius vector field. And we obtain that $\abs{\frac{1}{\abs{\frac{d}{ds}\gamma_n(s)}}\frac{d}{ds}\gamma_n(s)-v(\gamma_n(s))}_{\gamma_n(s)}\to 0$ as $n\to \infty$. Since $\Gamma$ preserves the metric, $\abs{\Gamma(\gamma_n(s),e_n)\frac{1}{\abs{\frac{d}{ds}\gamma_n(s)}}\frac{d}{ds}\gamma_n(s)-\Gamma(\gamma_n(s),e_n)v(\gamma_n(s))}_{e_n}\to 0$. Since $\Gamma(\cdot,\cdot)$ and $v(\cdot)$ are continuous, we have $\abs{\Gamma(\gamma_n(s),e_n)v(\gamma_n(s))-\Gamma(e_n,e_n)v(e_n)}_{e_n}\to 0$ as $d(\gamma_n(s),e_n)\to 0$. Hence, we obtain that $\abs{\Gamma(\gamma_n(s),e_n)\frac{1}{\abs{\frac{d}{ds}\gamma_n(s)}}\frac{d}{ds}\gamma_n(s)-v(e_n)}_{e_n}\to 0$ as $n\to \infty$. Since $\norm{ d\varphi_{\tau}(e_n)}$ is bounded for $e_n\in K$, let $\epsilon_n=\abs{d\varphi_{\tau}(e_n)\Gamma(\gamma_n(s),e_n)\frac{1}{\abs{\frac{d}{ds}\gamma_n(s)}}\frac{d}{ds}\gamma_n(s)-d\varphi_{\tau}(e_n)v(e_n)}_{\varphi_{\tau}(e_n)}$, then $\epsilon_n \to 0$ as $n\to \infty$ and
    we have that
    \begin{equation*}\begin{split}
    &\abs{d\varphi_{\tau}(e_n)\Gamma(\gamma_n(s),e_n)\frac{1}{\abs{\frac{d}{ds}\gamma_n(s)}}\frac{d}{ds}\gamma_n(s)}_{\varphi_{\tau}(e_n)} \\
    =&\,\abs{d\varphi_{\tau}(e_n)\Gamma(\gamma_n(s),e_n)\frac{1}{\abs{\frac{d}{ds}\gamma_n(s)}}\frac{d}{ds}\gamma_n(s) -d\varphi_{\tau}(e_n)v(e_n)+d\varphi_{\tau}(e_n)v(e_n)}_{\varphi_{\tau}(e_n)} \\
    \geq &\,\abs{d\varphi_{\tau}(e_n)v(e_n)}_{\varphi_{\tau}(e_n)}-\abs{d\varphi_{\tau}(e_n)\Gamma(\gamma_n(s),e_n)\frac{1}{\abs{ \frac{d}{ds}\gamma_n(s)}}\frac{d}{ds}\gamma_n(s)-d\varphi_{\tau}(e_n)v(e_n)}_{\varphi_{\tau}(e_n)} \\ =&\, \rho(e_n)\abs{v(e_n)}_{e_n}-\epsilon_n \\ >&\, \rho -\epsilon_n,
%    \\ =& \rho(e_n)\vert \Gamma(\gamma_n(s),e_n)\frac{1}{\vert \frac{d}{ds}\gamma_n(s)\vert}\frac{d}{ds}\gamma_n(s)+v(e_n)-\Gamma(\gamma_n(s),e_n)\frac{1}{\vert \frac{d}{ds}\gamma_n(s)\vert}\frac{d}{ds}\gamma_n(s)\vert_{e_n}-\epsilon_n \\ \geq&\, \rho(\vert \Gamma(\gamma_n(s),e_n)\frac{1}{\vert \frac{d}{ds}\gamma_n(s)\vert}\frac{d}{ds}\gamma_n(s)\vert_{e_n} - \vert v(e_n)-\Gamma(\gamma_n(s),e_n)\frac{1}{\vert \frac{d}{ds}\gamma_n(s)\vert}\frac{d}{ds}\gamma_n(s)\vert_{e_n})-\epsilon_n \\ =&\, \rho\vert \Gamma(\gamma_n(s),e_n)\frac{1}{\vert \frac{d}{ds}\gamma_n(s)\vert}\frac{d}{ds}\gamma_n(s)\vert_{e_n}-\rho\tilde{\epsilon}_n-\epsilon_n \\ =&\, \rho\vert \frac{1}{\vert \frac{d}{ds}\gamma_n(s)\vert}\frac{d}{ds}\gamma_n(s)\vert_{\gamma_n(s)} - \delta_n,
    \end{split}\end{equation*}
%    where $\delta_n=\rho\tilde{\epsilon}_n+\epsilon_n \to 0$ as $n\to \infty$.
    Thus,
    \begin{equation}\begin{split}\label{inequ2}
    \abs{d\varphi_{\tau}(e_n)\Gamma(\gamma_n(s),e_n)\frac{d}{ds}\gamma_n(s)}_{\varphi_{\tau}(e_n)}
%    &\geq \rho\vert \frac{d}{ds}\gamma_n(s)\vert_{\gamma_n(s)} - \epsilon_n \vert \frac{d}{ds}\gamma_n(s)\vert_{\gamma_n(s)} \\
     > (\rho-\epsilon_n)\abs{\frac{d}{ds}\gamma_n(s)}_{\gamma_n(s)}.
    \end{split}\end{equation}

    It follows from \eqref{inequation}, \eqref{inequ1} and \eqref{inequ2} that
    {\small
    \begin{equation*}\begin{split}
    \abs{\frac{d}{ds}\gamma_{n+1}(s)}_{\gamma_{n+1}(s)}& \geq - \abs{\Gamma(\varphi_{\tau}(\gamma_n(s)),\varphi_{\tau}(e_n))d\varphi_{\tau}(\gamma_n(s))\frac{d}{ds}\gamma_n(s)-d\varphi_{\tau}(e_n)\Gamma(\gamma_n(s),e_n)\frac{d}{ds}\gamma_n(s)}_{\varphi_{\tau}(e_n)}\\ & \quad +\abs{d\varphi_{\tau}(e_n)\Gamma(\gamma_n(s),e_n)\frac{d}{ds}\gamma_n(s)}_{\varphi_{\tau}(e_n)} \\& \geq - \delta(1+L)\abs{ \frac{d}{ds}\gamma_n(s)}_{\gamma_n(s)} + (\rho-\epsilon_n)\abs{\frac{d}{ds}\gamma_n(s)}_{\gamma_n(s)} \\& = (\rho-\epsilon_n- \delta(1+L)) \abs{\frac{d}{ds}\gamma_n(s)}_{\gamma_n(s)}.
    \end{split}\end{equation*}
    }

    Since $\epsilon_n \to 0$ as $n\to \infty$ and $\rho>1$ by the Lemma \ref{PerronFrobenius}, we can choose $\delta$ small enough such that there exist a $\tilde{N}>N$ and $l>1$ satisfying for all $n\geq \tilde{N}$, $\rho-\epsilon_n- \delta(1+L)\geq l>1$. Thus, we obtain that $\abs{\frac{d}{ds}\gamma_{n+1}(s)}_{\gamma_{n+1}(s)} \geq l\abs{ \frac{d}{ds}\gamma_n(s)}_{\gamma_n(s)}$. So, $L(\gamma_{n+1})\geq lL(\gamma_{n})$, which is a contradiction to $L(\gamma_n)\to 0$ as $n\to \infty$.
    Thus, $K$ is a singleton.
\end{proof}

\begin{lem}\label{smooth section of cone field}
	Suppose that the cone field on $M$ admits a $C^{\infty}$-section $V$, then for each $x\in M$, there exist $\epsilon_x>0$ and a conal curve $\gamma_x : (-\epsilon_x,\epsilon_x) \to M$ such that $\gamma_x(0)=x$ and $\frac{d}{ds}\gamma_x(s)=V(\gamma_x(s))$ for $s\in (-\epsilon_x,\epsilon_x)$.
\end{lem}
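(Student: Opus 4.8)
The plan is to obtain $\gamma_x$ as a local integral curve of the $C^{\infty}$-section $V$ through $x$, and then to observe that such a curve is automatically conal. First I would invoke the fundamental existence and uniqueness theorem for ordinary differential equations: since $V$ is a $C^{\infty}$ (in particular, locally Lipschitz) vector field on the smooth manifold $M$, working in a smooth chart around $x$ produces an $\epsilon_x>0$ and a smooth curve $\gamma_x:(-\epsilon_x,\epsilon_x)\to M$ solving the initial value problem $\frac{d}{ds}\gamma_x(s)=V(\gamma_x(s))$ with $\gamma_x(0)=x$. Equivalently, one may set $\gamma_x(s)=\psi_s(x)$, where $\psi_s$ is the (local) flow generated by $V$; note that only a short time interval is needed, so no completeness hypothesis on $V$ is required.

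Second, I would check that $\gamma_x$ meets the requirements of Definition \ref{conal curve}. The curve $\gamma_x$ is $C^{\infty}$, hence continuous and piecewise smooth with everywhere-continuous derivative, so the clause about right-hand derivatives at finitely many non-smooth points is vacuous. Moreover, for every $s\in(-\epsilon_x,\epsilon_x)$ one has $\gamma_x'(s)=V(\gamma_x(s))\in C_M(\gamma_x(s))$, precisely because $V$ is by hypothesis a section of the cone field $C_M$. Restricting to any compact subinterval $[t_0,t_1]\subset(-\epsilon_x,\epsilon_x)$, the defining condition $\gamma_x'(t)\in C_M(\gamma_x(t))$ for $t_0\le t<t_1$ holds trivially, so $\gamma_x$ is a conal curve. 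This completes the argument.

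There is essentially no serious obstacle here: the statement is a direct combination of the ODE flow theorem with the elementary observation that an integral curve of a vector field taking values in the cone field is conal. The only mild point of care is matching the regularity conventions of Definition \ref{conal curve} (continuity, piecewise smoothness, right-hand derivatives), which is automatic since integral curves of smooth vector fields are smooth.
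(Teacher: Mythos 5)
Your proposal is correct and is essentially the paper's own argument: the paper simply cites the standard existence theorem for integral curves of smooth vector fields (Proposition 9.2 of Lee's \emph{Introduction to Smooth Manifolds}) and notes that $\gamma_x$ is the integral curve of $V$, which is conal because $V$ is a section of the cone field. You have merely spelled out the same two steps in more detail.
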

\begin{proof}
	See \cite[Proposition 9.2]{LeeGTM218}. In fact, $\gamma_x$ is the integral curve of $V$.
\end{proof}
By the Lemma \ref{smooth section of cone field}, we obtain that for any $x\in M$, there exists a conal curve passing through $x$. Thus, we say that $x$ can be approximated from below (resp. above) in $M$ (i.e., there exists a sequence $\{x_n\}$ in $M$ such that $x_n\leq_M x_{n+1}\leq_M x$ (resp. $x\leq_M x_{n+1}\leq_M x_n$) for $n\geq 1$ and $x_n \to x$ as $n\to \infty$).

\begin{lem}\label{improved limit set trichotomy}
%Let $M$ be a globally orderable conal manifold equipped with a continuous solid $\Gamma$-invariant cone field $C_M$, and the Riemannian metric is $\Gamma$-invariant. Assume that $\Sigma$ is SDP and satisfies the compactness condition $(C)$.
If $x_0\in M$ satisfies that there exists a sequence $\{x_n\}$ such that $x_n\leq_M x_{n+1}\leq_M x_0$ for $n\geq 1$ and $x_n\to x_0$, then one of the following alternatives must occur :
	
\begin{enumerate}[$(1)$]
	\item
	There exists $p\in E$ such that $\omega(x_n)\ll_M \omega(x_{n+1})\ll_M p=\omega(x_0)$ for all $n\geq 1$.
	
	\item
	$\omega(x_n)=\omega(x_0)=p\in E$ for $n\geq 1$.
		
	\item
	There exists $p\in E$ such that $\omega(x_n)=p\ll_M \omega(x_0)$ for all $n\geq 1$.
\end{enumerate}
\end{lem}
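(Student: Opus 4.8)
The plan is to extract the trichotomy from Proposition~\ref{improved limit set dichotomy}, applied along the chain $x_1\leq_M x_2\leq_M\cdots\leq_M x_0$, with the only genuine analytic difficulty being the collapse of $\omega(x_0)$ to a single equilibrium in one of the three branches.

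First I would feed each pair $(x_n,x_{n+1})$ and each pair $(x_n,x_0)$ into Proposition~\ref{improved limit set dichotomy}. This produces, for every $n$: either $\omega(x_n)\ll_M\omega(x_{n+1})$, or $\omega(x_n)=\omega(x_{n+1})$ is a single equilibrium; and either $\omega(x_n)\ll_M\omega(x_0)$, or $\omega(x_n)=\omega(x_0)$ is a single equilibrium. I also record from Lemma~\ref{fundamentalresults}(a) that none of the sets $\omega(x_n),\omega(x_0)$ contains two $\leq_M$-comparable points. Now split on the behaviour of the pairs $(x_n,x_0)$. If $\omega(x_{n^*})=\omega(x_0)=\{p\}\in E$ for some $n^*$, I claim $\omega(x_n)=\{p\}$ for all $n\geq n^*$: indeed, from $x_{n^*}\leq_M x_n$ and $x_n\leq_M x_0$, Proposition~\ref{improved limit set dichotomy} gives $\{p\}\ll_M\omega(x_n)$ or $\omega(x_n)=\{p\}$, and $\omega(x_n)\ll_M\{p\}$ or $\omega(x_n)=\{p\}$; were both relations strict, a point $z\in\omega(x_n)$ would satisfy $p\ll_M z\ll_M p$, producing a closed strictly conal curve through $p$, which (H1) forbids. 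This is alternative~(2) (for the sequence truncated at $n^*$, which still satisfies the hypotheses). Otherwise $\omega(x_n)\ll_M\omega(x_0)$ for all $n$, and I split further on the pairs $(x_n,x_{n+1})$. If $\omega(x_{n_0})=\omega(x_{n_0+1})=\{p\}\in E$ for some $n_0$, one shows — using that the whole sequence converges to $x_0$, the antisymmetry of $\leq_M$, and, if needed, the same closed-curve exclusion together with a length estimate as in the proof of Proposition~\ref{improved limit set dichotomy} — that $\omega(x_n)$ stays equal to $\{p\}$ thereafter, so $\omega(x_n)=\{p\}$ for all large $n$ while $\{p\}\ll_M\omega(x_0)$; this is alternative~(3).

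The remaining branch is $\omega(x_n)\ll_M\omega(x_{n+1})\ll_M\omega(x_0)$ for all $n$, and here lies the crux: I must show $\omega(x_0)$ is a single equilibrium. The route I would take is to prove $\omega(x_0)\subseteq\overline{\bigcup_n\omega(x_n)}=:\Omega$. The set $\overline{\bigcup_n O^+(x_n)}$ is closed and forward invariant and contains $x_0=\lim x_n$, hence contains $O^+(x_0)$ and $\omega(x_0)$; the content of the inclusion $\omega(x_0)\subseteq\Omega$ is exactly that this orbit-level bound can be upgraded to the level of $\omega$-limit sets. Granting it, the conclusion is formal: since $\omega(x_n)\ll_M\omega(x_0)$ for all $n$, quasi-closedness~(H2) yields $\Omega\leq_M\omega(x_0)$, i.e., every point of $\Omega$ lies $\leq_M$ every point of $\omega(x_0)$; applying this with both points chosen in $\omega(x_0)\subseteq\Omega$ and invoking antisymmetry~(H1) forces $\omega(x_0)$ to be a singleton, which, being a one-point $\omega$-limit set, is an equilibrium $p$, so that we land in alternative~(1). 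The main obstacle is precisely the inclusion $\omega(x_0)\subseteq\Omega$ — controlling $\omega(x_0)$ by the limit sets $\omega(x_n)$ rather than merely by the transient orbits $O^+(x_n)$ — and this is where the convergence $x_n\to x_0$ must be combined with the compactness condition~(P) (which makes $\Omega$ compact and invariant) and with the Perron--Frobenius vector field and a length-contraction estimate in the spirit of Proposition~\ref{improved limit set dichotomy} and Lemma~\ref{PerronFrobenius}. Everything else is bookkeeping with Proposition~\ref{improved limit set dichotomy} and the exclusion of closed conal curves under (H1).
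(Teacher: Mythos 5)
Your bookkeeping with Proposition \ref{improved limit set dichotomy} matches the paper's organization, but the crux of the lemma is exactly the branch $\omega(x_n)\ll_M\omega(x_{n+1})$ for all $n$, and there your argument stops short. You reduce the collapse of $\omega(x_0)$ to the inclusion $\omega(x_0)\subseteq\overline{\bigcup_n\omega(x_n)}$ and then concede that this inclusion is ``the main obstacle,'' to be handled by condition (P), the Perron--Frobenius field and a length-contraction estimate ``in the spirit of'' Proposition \ref{improved limit set dichotomy}. None of these tools delivers that inclusion: (P) only makes $\overline{\bigcup_n\omega(x_n)}$ compact, and the contraction estimate via Lemma \ref{PerronFrobenius} is used in Proposition \ref{improved limit set dichotomy} for a different purpose (ruling out a non-singleton $\omega(x)=\omega(y)\subset E$), not for trapping $\omega(x_0)$ inside the closure of the $\omega(x_n)$. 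So the key analytic step is asserted, not proved. The paper avoids your inclusion altogether: it defines $\Omega$ as the set of limits of selections $y_n\in\omega(x_n)$, uses $\omega(x_n)\ll_M\omega(x_m)$ for $n<m$ together with (H2) and antisymmetry (H1) to show every such selection converges to one common point $p$, which is an equilibrium by positive invariance and satisfies $\omega(x_n)\leq_M p\leq_M\omega(x_0)$; it then proves $p\in\omega(x_0)$ by contradiction --- if $p\notin\omega(x_0)$ then $p\ll_M\omega(x_0)$, and Proposition \ref{strong flow open relation}, a finite open cover of $\omega(x_0)$, and crucially the convergence $x_n\to x_0$ produce an index $N_1$ with $p\leq_M\omega(x_{N_1})$, contradicting $\omega(x_{N_1})\ll_M\omega(x_{N_1+1})\leq_M p$. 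Once $p\in\omega(x_0)$, Lemma \ref{fundamentalresults}(a) forces $\omega(x_0)=\{p\}$, which is alternative (1). That single-limit-point argument is the missing idea in your proposal.

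A secondary gap: in the sub-case $\omega(x_{n_0})=\omega(x_{n_0+1})=\{p\}$ with $\omega(x_n)\ll_M\omega(x_0)$, you assert that $\omega(x_n)$ ``stays equal to $\{p\}$ thereafter,'' but Proposition \ref{improved limit set dichotomy} applied to $(x_{n_0+1},x_n)$ also allows $\{p\}\ll_M\omega(x_n)$ for later $n$, so nothing prevents the limit sets from strictly increasing again after being momentarily constant. The correct organization (implicit in the paper, which re-indexes) is the dichotomy: either the $\omega(x_n)$ are eventually constant --- then Proposition \ref{improved limit set dichotomy} makes the common value a singleton equilibrium and you land in (2) or (3) --- or there is a strictly increasing (sub)sequence, which must be fed into the argument of the previous paragraph; your case split conflates these and leaves the transition unjustified.
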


\begin{proof}
	Let $x_0\in M$ have the property that it is approximated from below in $M$ by a sequence $\{x_n\}$ such that $x_n\leq_M x_{n+1}\leq_M x_0$ for $n\geq 1$ and $x_n\to x_0$. By Proposition \ref{improved limit set dichotomy}, either there exists a $N>0$ such that $\omega(x_n)= \omega(x_m)$ for all $n,m\geq N$, or there is a subsequence $\{x_{n_i}\}$ such that $\omega(x_{n_i})\ll_M \omega(x_{n_{i+1}})$ for all $i\geq 1$. Thus, we assume that either $\omega(x_n)=\omega(x_m)$ for all $n,m$, or $\omega(x_n)\ll_M \omega(x_{n+1})$, where $x_n\to x_0$.
	
	Suppose that $\omega(x_n)\ll_M \omega(x_{n+1})$ for $n\geq 1$ and $x_n\to x_0$ as $n\to \infty$. We first obtain that $\omega(x_n)\ll_M \omega(x_0)$ for all $n\geq 1$. In fact, if there exists $n_0>0$ such that $\omega(x_{n_0})=\omega(x_0)$, then $\omega(x_n)=\omega(x_0)$ for all $n\geq n_0$, which is a contradiction. Let $\Omega=\{y : y=\lim\limits_{n \to \infty} y_n, y_n\in \omega(x_n)\}$. By the compactness condition $(P)$, the set $\Omega$ belogs to a compact set $\overline{\underset{n\geq 0}{\bigcup}\omega(x_n)}$. Furthermore, $\Omega$ is nonempty. If $\{y_{n_k}\}$ and $\{y_{m_k}\}$ are two subsequences of $\{y_n\}$ such that $y_{n_k}\to p$ and $y_{m_k}\to q$ as $k\to \infty$. Since $y_n\in \omega(x_n)$, then for each $k$, there exists $l(k)$ such that $y_{n_k}\ll_M y_{m_{l(k)}}$. Thus, $p\leq_M q$. A similar argument shows that $q\leq_M p$, then $p=q$. So, $\lim\limits_{n \to \infty}y_n=p$ and $p\in \Omega$. Thus, $\Omega$ is nonempty. Suppose that $u,w\in \Omega$ have the property that there exist two sequences $\{u_n\}$, $\{w_n\}$ with $u_n, w_n \in \omega(x_n)$ and $u_n\to u$, $w_n\to w$ as $n\to \infty$. Since $u_n\ll_M w_{n+1}$ (resp. $w_n\ll_M u_{n+1}$), we obtain that $u\leq_M w$ (resp. $w\leq_M u$). Then $u=w$. Thus, $\Omega$ is a singleton. On the other hand, since each $\omega(x_n)$ is invariant, we obtain that $\Omega$ is positively invariant. So, $\Omega=\{p\}\in E$. It is easy to see from the arbitrariness of $y_n$ in $\omega(x_n)$ that $\lim\limits_{n \to \infty}\text{dist}(\omega(x_n), p)=0$. Since $\omega(x_n)\ll_M \omega(x_0)$, then $p\leq_M \omega(x_0)$. If $p\in \omega(x_0)$, then $\omega(x_0)=p$ by Lemma \ref{fundamentalresults}(a). Thus, $x_0$ is a convergent point and $(1)$ holds. If $p\notin \omega(x_0)$, we will get a contradiction. Since $p\in E$ and $\omega(x_0)$ is invariant, we obtain that $p\ll_M \omega(x_0)$. For each $z\in \omega(x_0)$, there exist $t_z>0$, a neighborhood $U_z$ of $p$ and a neighborhood $V_z$ of $z$ such that $\varphi_t(U_z)\ll_M \varphi_t(V_z)$ for all $t\geq t_z$. Since $\{V_z\}$ is an open cover of $\omega(x_0)$, we obtain that $\omega(x_0)\subset \bigcup_{i=1}^{n}V_{z_i}=V$, where $z_i\in \omega(x_0)$. Meanwhile, $U=\bigcap_{i=1}^{n}U_{z_i}$ is a neighborhood of $p$. Let $t_0=\underset{1\leq i\leq n}{\max}\{t_{z_i}\}$, then $\varphi_t(U)\ll_M \varphi_t(V)$ for $t\geq t_0$. On the other hand, there exists $t_1>0$ such that $\varphi_{t_1}(x_0)\in V$. Since $x_n\to x_0$, there is a $N_1>0$ such that $\varphi_{t_1}(x_{N_1})\in V$. By $p\in E$, we have that $p\ll_M \varphi_t(x_{N_1})$ for $t>t_0+t_1$. Thus, $p\leq_M \omega(x_{N_1})$. By the definition of $\Omega$, $\omega(x_n)\ll_M \omega(x_{n+1})\leq_M p$ for all $n\geq 1$. Thus, $\omega(x_{N_1})=p$ and $\omega(x_n)=p$ for all $n>N_1$, which is a contradiction. Hence, we have proved the case $(1)$.
	
	Suppose that $\omega(x_n)=\omega(x_m)$ for all $n,m$. By Proposition \ref{improved limit set dichotomy}, $\omega(x_n)=p\in E$ for all $n>1$. Since $x_n\leq_M x_0$, then $\omega(x_n)=\omega(x_0)$ or $\omega(x_n)\ll_M \omega(x_0)$. So, $(2)$ and $(3)$ hold.
\end{proof}

\begin{rmk}\label{point is approximated from above}
	An analogous result holds if $x_0$ is approximated from above.
\end{rmk}

\vskip 4mm

Now, we are ready to prove Theorem \ref{convergent thm}.

\begin{proof}[Proof of Theorem \ref{convergent thm}]
	Suppose that $x_0\in M\backslash \text{Int} C$, then we will prove that $x_0\in \overline{\text{Int} C}$. In fact, there is a sequence $\{x_n\}\subset M\backslash C$ such that $x_n \to x_0$. Since the cone field admits a $C^{\infty}$-section, then all $x_n$ can be approximated from below and above in $M$. Without loss of generality, we assume that for each $x_n$, there exists a sequence $\{z_m^n\}$ such that $z_m^n\leq_M z_{m+1}^n\leq_M x_n$ for $m\geq 1$ and $z_m^n\to x_n$ as $m\to \infty$. Since $\{x_n\}\subset M\backslash C$ for all $n$, then the case $(3)$ of Lemma \ref{improved limit set trichotomy} holds for each sequence $\{z_m^n\}$.
	
	We claim that $x_n\in \overline{\text{Int} C}$ for each $n$. If the claim holds, then $x_0\in \overline{\text{Int} C}$ since $x_n \to x_0$ and $\overline{\text{Int} C}$ is a closed set. Thus, we obtain the theorem.
	
	Now, it suffices to prove the claim. For each $n$, $z_m^n\to x_n$ as $m\to \infty$ and $\{x_n\}\subset M\backslash C$, then there exists a $p\in E$ such that $\omega(z_m^n)=p\ll_M \omega(x_n)$ for all $m\geq 1$. For $y\in \omega(x_n)$, there exist a neighborhood $W_y$ of $y$ and $t_y>0$ such that $p\ll_M \varphi_t(W_y)$ for $t\geq t_y$. Since $\{W_y\}$ is an open cover of $\omega(x_n)$, we obtain that $\omega(x_n)\subset \bigcup_{i=1}^{k}W_{y_i}=W$, where $y_i\in \omega(x_n)$. Let $t_0=\underset{1\leq i\leq k}{\max}\{t_{y_i}\}$, then $p\ll_M \varphi_t(W)$ for $t\geq t_0$. On the other hand, there is a $t_1>0$ such that $\varphi_{t_1}(x_n)\in W$. Furthermore, one can choose a neighborhood $U$ of $x_n$ such that $\varphi_{t_1}(U)\subset W$. If $x\in U$ with $x\leq_M x_n$, then there exist a neighborhood $V$ of $x$, $V\subset U$, a neighborhood $O$ of $x_n$ and $t_2>0$ such that $\varphi_t(V)\ll_M \varphi_t(O)$ for $t\geq t_2$. We can choose a $L>0$ such that $z_L^n\in O$, then we have $\varphi_t(V)\ll_M \varphi_t(z_L^n)$ for $t\geq t_2$. Since $V\subset U$, $\varphi_{t_1}(U)\subset W$ and $p\ll_M \varphi_t(W)$ for $t\geq t_0$, we obtain that $p\ll_M \varphi_{t+t_1}(V)$ for $t\geq t_0$. Thus, $p\ll_M \varphi_t(V)\ll_M \varphi_t(z_L^n)$ for $t\geq t_0+t_1+t_2$. Since $\omega(z_m^n)=p$ for all $m\geq 1$, then $\omega(s)=p\in E$ for all $s\in V$. Hence, $x\in \text{Int} C$. Since the cone field admits a $C^{\infty}$-section, then there is a sequence $\{u_m^n\}$ in $U$ such that $u_m^n\leq_M x_n$ and $u_m^n\to x_n$ as $m\to \infty$. By the previous proof, $u_m\in \text{Int} C$. Thus, $x_n\in \overline{\text{Int} C}$. And hence, we have proved the claim.
\end{proof}

\section{Proof of Lemma \ref{fundamentalresults}}

In this section, we focus on proving the critical Lemma \ref{fundamentalresults}. We first need the following proposition.

%\vskip 3mm

\begin{prop}\label{convergence criterion}
	If $x\leq_M \varphi_T(x)$ for some $T>0$, then $\varphi_t(x) \to p \in E$ as $t \to \infty$.	
\end{prop}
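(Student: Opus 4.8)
The plan is to exploit the hypothesis $x \leq_M \varphi_T(x)$ to generate a monotone sequence along the discrete-time orbit and then pass to the $\omega$-limit set, where Lemma \ref{fundamentalresults} forces degeneracy. First I would set $x_n = \varphi_{nT}(x)$ for $n \geq 0$. Applying Proposition \ref{monotone property} to $x \leq_M \varphi_T(x)$ repeatedly gives $x_n \leq_M x_{n+1}$ for all $n \geq 0$, and in fact $x_n \ll_M x_{n+1}$ for $n \geq 1$ (using the second assertion of Proposition \ref{monotone property}, since $\varphi_T$ maps an order relation to a strict one for $T>0$). Thus $\{x_n\}$ is a strictly conally increasing sequence; by the standing assumption its orbit closure is compact, so $\{x_n\}$ has limit points, all of which lie in $\omega(x) = \omega_{\varphi_T}(x)$ (note the $\omega$-limit set of $x$ under the flow and under the time-$T$ map coincide here because $O^+(x)$ is precompact and the flow is continuous).

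Next I would examine $\omega(x)$. The key point is that any two limit points $p, q$ of $\{x_n\}$ satisfy $p \leq_M q$ and $q \leq_M p$: indeed if $x_{n_k} \to p$ and $x_{m_k} \to q$, then by interlacing the two subsequences and using $x_i \ll_M x_j$ for $i < j$ (with $i \geq 1$) together with the quasi-closedness (H2), one gets $p \leq_M q$, and symmetrically $q \leq_M p$. By antisymmetry of the partial order (H1), $p = q$. Hence $\{x_n\}$ converges to a single point $p$, and $\omega(x)$ — being the orbit closure of this point under $\varphi_t$, $t \in [0,T]$ — consists of a single periodic orbit of period dividing $T$. To see that $p$ is actually an equilibrium, note that $p \leq_M \varphi_t(p)$ for all $t \in [0,T]$: this follows by continuity from $x_n \leq_M \varphi_t(x_n) = \varphi_t(x)_{\text{shifted}}$, more precisely from $x_{n} \ll_M \varphi_t(x_{n+1}) = x_{n+1}$-type relations combined with (H2); so $p$ and $\varphi_t(p)$ are two comparable points of the invariant compact set $\omega(x)$, whence by Lemma \ref{fundamentalresults}(a) they must be equal, giving $\varphi_t(p) = p$ for all $t$, i.e. $p \in E$.

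Finally, convergence of the full flow: since $x_n = \varphi_{nT}(x) \to p$ and $O^+(x)$ is precompact with $\omega(x) = \{p\}$, we conclude $\varphi_t(x) \to p$ as $t \to \infty$ in the usual way (a precompact semiorbit converges to its $\omega$-limit set when the latter is a singleton). The main obstacle I anticipate is the careful handling of the $\ll_M$ versus $\leq_M$ distinction when invoking quasi-closedness: (H2) only allows passing to the limit from the strict relation $\ll_M$, so at each stage where I want a closed-order conclusion I must arrange that the approximating pairs are genuinely strictly ordered, which is why I shift indices by one (using $\varphi_T$ with $T>0$ to upgrade $\leq_M$ to $\ll_M$ via Proposition \ref{monotone property}) before taking limits. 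Once that bookkeeping is in place, the rest is a routine compactness-and-antisymmetry argument.
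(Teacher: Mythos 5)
Your construction of the monotone discrete orbit $x_n=\varphi_{nT}(x)$, the proof that all subsequential limits coincide via (H2) and antisymmetry, and the conclusion that $x_n\to p$ with $O(p)$ a $T$-periodic orbit equal to $\omega(x)$ all match the paper's argument. The gap is in the final step, where you must rule out that $O(p)$ is a genuine (non-equilibrium) periodic orbit. You assert $p\leq_M\varphi_t(p)$ for all $t\in[0,T]$ ``by continuity from $x_n\leq_M\varphi_t(x_n)$,'' but the hypothesis $x\leq_M\varphi_T(x)$ only propagates to comparisons between points separated by integer multiples of $T$; nothing gives $x_n\leq_M\varphi_t(x_n)$ for intermediate $t\in(0,T)$, so this comparability is unjustified. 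Without it you only obtain $\varphi_T(p)=p$, i.e.\ a periodic orbit, not an equilibrium. A second, independent problem is that your appeal to Lemma \ref{fundamentalresults}(a) is circular: the paper proves that lemma in Section 4 \emph{using} Proposition \ref{convergence criterion}, so it is not available here. (This particular use could be replaced by antisymmetry alone, e.g.\ $p\leq_M\varphi_t(p)\leq_M\cdots\leq_M\varphi_{mt}(p)=p$ when $t=T/m$, but that still presupposes the unjustified comparability.)

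The missing idea is the paper's ``open interval of periods'' device. By Proposition \ref{strong flow open relation} applied to $x\leq_M\varphi_T(x)$, openness of $\ll_M$ yields $\varepsilon_0>0$ with $\varphi_{t_0}(x)\ll_M\varphi_{t_0+T+\varepsilon}(x)$ for all $\varepsilon\in(-\varepsilon_0,\varepsilon_0)$. Running your convergence argument for each such shifted period shows that $O(p)$ is $\tau$-periodic for every $\tau\in(T-\varepsilon_0,T+\varepsilon_0)$. The set $P$ of periods of $O(p)$ is closed under differences and addition, so it contains $[0,\varepsilon_0)$, then $[0,\varepsilon_0]$, and hence (writing any $t\geq0$ as $n\varepsilon_0+\tau$) all of $[0,\infty)$; therefore $\varphi_t(p)=p$ for all $t$ and $p\in E$. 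You would need to supply this (or an equivalent) argument to close the proof.
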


\begin{proof}
	If $x\leq_M \varphi_T(x)$ and $\Sigma$ is SDP, then for $t_0>0$, there exist neighborhoods $U$ of $x$, $V$ of $\varphi_T(x)$ such that $\varphi_{t_0}(U)\ll_M \varphi_{t_0}(V)$ by Proposition \ref{strong flow open relation}.
	
	By the continuity, there exists $0<\epsilon_0<t_0$ such that $\varphi_{t_0}(x)\ll_M \varphi_{t_0+T+\epsilon}(x)$ for $\epsilon \in (-\epsilon_0, \epsilon_0)$.
	We first claim that $\omega(x)$ is a $\tau$-periodic orbit for any $\tau \in (T-\epsilon_0, T+\epsilon_0)$. We just prove the case of $\tau=T$ and a similar argument for $\tau \in (T-\epsilon_0, T+\epsilon_0)$. If $x\leq_M \varphi_T(x)$, then $\varphi_{nT}(x)\ll_M \varphi_{(n+1)T}(x)$ for $n=1,2,\cdots$. Thus, there exists $p\in M$ such that $\varphi_{nT}(x) \to p$ as $n \to \infty$. In fact, if there exist two sequences $\varphi_{n_{k}T}(x)$ and $\varphi_{n_{l}T}(x)$ satisfying $\varphi_{n_{k}T}(x) \to p$ and $\varphi_{n_{l}T}(x) \to q$, then for each $k$, there is a $l(k)$ such that $\varphi_{n_kT}(x)\ll_M \varphi_{n_{l(k)}T}(x)$. Thus, $p\leq_M q$ by (H2). A similar argument shows that $q\leq_M p$. So, $p=q$ by (H1). Consider the orbit of $p$, $\varphi_{t+T}(p)=\varphi_{t+T}(\lim\limits_{n\to \infty}\varphi_{nT}(x))=\lim\limits_{n \to \infty}\varphi_{(n+1)T+t}(x)=\varphi_t(p)$ for all $t\geq 0$. Thus, $O(p)$ is a $T$-periodic orbit. Suppose that there exist $q\in M$ and $\{t_j \}$ such that $t_j \to \infty$ and $\varphi_{t_j}(x)\to q$ as $j\to \infty$. For each $j$, $t_j=n_jT+s_j$, $n_j\in \{0, 1, 2, \cdots\}$, $0\leq s_j<T$, then $\varphi_{t_j}(x)=\varphi_{n_jT+s_j}(x)=\varphi_{s_j}(\varphi_{n_jT}(x))$. Thus, $\varphi_{t_j}(x)=\varphi_{s_j}(\varphi_{n_jT}(x))\to \varphi_s(p)$ as $j\to \infty$ where $s\in [0,T]$ such that $s_j \to s$ as $j \to \infty$. So, $\omega(x)= O(p)$.
	
	Since $\omega(x)$ is a $\tau$-periodic orbit for all $\tau\in (T-\epsilon_0, T+\epsilon_0)$ and $\omega(x)=O(p)$, then $\varphi_{t+\tau}(p)=\varphi_t(p)$ for all $t\geq0$ (i.e., $O(p)$ is $\tau$-periodic). Let $P$ be the set of all periods of $\varphi_t(p)$. It is easy to see that $(T-\epsilon_0, T+\epsilon_0)\subset P$ and $\varphi_{t+s}(p)=\varphi_t(\varphi_s(p))=\varphi_t(\varphi_{s+T}(p))=\varphi_t(p)$ for all $s\in [0,\epsilon_0)$ and $t\geq 0$. Thus, $[0,\epsilon_0)\subset P$. We next prove that $T+\epsilon_0\in P$. In fact, let $\epsilon\in (0,\epsilon_0)$ and $t=\epsilon_0-\epsilon \in (0,\epsilon_0)$, then $\varphi_{T+\epsilon_0}(p)=\varphi_{T+\epsilon+(\epsilon_0-\epsilon)}(p)=\varphi_{T+\epsilon}(\varphi_t(p))=\varphi_{T+\epsilon}(p)=p$. Thus, $T+\epsilon_0\in P$. Then $[0,\epsilon_0]\subset P$.
	
	For each $t>0$, $t=n\epsilon_0+\tau$, where $n\in \{0, 1, 2, \cdots\}$, and $0\leq \tau< \epsilon_0$. $\varphi_t(p)=\varphi_{n\epsilon_0+\tau}(p)=p$. Thus, $p\in E$ and $\omega(x)=p$.
\end{proof}

\vskip 3mm

Now, we prove Lemma \ref{fundamentalresults}{\rm (a)}.

\begin{proof}[Proof of Lemma \ref{fundamentalresults}{\rm (a)}]
If there exist two points $x,y\in \omega(z)$ with $x\leq_M y$, then we can find a neighborhood $U$ of $x$, a neighborhood $V$ of $y$ for $t_0>0$ such that $\varphi_{t_0}(U)\ll_M \varphi_{t_0}(V)$ by Proposition \ref{strong flow open relation}. Since $x,y\in \omega(z)$, there exist $0<t_1<t_2$ such that $\varphi_{t_1}(z)\in U$ and $\varphi_{t_2}(z)\in V$. Then $\varphi_{t_0+t_1}(z)\ll_M \varphi_{t_0+t_2}(z)= \varphi_{t_2-t_1}(\varphi_{t_0+t_1}(z))$. By Proposition \ref{convergence criterion}, $\varphi_t(z)\to p\in E$. So, $\omega(z)=p$, which is a contradiction.
\end{proof}

\vskip 3mm

In order to prove Lemma \ref{fundamentalresults}(b), we further need several propositions.

\begin{prop}\label{colimiting principle}
	If $x\leq_M y$, $t_k\to \infty$, $\varphi_{t_k}(x)\to p$ and $\varphi_{t_k}(y)\to p$ as $k\to \infty$, then $p\in E$.
\end{prop}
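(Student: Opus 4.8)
\textbf{Proof proposal for Proposition \ref{colimiting principle}.}

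The plan is to argue by contradiction: suppose $p\notin E$, so that $\varphi_{t_0}(p)\neq p$ for some $t_0>0$. I would first use the hypothesis $x\leq_M y$ together with Proposition \ref{strong flow open relation} to produce neighborhoods $U$ of $p$ and $V$ of $p$ (after noting $\varphi_{t_k}(x)\in U$ and $\varphi_{t_k}(y)\in V$ for large $k$) such that $\varphi_\sigma(U)\ll_M\varphi_\sigma(V)$ for all $\sigma\geq\sigma_0$, for some fixed $\sigma_0>0$. The point is that since both $\varphi_{t_k}(x)$ and $\varphi_{t_k}(y)$ converge to the \emph{same} point $p$, we may pick indices $k<l$ with $t_l-t_k$ as large as we like and with $\varphi_{t_k}(y)\in V$, $\varphi_{t_l}(x)\in U$; feeding $\varphi_{t_k}(x)\in U$ and $\varphi_{t_k}(y)\in V$ into the open relation gives, after flowing forward by $t_l-t_k\geq\sigma_0$,
\[
\varphi_{t_l}(x)=\varphi_{t_l-t_k}(\varphi_{t_k}(x))\ll_M \varphi_{t_l-t_k}(\varphi_{t_k}(y))=\varphi_{t_l}(y).
\]

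Now I want to turn this into a relation of the form $q\leq_M \varphi_S(q)$ for a single point $q$ and some $S>0$, so that Proposition \ref{convergence criterion} applies and forces $\omega(q)$ (hence the orbit through $p$) to be a singleton equilibrium, contradicting $p\notin E$. The natural candidate is $q=p$: from $\varphi_{t_k}(x)\to p$, $\varphi_{t_l}(x)\to p$ and the relation $\varphi_{t_l}(x)\ll_M \varphi_{t_l}(y)$ with $\varphi_{t_l}(y)$ near $\varphi_{t_l-t_k}(p)$, one should extract — using the quasi-closedness (H2) and the open/monotone properties above — that $p\leq_M \varphi_{S}(p)$ for a suitable $S>0$ bounded away from $0$ (coming from the gap $t_l-t_k$, which I would first arrange to lie in a fixed compact interval $[\sigma_0,2\sigma_0]$ by choosing the subsequence appropriately, using that the $t_k$ are only required to tend to infinity). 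Indeed, along a subsequence $t_{l(k)}-t_{k}\to S\in[\sigma_0,2\sigma_0]$; then $\varphi_{t_k}(x)\to p$ and $\varphi_{t_{l(k)}}(x)=\varphi_{t_{l(k)}-t_k}(\varphi_{t_k}(x))\to\varphi_S(p)$, while $\varphi_{t_k}(x)\ll_M\varphi_{t_k}(y)$ combined with flowing and (H2) yields $p\leq_M\varphi_S(p)$. Applying Proposition \ref{convergence criterion} with this $p$ and $T=S$ gives $\varphi_t(p)\to p'\in E$ as $t\to\infty$; but $p$ lies on a compact invariant orbit (it is a limit point of $\varphi_{t_k}(x)$, and the full orbit closure is compact), so in fact $\omega(p)=O(p)$ must already be that single equilibrium, whence $p\in E$, a contradiction.

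The step I expect to be the main obstacle is the careful passage to the relation $p\leq_M\varphi_S(p)$: one must simultaneously (i) control the time gaps $t_{l(k)}-t_k$ so they converge to a \emph{positive} limit $S$ rather than collapsing to $0$ — this requires exploiting the freedom that $t_k$ is only a sequence going to $+\infty$, together with the strictness ($\ll_M$) coming from Proposition \ref{strong flow open relation} that keeps the open relation stable under the flow for all times $\geq\sigma_0$; and (ii) correctly invoke quasi-closedness, which only promotes $\ll_M$ relations to $\leq_M$ in the limit, so I must make sure the relation being passed to the limit is genuinely of the strict type $\varphi_{t_k}(x)\ll_M(\text{something converging to }\varphi_S(p))$ along the chosen subsequence. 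Once $p\leq_M\varphi_S(p)$ is in hand, Proposition \ref{convergence criterion} does the rest essentially immediately. A minor technical point I would also check is that the orbit through $p$ has compact closure, which is part of the paper's standing assumptions, so no extra compactness condition is needed here.
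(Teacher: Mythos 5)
Your strategy diverges from the paper's and, as written, has two fatal gaps. First, the opening step is impossible: you ask for neighborhoods $U$ and $V$ \emph{of the same point} $p$ with $\varphi_\sigma(U)\ll_M\varphi_\sigma(V)$ for all $\sigma\geq\sigma_0$. Since $p\in U\cap V$, this would force $\varphi_\sigma(p)\ll_M\varphi_\sigma(p)$, i.e.\ a closed strictly conal curve, which is excluded under (H1) (a strictly conal loop is non-constant, so it yields two distinct mutually ordered points, violating antisymmetry). Proposition \ref{strong flow open relation} supplies neighborhoods of the two \emph{distinct} base points $x$ and $y$; it cannot be transported to a pair of neighborhoods of the common limit $p$. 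Second, the pivotal relation $p\leq_M\varphi_S(p)$ is never actually produced. The only strict relations available are the equal-time ones $\varphi_s(x)\ll_M\varphi_s(y)$; at $s=t_k$ both sides tend to $p$, so quasi-closedness returns only the vacuous $p\leq_M p$. Your device of choosing $t_{l(k)}-t_k\to S\in[\sigma_0,2\sigma_0]$ fails because the sequence $\{t_k\}$ is \emph{given} in the hypothesis and you may only pass to subsequences: if, say, $t_k=2^k$, every gap $t_l-t_k$ with $l>k$ tends to infinity with $k$, so no such finite positive $S$ exists. Even when such an $S$ does exist, flow continuity gives $\varphi_{t_{l(k)}}(x)=\varphi_{t_{l(k)}-t_k}(\varphi_{t_k}(x))\to\varphi_S(p)$ while simultaneously $\varphi_{t_{l(k)}}(x)\to p$ (a subsequence of a convergent sequence), so $\varphi_S(p)=p$ outright: you obtain one period, not $p\in E$. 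Finally, the inference from Proposition \ref{convergence criterion} that $\omega(p)=O(p)$ is unjustified unless $p$ is already known to be periodic, so the last step does not close either.

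The missing idea is the paper's cross-time comparison. Apply Proposition \ref{strong flow open relation} to the original pair $x\leq_M y$ to get $U\ni x$, $V\ni y$ with $\varphi_{t_0}(U)\ll_M\varphi_{t_0}(V)$, and choose $\delta>0$ so small that $\varphi_l(x)\in U$ and $\varphi_l(y)\in V$ for $0\leq l\leq\delta$. This yields genuinely time-shifted strict relations $\varphi_{t_0+r}(x)\ll_M\varphi_{t_0}(y)$ and $\varphi_{t_0}(x)\ll_M\varphi_{t_0+r}(y)$ for all $r\in[0,\delta]$; flowing forward by $t_k-t_0$ and letting $k\to\infty$, (H2) gives both $\varphi_r(p)\leq_M p$ and $p\leq_M\varphi_r(p)$, whence $\varphi_r(p)=p$ for $r\in[0,\delta]$ by antisymmetry, and hence for all $r\geq 0$. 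No appeal to Proposition \ref{convergence criterion} is needed.
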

\begin{proof}
	Since $\Sigma$ is SDP, there exist a neighborhood $U$ of $x$, a neighborhood $V$ of $y$ for $t_0>0$ such that $\varphi_{t_0}(U)\ll_M \varphi_{t_0}(V)$ by Proposition \ref{strong flow open relation}. Let $\delta>0$ be small such that $\{\varphi_l(x) : 0\leq l\leq \delta \} \subset U$ and $\{\varphi_l(y) : 0\leq l\leq \delta \} \subset V$. Then $\varphi_s(x)=\varphi_{t_0}(\varphi_{s-t_0}(x))\ll_M \varphi_{t_0}(y)$ for any $t_0\leq s\leq t_0+\delta$. Thus, $\varphi_{t_k-t_0}(\varphi_s(x))\ll_M \varphi_{t_k-t_0}(\varphi_{t_0}(y))$ for $k$ large enough. Let $r=s-t_0$ and $k\to \infty$, we obtain $\varphi_r(p)\leq_M p$ for all $r\in [0, \delta]$. As a similar argument, we obtain $p\leq_M \varphi_r(p)$ for all $r\in [0, \delta]$. So, $p=\varphi_r(p)$ for all $r\in [0,\delta]$. For any $t>0$, we  write $t=n\delta+\tau$, where $\tau\in [0,\delta)$ and $n\in \{0, 1, 2, \cdots\}$. So, $\varphi_t(p)=\varphi_{n\delta+\tau}(p)=p$. Thus, $p\in E$.
\end{proof}

\begin{prop}\label{intersection principle}
	If $x\leq_M y$, then $\omega(x) \cap \omega(y)\subset E$.
\end{prop}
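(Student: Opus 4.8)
The plan is to pick an arbitrary $p\in\omega(x)\cap\omega(y)$ and show it is an equilibrium by engineering the hypotheses of Proposition \ref{colimiting principle}. First I would choose a sequence $s_k\to\infty$ with $\varphi_{s_k}(x)\to p$ (possible since $p\in\omega(x)$). Because $\overline{O^+(y)}$ is compact, after passing to a subsequence we may assume $\varphi_{s_k}(y)\to q$ for some $q$, and since $s_k\to\infty$ necessarily $q\in\omega(y)$.

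Next I would compare the two orbits. By Proposition \ref{monotone property}, $x\leq_M y$ forces $\varphi_{s_k}(x)\ll_M\varphi_{s_k}(y)$ for all large $k$ (as $s_k>0$). Letting $k\to\infty$ and invoking the quasi-closedness of ``$\leq_M$'' from (H2), we obtain $p\leq_M q$. But $p$ and $q$ both lie in $\omega(y)$, so Lemma \ref{fundamentalresults}(a) — which forbids two distinct ``$\leq_M$''-ordered points in a single $\omega$-limit set — forces $p=q$. Hence along the \emph{same} sequence $s_k$ we have $\varphi_{s_k}(x)\to p$ and $\varphi_{s_k}(y)\to p$.

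Now the hypotheses of Proposition \ref{colimiting principle} are met (with $x\leq_M y$, $s_k\to\infty$, and both orbits converging to $p$), so $p\in E$. Since $p$ was arbitrary in $\omega(x)\cap\omega(y)$, this gives $\omega(x)\cap\omega(y)\subset E$, as claimed.

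The proof is short precisely because the analytic content has already been packaged into Propositions \ref{monotone property}, \ref{colimiting principle} and Lemma \ref{fundamentalresults}(a); the one genuine step is the passage to the limit yielding $p\leq_M q$, and this is exactly where (H2) is indispensable — without quasi-closedness one cannot deduce any order relation between $p$ and $q$ from $\varphi_{s_k}(x)\ll_M\varphi_{s_k}(y)$, and the reduction fails. The only point requiring a little care is that the sequences realizing $p$ as a limit of the $x$-orbit and of the $y$-orbit need not coincide a priori; the trick of fixing the $s_k$ from the $x$-orbit and then collapsing $q$ onto $p$ via Lemma \ref{fundamentalresults}(a) is what produces the common sequence demanded by Proposition \ref{colimiting principle}.
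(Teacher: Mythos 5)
Your proposal is correct and follows essentially the same route as the paper: fix $p\in\omega(x)\cap\omega(y)$, take $t_k\to\infty$ with $\varphi_{t_k}(x)\to p$, pass to a subsequence so that $\varphi_{t_k}(y)\to q$, deduce $p\leq_M q$ (via strong monotonicity and (H2)), collapse $p=q$ by Lemma \ref{fundamentalresults}(a), and conclude $p\in E$ by Proposition \ref{colimiting principle}. Your write-up merely makes explicit the quasi-closedness step that the paper leaves implicit.
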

\begin{proof}
	Let $p\in \omega(x)\cap \omega(y)$, there exists a sequence $\{t_k\}$ such that $t_k\to \infty$ and $\varphi_{t_k}(x)\to p$ as $k\to \infty$. By a subsequence, we assume that $\varphi_{t_k}(y)\to q$ as $k\to \infty$. Since $x\leq_M y$ and $\Sigma$ is SDP, we obtain $p\leq_M q$ and $p,q\in \omega(y)$. The Lemma \ref{fundamentalresults}(a) means $p=q$ and Proposition \ref{colimiting principle} implies $p\in E$.
\end{proof}

\begin{lem}\label{separation lemma fixed point case}
	Let $x\leq_M y$, $p\in \omega(x)$, $q\in \omega(y)$ and $p\ll_M q$. If $p$ (or $q$) is an equilibrium, then $\omega(x)\ll_M \omega(y)$.	
\end{lem}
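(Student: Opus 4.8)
The plan is to exploit the openness of the relation $\ll_M$ (Proposition \ref{strong order open property}) together with the fact that $p$ (or $q$) is a fixed point, so that the strict ordering $p\ll_M q$ can be ``spread'' from the limit points $p,q$ to the whole $\omega$-limit sets $\omega(x),\omega(y)$ using the flow. Concretely, assume first that $p\in E$. Since $p\ll_M q$ and $q\in\omega(y)$, I would invoke Proposition \ref{strong order open property} to obtain neighborhoods $U$ of $p$ and $V_q$ of $q$ with $U\ll_M V_q$; then by continuity of the flow choose $t_0>0$ and a neighborhood $W_q$ of $q$ with $\varphi_{t_0}(W_q)\subset V_q$ and also arrange (using $p\in E$, so $\varphi_{t_0}(p)=p\in U$, and Proposition \ref{strong flow open relation}) that $\varphi_t(\cdot)$ maps a small neighborhood of $p$ into $U$ for all $t\ge 0$. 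The point is that, since $p$ is an equilibrium, the backward iterates $\varphi_{-t}$ no longer destroy the relation; the orbit of any point near $q$ stays above $p$ forever.

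Next I would upgrade this to a uniform statement over $\omega(y)$. Take an arbitrary $q'\in\omega(y)$. Because $\omega(y)$ is invariant and $q\in\omega(y)$, one can follow the orbit through $\omega(y)$ from a neighborhood of $q$ to a neighborhood of $q'$; more simply, pick sequences $s_k\to\infty$ with $\varphi_{s_k}(y)\to q'$, and use that for $k$ large $\varphi_{s_k}(y)$ lies in a neighborhood on which $p\ll_M(\cdot)$ holds (this neighborhood coming from the previous paragraph applied with $q$ replaced by points of $\omega(y)$, via an open-cover/compactness argument exactly as in the proof of Lemma \ref{improved limit set trichotomy}). Passing to the limit and using quasi-closedness (H2) — here is where $x_n\ll_M y_n$, $x_n\to p$, $y_n\to q'$ forces $p\leq_M q'$ — and then Proposition \ref{strong order open property} once more (applying the flow for a further small time and using $p\in E$) I get $p\ll_M q'$ for every $q'\in\omega(y)$, i.e. $\{p\}\ll_M\omega(y)$.

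Finally I would symmetrically spread the relation over $\omega(x)$. For arbitrary $p'\in\omega(x)$, pick $t_k\to\infty$ with $\varphi_{t_k}(x)\to p'$; since $x\leq_M y$, Proposition \ref{monotone property} gives $\varphi_{t_k}(x)\ll_M\varphi_{t_k}(y)$ for $t_k>0$, and by passing to a subsequence $\varphi_{t_k}(y)\to q''\in\omega(y)$. Quasi-closedness then yields $p'\leq_M q''$; but by the previous step $p\ll_M q''$, and combining — here one uses that $p'\in\omega(x)$, $q''\in\omega(y)$, and the already-established $\{p\}\ll_M\omega(y)$, together with an argument that $p'$ cannot fail to be strictly below points of $\omega(y)$ without contradicting Lemma \ref{fundamentalresults}(a) (which forbids $p'\leq_M p$ with both in... — actually $p\in E$ need not lie in $\omega(x)$, so I instead run the flow forward: $\varphi_t(p')\ll_M\varphi_t(q'')$ for $t>0$ keeps things strict) — I conclude $p'\ll_M\omega(y)$. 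Since $p'\in\omega(x)$ was arbitrary, $\omega(x)\ll_M\omega(y)$. The case where $q\in E$ instead of $p$ is handled by the time-reversed version of the same argument (approximating from above rather than below). I expect the main obstacle to be the bookkeeping in the compactness/open-cover step that makes the local relation $p\ll_M(\cdot)$ uniform over all of $\omega(y)$, and the careful use of (H2) at each limit passage since the conal order itself is not closed — one must always keep a strict relation $\ll_M$ in hand before taking limits, recovering strictness afterward by flowing forward a positive time and invoking Proposition \ref{monotone property}.
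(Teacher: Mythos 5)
Your first half is essentially the paper's argument and can be made to work: since $p$ is an equilibrium, once openness of $\ll_M$ (Proposition \ref{strong order open property}) gives $p\ll_M\varphi_{t_{k}}(y)$ for one large $t_{k}$, monotonicity yields $p=\varphi_t(p)\ll_M\varphi_{t+t_{k}}(y)$ for all $t>0$, so the whole tail of the orbit of $y$ stays above $p$, and (H2) then gives $p\leq_M q'$ for every $q'\in\omega(y)$ — this is the route you hint at in your first paragraph. Two wrinkles, though: the ``open-cover/compactness argument with $q$ replaced by points of $\omega(y)$'' is circular as stated (to build such a cover of $\omega(y)$ you would already need $p\ll_M z$ for every $z\in\omega(y)$), so you should drop it in favor of the tail argument; and the upgrade $p\leq_M q'\Rightarrow p\ll_M q'$ by writing $q'=\varphi_T(\varphi_{-T}(q'))$ with $\varphi_{-T}(q')\in\omega(y)$ is only strict if $p\neq\varphi_{-T}(q')$, i.e.\ you need $p\notin\omega(y)$, which must be extracted from Lemma \ref{fundamentalresults}(a); you never record this.

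The genuine gap is the final step, from $\{p\}\ll_M\omega(y)$ to $\omega(x)\ll_M\omega(y)$. Your pairing argument — choose $t_k\to\infty$ with $\varphi_{t_k}(x)\to p'$ and (along a subsequence) $\varphi_{t_k}(y)\to q''$, then apply (H2) — only relates each $p'\in\omega(x)$ to a \emph{single} point $q''\in\omega(y)$ depending on $p'$; this is far from the required relation between every point of $\omega(x)$ and every point of $\omega(y)$, and ``running the flow forward'' from $p'\leq_M q''$ neither reaches the other points of $\omega(y)$ nor is legitimately strict before you know $p'\neq q''$ (i.e.\ $\omega(x)\cap\omega(y)=\emptyset$, which you have not yet established). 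Moreover your parenthetical ``$p\in E$ need not lie in $\omega(x)$'' contradicts the hypothesis $p\in\omega(x)$, and that hypothesis is exactly the missing ingredient: the paper uses $p\in\omega(x)$, the openness of $\ll_M$ and the compactness of $\omega(y)$ to find $t_l$ with $\varphi_{t_l}(x)\ll_M\omega(y)$; then invariance of $\omega(y)$ gives $\varphi_{t_l+t}(x)\ll_M\omega(y)$ for all $t>0$, (H2) yields $a\leq_M b$ for every $a\in\omega(x)$ and every $b\in\omega(y)$, Lemma \ref{fundamentalresults}(a) then forces $\omega(x)\cap\omega(y)=\emptyset$, and strictness is finally recovered by flowing each pair backward inside the two invariant sets and applying SDP (Proposition \ref{monotone property}). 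Without this step your proposal does not reach the conclusion.
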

\begin{proof}
	Assume that $p\in E$. Since $q\in \omega(y)$ and $p\ll_M q$, there exists a sequence $\{t_k\}\to \infty$ such that $\varphi_{t_k}(y)\to q$ and hence, there exists a $k(q)>0$ such that $p\ll_M \varphi_{t_k(q)}(y)$. Thus, $p=\varphi_t(p)\ll_M \varphi_t(\varphi_{t_k(q)}(y))$ for all $t>0$. Thus, $p\leq_M \omega(y)$. By Lemma \ref{fundamentalresults}(a), $p\notin \omega(y)$. Moreover, $p\ll_M \omega(y)$. In fact, since $\omega(y)$ is invariant, for any $z\in \omega(y)$, there exists $T>0$ such that $\varphi_{-T}(z)\in \omega(y)$ and $p\leq_M \varphi_{-T}(z)$. Thus, $p\ll_M z$.
	
	Since $p\in \omega(x)$, there exists a sequence $\{t_l\}\to \infty$ such that $\varphi_{t_l}(x)\to p$.
	Since $p\ll_M \omega(y)$, there exists a $l(p)$ such that $\varphi_{t_l(p)}(x)\ll_M \omega(y)$. Thus, $\varphi_t(\varphi_{t_l(p)}(x))\ll_M \varphi_t(\omega(y))$ for all $t>0$. Since $\omega(y)$ is compact and invariant, we obtain that $\omega(x)\leq_M \omega(y)$. By Lemma \ref{fundamentalresults}(a), $\omega(x)\cap \omega(y)= \emptyset$. Since $\omega(x),\omega(y)$ are compact and invariant, then $\omega(x)\ll_M \omega(y)$. In fact, let $a\in \omega(x)$ and $b\in \omega(y)$, there is a $T_1>0$ such that $\varphi_{-T_1}(a)\in \omega(x)$ and $\varphi_{-T_1}(b)\in \omega(y)$ with $\varphi_{-T_1}(a)\leq_M \varphi_{-T_1}(b)$, then $a\ll_M b$.
	
	A similar argument is used if $q\in E$.
\end{proof}

\begin{lem}\label{perturbation lemma}
	Assume that $K\subset M$ is a compact set in which the flow $\varphi_t$ is SDP. Then there exists $\delta>0$ with the following property. Let $\psi_t$ denote the flow of a $C^1$ vector field $g$ such that $g\in U_{\delta}(f)$, where $U_{\delta}(f)$ is a $\delta$-neighborhood of $f$ in space of $C^1$ vector fields on $M$ with the $C^1$ topology. Then there exists $t_0>0$ such that if $K$ is positively invariant under the flow of $\psi_t$, then $\psi_t$ is SDP for all $t\geq t_0$.
\end{lem}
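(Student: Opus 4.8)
The plan is to produce, depending only on $f$, $K$ and $C_M$, a single time $T>0$ and a radius $\delta>0$ such that every $g\in U_\delta(f)$ whose flow $\psi_t$ keeps $K$ positively invariant is strongly differentially positive on $K$ \emph{at each individual time} $s\in[T/2,T]$, i.e. $d\psi_s(x)\bigl(C_M(x)\setminus\{0\}\bigr)\subset\text{Int}\,C_M(\psi_s(x))$ for all $x\in K$ and $s\in[T/2,T]$; then $t_0:=T$ will do. Indeed, given any $t\ge T$, the interval $[\,t/T,\,2t/T\,]$ has length $t/T\ge 1$ and hence contains an integer $n\ge 1$, so that $s:=t/n\in[T/2,T]$. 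Writing $\psi_t=\psi_s\circ\cdots\circ\psi_s$ ($n$ times) and iterating from $x\in K$ and $u\in C_M(x)\setminus\{0\}$: at each step the single-time property sends $C_M(\cdot)\setminus\{0\}$ into $\text{Int}\,C_M(\cdot)$, which, since $C_M$ is a solid pointed cone (so $0\notin\text{Int}\,C_M$), is again contained in $C_M(\cdot)\setminus\{0\}$; all the intermediate base points $\psi_{js}(x)$ lie in $K$ by positive invariance, so the hypothesis of the next step is met. Chaining the $n$ inclusions gives $d\psi_t(x)\bigl(C_M(x)\setminus\{0\}\bigr)\subset\text{Int}\,C_M(\psi_t(x))$, which is SDP at time $t$.

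It remains to find $T$ and $\delta$ with the single-time property. Fix any $T>0$; by forward completeness the saturation $K':=\bigcup_{0\le s\le T}\varphi_s(K)$ is compact, and $K\subset K'$ stays in $K'$ under $\psi$ by positive invariance. Cover $K'$ by finitely many charts (or, when (H3) is in force, use the $\Gamma$-trivialization, which identifies every $C_M(x)$ with one model cone $C_0\subset\mathbb{R}^n$ and the flow derivatives with a continuous family of linear maps of $\mathbb{R}^n$); in either picture $\text{dist}\bigl(d\varphi_s(x)u,\ \partial C_M(\varphi_s(x))\bigr)$ is a well-defined continuous function of $(x,s,u)$ on the compact set $\{x\in K,\ s\in[T/2,T],\ u\in C_M(x),\ \abs{u}_x=1\}$. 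Since $\varphi_s$ is SDP for every $s>0$, this function never vanishes, hence is $\ge\eta$ for some $\eta>0$. Now invoke the classical $C^1$-continuous dependence of the time-$s$ map and its spatial derivative on the vector field in the $C^1$ topology, uniformly for $s\in[0,T]$ and base point in $K'$: one chooses $\delta>0$ so small that, for every $g\in U_\delta(f)$ with $K$ $\psi$-invariant, $d\psi_s(x)$ and $\psi_s(x)$ are so close to $d\varphi_s(x)$ and $\varphi_s(x)$ that — using also continuity of the cone field — $d\psi_s(x)u$ still lies at positive distance from $\partial C_M(\psi_s(x))$, hence in $\text{Int}\,C_M(\psi_s(x))$, for all such $x,u$ and all $s\in[T/2,T]$. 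Note that $\eta$, $\delta$, $T$ are all fixed before $\psi$ enters, as the statement requires.

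The crux — and the reason the lemma is not trivial — is that plain differential positivity is a boundary-marginal (``closed'') condition and is \emph{not} preserved under arbitrarily small $C^1$ perturbations: a $C_M$-boundary vector may be nudged out of the cone. Consequently the bootstrap in the first paragraph cannot be run by composing one genuinely SDP time-step with several merely differentially-positive sub-steps; it is essential that \emph{every} increment used lies in the window $[T/2,T]$ on which $\psi$ enjoys the quantitative, perturbation-robust strict positivity of the second paragraph, and that such increments tile $[T,\infty)$ — precisely the elementary covering remark $n\in[t/T,2t/T]$. The remaining work is bookkeeping that I would not spell out in detail: making the ``margin'' $\eta$ coordinate-free (cleanest via the $\Gamma$-trivialization of (H3), otherwise via a fixed finite atlas over $K'$), quantifying how much the cone field and the flow data are allowed to move so that half the margin survives, and running the compactness argument for $\varphi$ on the compact saturation $\bigcup_{0\le s\le T}\varphi_s(K)$ rather than on $K$ itself, since invariance is assumed only for the perturbed flow.
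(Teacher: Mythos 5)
Your proposal is correct and follows essentially the same route as the paper: first obtain, by compactness of $K$, continuity of the cone field, and $C^1$-continuous dependence of the flow and its derivative on the vector field, the strict inclusion $d\psi_s(x)\bigl(C_M(x)\setminus\{0\}\bigr)\subset \mathrm{Int}\,C_M(\psi_s(x))$ uniformly for $s$ in a fixed compact window, and then extend to all $t\ge t_0$ by composing time steps lying in that window, using positive invariance of $K$ to keep the intermediate base points in $K$. The only differences are bookkeeping (you subdivide $t$ into $n$ equal steps $t/n\in[T/2,T]$, while the paper writes $t=r+kt_0$ with $r\in[t_0,2t_0)$) and that you make the uniform margin $\eta$ and the compactness argument explicit where the paper leaves them implicit.
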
	
\begin{proof}
	%For any $x\in K$, we can pick a smooth chart $\Phi : U_x \to W$, where $U_x\subset M$ is the coordinate neighborhood of $x$, $W$ is an open set in $\mathbb{R}^n$.
	%Hence, $g\in U_{\delta}(f)$ if and only if $\vert f(z)-g(z)\vert+\parallel Df(z)-Dg(z)\parallel<\delta$ for $z\in U_x$ (see \cite[Appendix]{H85}).
	%Then there exists a $t_0>0$ such that $\varphi_t(x)\in U_x$ for all $t\in [t_0, 2t_0]$. Since $K$ is a compact set, $t_0$ is uniform for all $x\in K$.
	
	We first assume that $t\in [t_0, 2t_0]$, where $t_0>0$ is fixed. Since $\Sigma$ is SDP, we obtain that $d\varphi_t(x)v \in \text{Int} C_{M}(\varphi_t(x))$ for all $v\in C_M(x)$ with $\vert v \vert_x =1$. With a coordinate chart, we treat the following notations both in manifold and $\mathbb{R}^n$. By the continuity of the cone field, there exist neighborhoods $U$ of $\varphi_t(x)$ and $V$ of $d\varphi_t(x)v$ such that $V\subset C_M(y)$ for all $y\in U$. So, there exists $\delta>0$ such that for $g\in U_{\delta}(f)$, $\psi_t(x)\in U$ and $d\psi_t(x)v\in V$ (see e.g., \cite{Niu19,NiuXie23}). Thus, $d\psi_t(x)v\in \text{Int} C_M(\psi_t(x))$ for $t\in [t_0, 2t_0]$, $v\in C_M(x)\backslash \{0\}$.
	
	For $t\geq 2t_0$, let us write $t=r+kt_0$, where $r\in [t_0,2t_0)$ and $k\in \{1, 2, \cdots\}$. Define $x_j=\psi_{jt_0}(x)$, $j=1,2,\cdots,k$. It is clear that $x_j\in K$ if $K$ is positively invariant. Then $d\psi_t(x)v=d\psi_r(x_k)d\psi_{t_0}(x_{k-1})\cdots d\psi_{t_0}(x)v$. By the preceding proof, $d\psi_t(x)v\in \text{Int} C_M(\psi_t(x))$ for $t> 2t_0$.
	
	Thus, we have proved that $\psi_t$ is SDP for all $t\geq t_0$.
\end{proof}

\begin{lem}\label{separation lemma periodic case}
	Let $x\leq_M y$, $p\in \omega(x)$, $q\in \omega(y)$ and $p\ll_M q$. If $p$ (or $q$) belongs to a periodic orbit, then $\omega(x)\ll_M \omega(y)$.	
\end{lem}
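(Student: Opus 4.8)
The plan is to follow the proof of Lemma \ref{separation lemma fixed point case} as closely as possible, with a periodic orbit of minimal period $\tau>0$ playing the role that the single equilibrium played there. The only genuine change is that the identities $\varphi_t(p)=p$, which were free when $p$ was an equilibrium, must be replaced by the weaker identities $\varphi_{n\tau}(p)=p$, and one has to make up the difference. I would first reduce to the case in which $p$ lies on a periodic orbit $\mathcal{O}=O(p)$ of minimal period $\tau>0$ (the case in which $q$ lies on a periodic orbit is handled by the mirror-image argument, running the flow forward from $\omega(x)$ rather than from $\omega(y)$, and using that $O(q)\subset\omega(y)$). Since $\omega(x)$ is invariant, $\mathcal{O}\subset\omega(x)$, and likewise $O(q)\subset\omega(y)$.

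A preliminary observation I would record is that a genuine periodic orbit contains no two distinct $\leq_M$-comparable points. Indeed, if $a\leq_M b$ with $a,b\in\mathcal{O}$ and $b=\varphi_\sigma(a)$, $\sigma\in(0,\tau)$, then applying $\varphi_{\tau-\sigma}$ and using strong differential positivity gives $\varphi_{\tau-\sigma}(a)\ll_M a$; writing $c=\varphi_{\tau-\sigma}(a)$, so that $a=\varphi_\sigma(c)$, one gets $c\ll_M\varphi_\sigma(c)$, and iterating the flow yields $c\ll_M\varphi_{k\sigma}(c)$ for all $k\ge1$. If $\sigma/\tau\in\mathbb{Q}$ this closes into a strictly conal loop, contradicting (H1); if $\sigma/\tau\notin\mathbb{Q}$, density of $\{\varphi_{k\sigma}(c)\}$ in $\mathcal{O}$ together with (H2) forces $c\leq_M d$ for every $d\in\mathcal{O}$, in particular $c\leq_M\varphi_\sigma(c)$ while $\varphi_\sigma(c)\leq_M c$, contradicting antisymmetry. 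Hence $\mathcal{O}$ (and $O(q)$) is an unordered compact invariant set.

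The crux is the analogue of the step ``$p\ll_M\omega(y)$''. From $p\ll_M q$ and Proposition \ref{strong order open property} I obtain $p\ll_M\varphi_{t_k}(y)$ along a sequence $t_k\to\infty$ with $\varphi_{t_k}(y)\to q$; applying $\varphi_{m\tau}$ and using $\varphi_{m\tau}(p)=p$ gives $p\ll_M\varphi_{m\tau+t_k}(y)$ for every $m\ge1$, so that letting $m\to\infty$, (H2) yields $p\leq_M z$, and then one more application of $\varphi_\tau$ (using $\varphi_\tau$-invariance) yields $p\ll_M z$, for every $z$ in the time-$\tau$ $\omega$-limit set $\omega^\tau(q)\subseteq\omega(y)$ of $q$. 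To pass from $\omega^\tau(q)$ to all of $\omega(y)$, I would exploit the connectedness of $\omega(y)$ together with the perturbation Lemma \ref{perturbation lemma}: on a compact, positively invariant neighbourhood $K$ of $\omega(x)\cup\omega(y)$ on which $\varphi_t$ is SDP, an arbitrarily small $C^1$ perturbation $g$ of $f$ still generates a flow $\psi_t$ that is SDP for all large $t$; choosing $g$ so that the phase coupling between $\mathcal{O}$ and $\omega(y)$ is synchronized, running the preceding argument for $\psi$, and then letting the perturbation tend to $0$ while invoking the (upper semicontinuous) dependence of limit sets and the quasi-closedness (H2), I would conclude $p\ll_M z$ for all $z\in\omega(y)$. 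This step — showing that the phase of $\mathcal{O}$ along which $p\leq_M z$ can be chosen independently of $z\in\omega(y)$ — is where essentially all the difficulty lies, and it is the only place the perturbation lemma enters; for an equilibrium the point was its own whole orbit, so this difficulty did not arise.

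Once $p\ll_M\omega(y)$ is in hand, the conclusion follows exactly as in the proof of Lemma \ref{separation lemma fixed point case}: since $p\in\omega(x)$, choosing $t_l\to\infty$ with $\varphi_{t_l}(x)\to p$ and covering $\omega(y)$ by finitely many neighbourhoods furnished by Proposition \ref{strong order open property} gives $\varphi_{t_l}(x)\ll_M\omega(y)$ for large $l$, hence $\varphi_s(x)\ll_M\omega(y)$ for all large $s$, and passing to limits with (H2) yields $\omega(x)\leq_M\omega(y)$; Lemma \ref{fundamentalresults}(a) then forces $\omega(x)\cap\omega(y)=\emptyset$, and flowing both compact invariant sets backward a time $T_1>0$ and forward by $T_1$ upgrades $\leq_M$ to $\ll_M$, so that $\omega(x)\ll_M\omega(y)$.
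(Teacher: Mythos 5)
Your reduction, the observation that a periodic orbit is unordered, and your final paragraph (the passage from $p\ll_M\omega(y)$ to $\omega(x)\ll_M\omega(y)$) are all fine and match the paper's scheme. The problem is the crux, which you yourself flag as containing ``essentially all the difficulty'': passing from ``$p\leq_M z$ for $z$ in a time-$\tau$ discrete limit set'' (a proper subset of $\omega(y)$ in general, and in fact the limit set of $\varphi_{t_k}(y)$ rather than of $q$) to ``$p\ll_M z$ for all $z\in\omega(y)$''. The mechanism you propose does not do this. Lemma \ref{perturbation lemma} only says that a $C^1$-nearby flow on a positively invariant compact set is eventually SDP; it provides no way to ``synchronize the phase coupling between $\mathcal{O}$ and $\omega(y)$'' (a notion that is not even defined when $\omega(y)$ is not a periodic orbit), and no control of how $\omega$-limit sets or the relations $\leq_M$, $\ll_M$ behave as the perturbation is removed --- limit sets do not depend on the vector field in any way that would let you pass the desired order relation to the limit. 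Connectedness of $\omega(y)$ cannot bridge the gap either, since the set of phases $s\in[0,\tau)$ with $\varphi_s(p)\leq_M z$ may a priori depend on $z$. So the key step is asserted, not proved.

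The paper closes this gap by a genuinely different device. It invokes the Closing Lemma (not merely Lemma \ref{perturbation lemma}) to produce a $C^1$-close vector field $g$, coinciding with $f$ outside a neighborhood of the orbit closure of $q$ that is disjoint from $\gamma=O(p)$, whose flow $\psi_t$ is eventually SDP, keeps $\gamma$ as a periodic orbit, and has a closed orbit through $q$. With \emph{both} $p$ and $q$ on periodic orbits one proves the stronger statement $\gamma\ll_M q$: if the ratio of the two periods is rational, a supremum argument over the phases $\epsilon$ with $\psi_\epsilon(p)\ll_M q$, using a common period, shows the supremum is infinite; if irrational, density of $\{\psi_{nT_2}(\psi_\epsilon(p))\}$ in $\gamma$ together with (H2) gives $\gamma\leq_M q$, and one application of $\psi_{T_2}$ upgrades this to $\gamma\ll_M q$. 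Since $\ll_M$ is a property of the cone field alone, $\gamma\ll_M q$ holds for the original system, and then your concluding argument runs with the compact invariant set $\gamma$ in place of the single point $p$: $\gamma\ll_M\varphi_{t_{k(q)}}(y)$, hence $\gamma=\varphi_t(\gamma)\ll_M\varphi_{t+t_{k(q)}}(y)$ for all $t>0$, hence $\gamma\leq_M\omega(y)$, and so on. Ordering the whole orbit $\gamma$ below $q$ is exactly what sidesteps the phase problem you ran into; to repair your proposal you would need either this Closing Lemma argument or an independent proof that $\varphi_s(p)\ll_M q$ for every $s\geq 0$, which your outline does not supply.
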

\begin{proof}
	
	Assume that $p\in \gamma$, $\gamma$ is a periodic orbit and $q$ is not an equilibrium (the other case is similar).
	
	If $\gamma \cap \omega(y)\neq \emptyset$, then $p\subset \omega(y)$. Thus, $p\in E$ by Proposition \ref{intersection principle}, which is a contradiction. Thus, $\omega(y)\subset M \backslash \gamma$. Since $q\in \omega(y)$, then the orbit closure of $q$ is in the compact set $\omega(y)\subset M \backslash \gamma$.

	By the Closing Lemma (see e.g., \cite{H85,Pugh}), there is a $C^1$ vector field $g$ whose flow $\psi_t$ has a closed orbit $\beta_g$ passing through $q$. Moreover, $g$ can be chosen to $C^1$ approximate $f$ as closely as desired and to coincides with $f$ outside a given neighborhood $U$ of the orbit closure of $q$ with respect to $f$ such that $U\cap \gamma=\emptyset$. Thus, $\psi_t$ is eventually SDP by Lemma \ref{perturbation lemma} and $\gamma$ is also a closed orbit of $\psi_t$. In the following, we write $\gamma_f$ (resp. $\gamma_g$) as the orbit generated by $f$ (resp. $g$). So, $\gamma=\gamma_f=\gamma_g$.
	
	For the system generated by vector field $g$, $\gamma_g$ and $\beta_g$ are two periodic orbits and $p\in \gamma_g$, $q\in \beta_g$ with $p\ll_M q$. Then $\gamma_g\ll_M q$.
	
	In fact, if $p\ll_M q$, then there is a $\epsilon_0>0$ such that $\psi_\epsilon(p)\ll_M q$ for all $\epsilon\in [0, \epsilon_0]$. Let $T_1, T_2>0$ be the periods of $\gamma_g$ and $\beta_g$ and $a=\frac{T_2}{T_1}$. If $a$ is a rational number, let $a=\frac{n_1}{n_2}$, $n_1,n_2 \in \{1,2,\cdots\}$, then $n_1T_1=n_2T_2$. Define $\epsilon_1=\sup \{\epsilon > 0 : \psi_{\tau}(p)\ll_M q \text{ for all } \tau\in [0,\epsilon] \}$. Suppose that $\epsilon_1<\infty$. Then $\psi_{\epsilon_1}(p)\leq_M q$. Since $\psi_{\epsilon_1}(p)=\psi_{n_1T_1}(\psi_{\epsilon_1}(p))=\psi_{n_2T_2}(\psi_{\epsilon_1}(p))\ll_M \psi_{n_2T_2}(q)=q$, one has $\psi_{\epsilon_1}(p)\ll_M q$. Thus, there exists $\delta>0$ such that $\psi_{\epsilon_1+\epsilon}(p)\ll_M q$ for all $\epsilon\in [0,\delta]$, which is a contradiction. Thus, $\psi_{\epsilon}(p)\ll_M q$ for all $\epsilon \geq 0$. So, $\gamma_g\ll_M q$. If $a$ is an irrational number and $T_2=aT_1$, the set $W=\{\psi_{nT_2}(\psi_{\epsilon}(p)) : n=1,2,\cdots\}$ is dense in $\gamma_g$ for any fixed $\epsilon\in [0,\epsilon_0]$. For any $z\in \gamma_g$, there exists a sequence $\{z_i\}\subset W$ such that $z_i\ll_M q$ and $z_i\to z$ as $i\to \infty$.
	Thus, $\gamma_g\leq_M q$. So, $\gamma_g=\psi_{T_2}(\gamma_g)\ll_M q$.

	For system $\Sigma$, we obtain that $\gamma\ll_M q$. Since $q\in \omega(y)$, then there is a sequence $\{t_k\}\to \infty$ such that $\varphi_{t_k}(y)\to q$ as $k\to \infty$. Then there exists $k(q)>0$ such that $\gamma\ll_M \varphi_{t_{k(q)}}(y)$. Thus, we obtain that $\gamma\ll_M \varphi_t(\varphi_{t_k(q)}(y))$ for all $t>0$. So, $\gamma\leq_M \omega(y)$. On the other hand, $\gamma \cap \omega(y)=\emptyset$ and $\gamma,\omega(y)$ are invariant, then $\gamma\ll_M \omega(y)$. Since $\gamma\subset \omega(x)$, in a similar way we can obtain that $\omega(x)\ll_M \omega(y)$.
\end{proof}

\begin{prop}\label{separation lemma}
	Let $x\leq_M y$, $p\in \omega(x)$, $q\in \omega(y)$ and $p\ll_M q$. Then $\omega(x)\ll_M \omega(y)$.	
\end{prop}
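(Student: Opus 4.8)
The plan is to reduce the general case to the two special cases already handled in Lemmas \ref{separation lemma fixed point case} and \ref{separation lemma periodic case}, by exploiting the dynamics inside $\omega(y)$ near the point $q$. We are given $x\le_M y$, $p\in\omega(x)$, $q\in\omega(y)$, and $p\ll_M q$. First I would use Proposition \ref{strong order open property} to fix neighborhoods $U$ of $p$ and $V$ of $q$ with $U\ll_M V$. Since $p\in\omega(x)$ and $q\in\omega(y)$ and the flow is SDP (so the two orbits move together under $d\varphi_t$), I would propagate this relation forward: picking a time $t_k$ with $\varphi_{t_k}(x)$ close to $p$, Proposition \ref{strong flow open relation} gives $\varphi_{t_k+t}(x)\ll_M\varphi_{t_k+t}(\text{nbhd of }y)$ for $t$ large, and letting $t_k\to\infty$ along a subsequence where $\varphi_{t_k}(y)$ converges, one obtains a point $q'\in\omega(y)$ with $p\le_M q'$ (in fact, after a further forward push, $p\ll_M q'$ by the openness of $\ll_M$ and invariance of $\omega(y)$). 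The upshot is that without loss of generality one may assume $q$ is chosen so that the whole orbit $O(q)\subset\omega(y)$ stays in the region ordered above $p$.

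The heart of the argument is then a case analysis on the orbit structure of $q$ inside the compact invariant set $\omega(y)$. If $O(q)$ has compact closure (which it does, being inside $\omega(y)$), then $\omega_{\varphi}(q)$ and $\alpha_{\varphi}(q)$ are nonempty, and in particular $\omega(y)$ contains a minimal set $N$. I would aim to find within $\omega(y)$ either an equilibrium or a periodic orbit, ordered appropriately with respect to $p$, so that Lemma \ref{separation lemma fixed point case} or Lemma \ref{separation lemma periodic case} applies directly; then transitivity of $\ll_M$ (combined with the compactness and invariance trick used repeatedly above: if $a\le_M b$ with $a,b$ in a compact invariant set $W$, then pushing $\varphi_{-T}a\le_M\varphi_{-T}b$ forward yields $a\ll_M b$) upgrades the partial separation to the full conclusion $\omega(x)\ll_M\omega(y)$. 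Concretely: take any point $r$ in a minimal subset of $\omega(y)$; by the colimiting/openness machinery one shows $p\ll_M r$; a minimal set carrying a flow on a manifold need not be a periodic orbit in general, so the cleaner route is to invoke the Closing Lemma exactly as in Lemma \ref{separation lemma periodic case}: $C^1$-perturb $f$ near the orbit closure of $q$ (keeping away from any structure near $p$, and keeping $\omega(y)$'s relevant piece positively invariant so that Lemma \ref{perturbation lemma} keeps the perturbed flow eventually SDP) to produce a closed orbit through a nearby point, reducing to the periodic case.

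The main obstacle I anticipate is the bookkeeping needed to guarantee that the perturbation in the Closing Lemma step can be localized away from $p$ and from $\omega(x)$ while still producing a closed orbit ordered strictly above $p$, and that the ordering relations $p\ll_M q$ survive the perturbation — i.e. that $\ll_M$ for the perturbed flow still links the relevant points. This is handled by the openness of $\ll_M$ (Proposition \ref{strong order open property}): since $p\ll_M q$ is an open condition and the perturbed closed orbit passes through a point arbitrarily close to $q$, the relation persists, and $\gamma\ll_M q$ for the perturbed flow transfers back to $\Sigma$ because $\gamma$ and the relevant arc are unchanged outside the perturbation region. Once a periodic orbit (or equilibrium) in $\omega(y)$ strictly above $p$ is secured, Lemma \ref{separation lemma periodic case} (resp. \ref{separation lemma fixed point case}) closes the argument, and a symmetric treatment near $p\in\omega(x)$ (using an equilibrium or periodic orbit in $\omega(x)$ below $q$) finishes the proof when neither $p$ nor $q$ a priori sits on such an orbit.
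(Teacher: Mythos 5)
Your toolkit is the right one (Closing Lemma plus Lemma \ref{perturbation lemma}, reduction to Lemmas \ref{separation lemma fixed point case} and \ref{separation lemma periodic case}, then propagation by openness and quasi-closedness), but the place where you apply the Closing Lemma creates a genuine gap. You propose to perturb $f$ near the orbit closure of $q$, which lies \emph{inside} $\omega(y)$, and then to invoke Lemma \ref{separation lemma periodic case} as if you had produced ``a periodic orbit in $\omega(y)$ strictly above $p$.'' But the closed orbit produced by the Closing Lemma is an orbit of the perturbed flow $\psi_t$, not an invariant subset of $\omega(y)$ for $\varphi_t$, and since the support of the perturbation meets $\omega(y)$ you cannot keep ``$\omega(y)$'s relevant piece'' intact: the forward orbit of $y$ accumulates on exactly the region you have modified, so the $\psi$-limit sets of $y$ (or of points along its orbit) are uncontrolled, and the hypotheses of Lemma \ref{separation lemma periodic case} for $\psi_t$ (ordered points whose $\psi$-limit sets contain $p$ and $q$) are never verified in your sketch. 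One can patch this by taking $y'=q$ itself (so $\omega_{\psi}(y')$ is the closed orbit) and $x'=\varphi_{t_k}(x)$ close to $p$ with $x'\ll_M q$ by Proposition \ref{strong order open property}, after checking via Proposition \ref{intersection principle} that the relevant arc of the orbit of $q$ avoids $\omega(x)$; but even then the conclusion is only $\omega(x)\ll_M q$, i.e. all of $\omega(x)$ sits below a \emph{single} point of $\omega(y)$. Pushing this forward by the original flow only reaches the closure of the forward orbit of $q$, not all of $\omega(y)$, so your final ``transitivity upgrades the partial separation'' step fails as stated; a further bootstrap over every point of $\omega(y)$ would be required. (A smaller issue: your ``without loss of generality the whole orbit $O(q)$ stays ordered above $p$'' is not justified, since SDP gives $\varphi_t(p)\ll_M\varphi_t(q)$, not $p\ll_M\varphi_t(q)$, when $p$ is not an equilibrium.)

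The paper perturbs on the other side, and this is exactly what removes both difficulties: the Closing Lemma is applied along a finite arc $\{\varphi_t(p):\abs{t}\le T\}$ of the orbit of $p$, which is disjoint from $\omega(y)$ because $p\notin\omega(y)$, producing a closed orbit $\gamma_g$ of $\psi_t$ through $p$ while leaving $\omega(y)$ untouched. Choosing $y_0=\varphi_{t_q}(y)$ in a neighborhood $U_q$ of $q$ with $p\ll_M U_q$ and $U_q\cap N_\eps=\emptyset$ gives $\omega_g(y_0)=\omega(y)$, so Lemma \ref{separation lemma periodic case} applied to $\psi_t$ yields $\gamma_g\ll_M\omega_g(y_0)=\omega(y)$, i.e. the single point $p$ lies strictly below \emph{all} of $\omega(y)$; this is precisely the relation that propagates under the original flow (openness at $\varphi_{t_{k(p)}}(x)$, then invariance and Lemma \ref{fundamentalresults}(a)) to $\omega(x)\le_M\omega(y)$ and finally to $\omega(x)\ll_M\omega(y)$. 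Your closing remark about ``a symmetric treatment near $p$'' gestures at this, but the essential idea --- that the perturbation must be localized at $p$ so that $\omega(y)$ survives as a limit set of the perturbed flow, giving one point below all of $\omega(y)$ rather than all of $\omega(x)$ below one point of $\omega(y)$ --- is missing from your argument.
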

\begin{proof}
	If $p\in E$, then $\omega(x)\ll_M \omega(y)$ by Lemma \ref{separation lemma fixed point case}. In the following, we assume that $p$ is not an equilibrium. Thus, $p\notin \omega(y)$.
	
	By the Closing Lemma (see e.g., \cite{H85,Pugh}), there exists $T>0$ such that for any $\epsilon>0$, we can choose a vector field $g$ such that $g=f$ outside the $\epsilon$-neighborhood $N_\epsilon$ of the set $\{\varphi_t(p) : \vert t \vert\leq T \}$. Moreover, $\psi_t$ with respect to the vector field $g$ has a closed orbit $\gamma_g$ passing through $p$ and $\psi_t$ is eventually SDP. For small $\epsilon$, $\omega(y)\subset M\backslash N_\epsilon$. It is easy to see that there exists a $y_0$ such that $p\leq_M y_0$ and $\omega_g(y_0)=\omega(y)$, where $\omega_g(y_0)=\{z\in M : \text{there exists a sequence }t_k \to \infty \text{ such that } \psi_{t_k}(y_0)\to z\}$.
	In fact, since $p\ll_M q$, there is a neighborhood $U_q$ of $q$ such that $U_q\cap N_\epsilon=\emptyset$ and $p\ll_M U_q$. Then there is a $t_q>0$ such that $\varphi_{t_q}(y)\in U_q$. Let $y_0=\varphi_{t_q}(y)$, then $p\ll_M y_0$ and $\omega_g(y_0)=\omega(y)$ since $g=f$ outside $N_\epsilon$ with $\omega(y)\subset M\backslash N_\epsilon$.
	Since $p\in \gamma_g$, $q\in \omega_g(y_0)$ and $p\ll_M q$, then $\gamma_g=\omega_g(p)\ll_M \omega_g(y_0)$ by Lemma \ref{separation lemma periodic case}. Thus, $p\ll_M \omega(y)$. Since $p\in \omega(x)$, there exists a sequence $\{t_k\}\to \infty$ such that $\varphi_{t_k}(x)\to p$. Then there exists $k(p)>0$ such that $\varphi_{t_{k(p)}}(x)\ll_M \omega(y)$. Thus, we obtain that $\varphi_t(\varphi_{t_{k(p)}}(x))\ll_M \varphi_t(\omega(y))$ for all $t>0$. So, $\omega(x)\leq_M \omega(y)$. Since $\omega(x)$ and $\omega(y)$ are nonordering invariant sets and $\Sigma$ is SDP, then $\omega(x)\cap \omega(y)= \emptyset$ and $\omega(x)\ll_M \omega(y)$.
\end{proof}

Now, we are ready to prove Lemma \ref{fundamentalresults}{\rm (b)}.

\begin{proof}[Proof of Lemma \ref{fundamentalresults}{\rm (b)}]
If $p\in \omega(x)$, then there exists a sequence $t_k \to \infty$ such that $\varphi_{t_k}(x)\to p$. By passing to a subsequence if necessary, we assume that $\varphi_{t_k}(y)\to q\in \omega(y)$. Since $\Sigma$ is SDP and $x\leq_M y$, we obtain that $p\leq_M q$. If $p\neq q$, then $\varphi_t(p)\ll_M \varphi_t(q)$ for any $t>0$. Thus, $\omega(x)\cap \omega(y)= \emptyset$ and $\omega(x)\ll_M \omega(y)$ by Proposition \ref{separation lemma}.

If $p=q$, then $p=q\in E$ by Proposition \ref{colimiting principle}. If $\omega(x)=\omega(y)$, then $\omega(x)=\omega(y)\subset E$ by Proposition \ref{intersection principle}. If $\omega(x)\neq \omega(y)$, then there exists $p_1\in \omega(x)\backslash \omega(y)$ such that there is a sequence $t_l \to \infty$ with $\varphi_{t_l}(x)\to p_1$. Therefore, there is $q_1\in \omega(y)$ such that $\varphi_{t_l}(y)\to q_1$ by a subsequence if necessary. Thus, $p_1\neq q_1$ and $p_1\leq_M q_1$. So, $\varphi_t(p_1)\ll_M \varphi_t(q_1)$ for any $t>0$ and hence, we obtain that $\omega(x)\cap \omega(y)= \emptyset$ and $\omega(x)\ll_M \omega(y)$ by Proposition \ref{separation lemma}, which is a contradiction.
\end{proof}

\appendix

\section{Appendix}

\subsection{Order structures on space-times}

A \textit{Lorentz metric} $g$ for a smooth manifold $M$ of dimension four is a smooth nondegenerate symmetric tensor field of type $(0,2)$ on $M$ such that for each $p\in M$, by suitable choice of the basis, $g\vert_{p}$ has the matrix diag$(+1, +1, +1, -1)$.	
A \textit{space-time} $(M,g)$ is a connected $C^{\infty}$ Hausdorff manifold $M$ of dimension four with a Lorentz metric $g$.

\begin{rmk}
	Although the arguments refer to $4$-dimensional space-times, the results can be extended to a space-time of $n(\geq 2)$-dimensions. See \cite{Beemandehrlich81,Hawkingandellis73,Penrose72}.
\end{rmk}

Let $(M,g)$ be a space-time, a vector $v\in T_pM$ is said to be \textit{timelike}, \textit{null}, \textit{spacelike} according to whether $g(v,v)$ is negative, zero, or positive, respectively.
The non-spacelike (i.e., timelike and null) vectors in $T_pM$ form two so-called \textit{Lorentzian cones} $C$ and $-C$ (see e.g., \cite{Hawkingandellis73}). Furthermore, the timelike vectors form the interior of the Lorentzian cones. See Fig.\ref{nullcone}.

\begin{figure}[th]
	\centering
	\includegraphics[width=0.7\columnwidth]{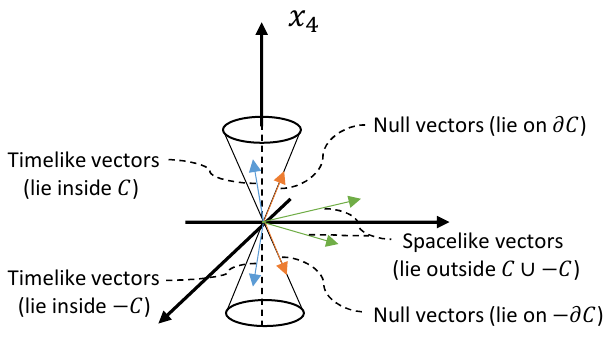}\label{nullcone}
	\center{{\bf Fig A.1}: The timelike vectors, null vectors and spacelike vectors in $T_pM$.}
\end{figure}

A space-time $(M,g)$ is said to be \textit{time-orientable} if $M$ admits a continuous Lorentzian cone field $C_M$, which is generated by Lorentz metric $g$.

In the remained of this subsection, we assume that space-time $(M,g)$ is time-orientable. Thus, $M$ is a conal manifold with respect to a continuous Lorentzian cone field $C_M$. In such situation, the non-spacelike vectors, which belong to cone field $C_M$, are called future directed.

A \textit{non-spacelike curve} is a continuous piecewise smooth curve whose tangent vector is future directed non-spacelike.
A \textit{timelike curve} is a continuous piecewise smooth curve whose tangent vector is future directed timelike.
Thus, in a time-orientable space-time, a non-spacelike (resp., timelike) curve is a conal (resp., strictly conal) curve and vice versa.
The order ``$\leq_M$" (resp., ``$\ll_M$") on $M$ is well-defined by the non-spacelike (resp., timelike) curves.

A non-spacelike curve is also known as \textit{causal} curve. In general relativity, each point of manifold $M$ corresponds to an event. And a signal can be sent from $p$ to $q$ if there is a (future directed) causal curve from $p$ to $q$. Thus, closed causal curves generate paradoxes involving causality (i.e., violate causality). As a result, we assume that the space-time $(M,g)$ is \textit{causal}, i.e., $(M,g)$ contain no closed non-spacelike (conal) curves (see e.g., \cite{Beemandehrlich81,Hawkingandellis73,Hawkingandsachs74,Penrose72}).

For a given point $p\in M$, the \textit{chronological future} $I^+(p)$, \textit{chronological past} $I^-(p)$, \textit{causal future} $J^+(p)$, and \textit{causal past} $J^-(p)$ of $p$ are defined as follows:
\begin{itemize}
	\item  $I^+(p)=\{q\in M : p\ll_M q\}$; \ \  $I^-(p)=\{q\in M : q\ll_M p\}$;
	%	\item  $I^-(p)=\{q\in M : q\ll_M p\}$; %\ \  $I^-[S]=\bigcup\limits_{p\in S}I^-(p)$;
	\item  $J^+(p)=\{q\in M : p\leq_M q\}$; \ \  $J^-(p)=\{q\in M : q\leq_M p\}$.
	%	\item  $J^-(p)=\{q\in M : q\leq_M p\}$. %\ \  $J^-[S]=\bigcup\limits_{p\in S}J^-(p)$.
\end{itemize}

\begin{rmk}
	In some articles, the set $J^+(p)$ would be called \textit{forward set} or \textit{reachable set} from $p$ and be written as $\uparrow p$; the set $J^-(p)$ would be called \textit{backward set} or \textit{controllable set} from $p$ and be written as $\downarrow p$.
\end{rmk}

\begin{rmk}
	For any $p\in M$, $I^{\pm}(p)$ are open by Proposition \ref{strong order open property}.
\end{rmk}

$I^+$ is said to be \textit{inner continuous} at $p\in M$ if each compact set $K\subset I^+(p)$, there exists a neighborhood $U(p)$ of $p$ such that $K\subset I^+(q)$ for each $q\in U(p)$.
$I^+$ is said to be \textit{outer continuous} at $p\in M$ if each compact set $K\subset M\backslash \overline{I^+(p)}$, there exists a neighborhood $U(p)$ of $p$ such that $K\subset M\backslash \overline{I^+(q)}$ for each $q\in U(p)$. The \textit{inner} and \textit{outer continuity} of $I^-$ are defined dually. See \cite{Hawkingandsachs74}.

\begin{prop}\label{innercontinuous}
	For any $p\in M$, $I^{\pm}(p)$ are inner continuous.
\end{prop}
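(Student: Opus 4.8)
The plan is to deduce inner continuity of $I^{+}$ directly from the openness of the relation ``$\ll_M$'' (Proposition \ref{strong order open property}) together with the compactness of $K$; the statement for $I^{-}$ is entirely dual, so I would only write out the case of $I^{+}$.

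Fix $p\in M$ and let $K\subset I^{+}(p)$ be compact, so that $p\ll_M k$ for every $k\in K$. First I would apply Proposition \ref{strong order open property} pointwise: for each $k\in K$, since $p\ll_M k$, there are neighborhoods $U_k$ of $p$ and $V_k$ of $k$ with $U_k\ll_M V_k$, i.e. $q\ll_M k'$ for all $q\in U_k$ and all $k'\in V_k$. The family $\{V_k:k\in K\}$ is then an open cover of $K$, and compactness yields a finite subcover $V_{k_1},\dots,V_{k_m}$. Put
\[
U(p)=\bigcap_{i=1}^{m}U_{k_i},
\]
which is again a neighborhood of $p$ (a finite intersection of neighborhoods).

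The verification is now immediate: given any $q\in U(p)$ and any $k\in K$, choose $i$ with $k\in V_{k_i}$; since $q\in U(p)\subset U_{k_i}$ and $U_{k_i}\ll_M V_{k_i}$, we get $q\ll_M k$, that is $k\in I^{+}(q)$. Hence $K\subset I^{+}(q)$ for every $q\in U(p)$, which is exactly inner continuity of $I^{+}$ at $p$. Running the same argument with the roles of past and future exchanged (again invoking Proposition \ref{strong order open property}, whose conclusion is symmetric in the two endpoints) gives inner continuity of $I^{-}$.

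There is essentially no serious obstacle here: the only ingredients are the openness of ``$\ll_M$'' and a standard finite-subcover argument. The one point worth stating carefully is that Proposition \ref{strong order open property} provides a \emph{common} neighborhood $U_k$ of $p$ that is forced below \emph{all} of $V_k$ simultaneously, so that intersecting finitely many such $U_{k_i}$ still leaves a neighborhood of $p$ that lies below every piece of the finite subcover; this is what makes the compactness reduction work.
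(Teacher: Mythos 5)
Your argument is correct: the openness of ``$\ll_M$'' in both variables (Proposition \ref{strong order open property}) plus a finite subcover of $K$ by the sets $V_{k_i}$, followed by intersecting the corresponding neighborhoods $U_{k_i}$ of $p$, does give $K\subset I^{+}(q)$ for all $q$ in $\bigcap_i U_{k_i}$, and the past case is dual. The paper reaches the same conclusion by a slightly different route: for each $x\in K\subset I^{-}(p)$ it interpolates a point $w=\gamma(\tfrac12)$ on a strictly conal curve from $x$ to $p$, so that $x\in I^{-}(w)$ and $w\in I^{-}(p)$, covers $K$ by finitely many sets $I^{-}(w_1),\dots,I^{-}(w_n)$, and takes the neighborhood of $p$ to be $U=\bigcap_{i=1}^{n}I^{+}(w_i)$; then $u\in U$ gives $w_i\ll_M u$, hence $I^{-}(w_i)\subset I^{-}(u)$ by transitivity, and $K\subset I^{-}(u)$. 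The practical difference is which form of openness is invoked: the paper's version needs only that the sets $I^{\pm}(w)$ are open (the half of Proposition \ref{strong order open property} that is actually proved there) together with transitivity of ``$\ll_M$'', and it produces a neighborhood of $p$ that is itself order-theoretic, a device reused later (e.g.\ in the outer-continuity argument of Proposition \ref{outercontinuous}); your version leans on the full two-sided statement of Proposition \ref{strong order open property}, which is what the proposition asserts, so this is a legitimate and somewhat more direct packaging of the same compactness argument.
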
	

\begin{proof}
	Suppose $K\subset I^{-}(p)$ is compact. For any $x\in I^-(p)$, i.e., $x\ll_M p$, there is a strictly conal curve $\gamma$ such that $\gamma (0)=x$ and $\gamma(1)=p$. Let $w=\gamma(\frac{1}{2})$, then $x\ll_M w\ll_M p$, i.e., $w\in I^-(p)$ and $x\in I^-(w)$. Since $I^-(w)$ is open, then $I^-(w)$ is an open neighborhood of $x$. Thus, $\{I^-(w): w\in I^-(p)\}$ is an open covering of $K$. Since $K$ is a compact set, we choose $w_1, w_2, \cdots, w_n$ to determine a finite subcovering. On the other hand, $w_i\in I^-(p)$ implies that $p\in I^+(w_i)$, $i=1,2,\cdots, n$. So, $U=\bigcap^{n}_{i=1}I^+(w_i)$ is an open neighborhood of $p$. For any $u\in U$, $u\in I^+(w_i)$, $i=1,2,\cdots, n$. Then $w_i\in I^-(u)$. For any $y\in I^-(w_i)$, since $y\ll_M w_i$ and $w_i\ll_M u$, then $y\ll_M u$, i.e., $y\in I^-(u)$. So, $I^-(w_i)\subset I^-(u)$ for $i=1,2,\cdots,n$. Thus, $K\subset \bigcup ^{n}_{i=1}I^-(w_i)\subset I^-(u)$. So, we have proved that $I^{-}(p)$ is inner continuous.
	
	A similar argument is used for $I^+(p)$.
\end{proof}

\begin{prop}\label{transitive}
	\begin{enumerate}[$(1)$]
		\item $x\ll_M y$, $y\leq_M z$ implies $x\ll_M z$,
		\item $x\leq_M y$, $y\ll_M z$ implies $x\ll_M z$.
	\end{enumerate}
\end{prop}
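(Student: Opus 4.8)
Assertions $(1)$ and $(2)$ are the ``push-up'' property of the causal and chronological relations familiar from relativity. It is enough to prove $(1)$: applying $(1)$ to the reversed cone field $-C_M$ --- which is again continuous and solid and satisfies $p\le_{-C_M}q\iff q\le_{C_M}p$ and $p\ll_{-C_M}q\iff q\ll_{C_M}p$ --- and reversing every curve turns $(1)$ into $(2)$.

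So fix $x\ll_M y$ and $y\le_M z$, and choose a strictly conal curve from $x$ to $y$ together with a conal curve $\tau:[0,1]\to M$ with $\tau(0)=y$ and $\tau(1)=z$. The plan is to obtain $x\ll_M z$ from a connectedness argument applied to
\[
A=\{\,t\in[0,1]:x\ll_M\tau(t)\,\}=\tau^{-1}\!\bigl(I^{+}(x)\bigr).
\]
Since $I^{+}(x)$ is open (Proposition \ref{strong order open property}) and $\tau$ is continuous, $A$ is open in $[0,1]$, and $0\in A$ because the chosen strictly conal curve joins $x$ to $\tau(0)=y$. If $t_{\ast}:=\sup A$ lies in $A$, then, $A$ being open, $t_{\ast}=1$ and hence $x\ll_M z$. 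Thus it suffices to prove the local push-up claim: if $t_n\nearrow t_{\ast}$ with $x\ll_M\tau(t_n)$ for all $n$, then $x\ll_M\tau(t_{\ast})$.

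Write $w=\tau(t_{\ast})$. For $n$ large let $\sigma_n$ be a strictly conal curve from $x$ to $\tau(t_n)$, and let $\mu_n=\tau|_{[t_n,t_{\ast}]}$, the short conal arc from $\tau(t_n)$ to $w$. The concatenation $\sigma_n\cdot\mu_n$ is a conal curve from $x$ to $w$ that fails to be strictly conal only along $\mu_n$, so $x\ll_M w$ will follow once this curve is deformed, with its endpoints held fixed, into a strictly conal curve. This is precisely the classical corner-smoothing lemma of causality theory (Penrose \cite{Penrose72}, Hawking--Ellis \cite{Hawkingandellis73}): because $C_M$ is continuous and solid one can build, on a tube around $\mu_n$, a smooth vector field $W$ with $W(p)\in\text{Int}\,C_M(p)$ for all $p$ (a mollified local continuous selection of $\text{Int}\,C_M$); replacing $\mu_n$ by the solution of $\dot c=\mu_n'+\psi\,W(c)$, $c(t_n)=\tau(t_n)$, for a small tapered weight $\psi\ge 0$, pushes the tangent into $\text{Int}\,C_M$ while keeping the initial tangent $\mu_n'(t_n)+\psi(t_n)\,W(\tau(t_n))\in\text{Int}\,C_M(\tau(t_n))$, so the deformed arc splices with $\sigma_n$ into a strictly conal curve. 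Since $\tau(t_n)\to w$, the displacement of the far endpoint is absorbed --- and strictness retained --- using the openness of $I^{+}(x)$ and the transitivity of $\ll_M$, yielding $x\ll_M w$.

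The main obstacle is exactly this deformation. The weight $\psi$ must be small enough that $c$ stays in the tube where $C_M$ barely varies, yet strong enough that adding $\psi\,W(c)$ carries the (possibly boundary) vector $\mu_n'$ strictly inside the shifted cone on all of $[t_n,t_{\ast})$ --- a balance between the solidity of $C_M$ (room for $W$ deep in the interior) and its continuity (control of the drift of $C_M$ along $\mu_n$) --- and one must also arrange that the deformed curve genuinely terminates at $w$. Once the corner at $\tau(t_n)$ is smoothed, the remaining ingredients --- openness of $A$, the connectedness step, and the cone-reversal giving $(2)$ --- are routine.
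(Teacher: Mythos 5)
Your reduction of (2) to (1) via the reversed cone field and the openness/connectedness bookkeeping are fine, but the heart of the argument --- the claim that the corner-smoothing deformation can be carried out using only continuity and solidity of $C_M$ --- is exactly where the proof fails, and it cannot be repaired at that level of generality. The paper's own Remark following this proposition cites Chru\'{s}ciel--Grant (Example 1.11) and Garc\'{i}a-Heveling for the fact that the push-up property \emph{fails} for merely continuous Lorentzian metrics, i.e.\ for continuous solid cone fields: ``causal bubbling'' produces points $z\in J^{+}(x)$ that are not even in $\overline{I^{+}(x)}$. Since your argument invokes nothing beyond continuity and solidity of $C_M$, it would prove a statement known to be false; the proposition is asserted (and cited from Penrose, Prop.~2.18, and Hawking--Ellis, p.~183) in the smooth space-time setting, where the classical proofs use the smooth local causal structure (convex normal neighborhoods, the analysis of null geodesics / variations of causal curves), none of which appears in your sketch.

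Concretely, the gap sits in the endpoint handling of your deformation. Flowing $\dot c=\mu_n'+\psi\,W(c)$ pushes the tangent into $\text{Int}\,C_M$ but moves the far endpoint: you obtain a strictly conal curve from $x$ to some $w'$ near (in fact, causally beyond) $w=\tau(t_\ast)$, and $x\ll_M w'$ does not yield $x\ll_M w$. Openness of $I^{+}(x)$ works in the wrong direction here: it says that membership of a point in $I^{+}(x)$ persists under small perturbations of that point, not that $w$ belongs to $I^{+}(x)$ because nearby points $\tau(t_n)$ or $w'$ do --- the latter is precisely the ``closedness from the left'' of $A$ at $\sup A$ that you set out to prove, so the argument is circular at its key step. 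The alternative of fixing the endpoint $w$ and letting the initial point drift would require $\tau(t_n)\in\overline{I^{-}(w)}$, i.e.\ $J^{-}(w)\subset\overline{I^{-}(w)}$, which is again exactly what bubbling destroys for continuous cone fields and what the smooth (or at least Lipschitz) structure of the Lorentz metric is needed to guarantee. So the missing ingredient is a genuine use of the regularity of $g$ --- as in the cited classical proofs --- not a finer tuning of the weight $\psi$.
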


\begin{proof}
	See \cite[p183]{Hawkingandellis73} and \cite[Proposition 2.18]{Penrose72}.
\end{proof}

\begin{rmk}
	In \cite{ChruscielandGrant12,Garcia2021}, Proposition \ref{transitive} is called \textit{push-up lemma}. The push-up lemma fails in the case where the spacetime metric (i.e., Lorentz metric) is continuous, see \cite[Example 1.11]{ChruscielandGrant12} or \cite{Garcia2021}.
	
\end{rmk}

\begin{prop}\label{closedprop}
	$I^{\pm}(p)\subset J^{\pm}(p)\subset \overline{I^{\pm}(p)}=\{q\in M: I^{\pm}(q)\subset I^{\pm}(p)\}$.
\end{prop}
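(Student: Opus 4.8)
The plan is to prove the two inclusions and the final identity from left to right, and to treat only the ``future'' sets $I^+,J^+$ in detail, since the statements for $I^-,J^-$ follow verbatim after reversing the time-orientation (equivalently, by the symmetry of the definitions).

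The inclusion $I^+(p)\subset J^+(p)$ is immediate from $\text{Int}\,C_M(x)\subset C_M(x)$: a strictly conal curve is a conal curve, so $x\ll_M y$ implies $x\leq_M y$. The geometric fact I would isolate first and reuse throughout is that \emph{through every point $q\in M$ there is a local strictly conal curve}; equivalently $q\in\overline{I^+(q)}\cap\overline{I^-(q)}$. This is where time-orientability enters: it provides a smooth timelike vector field $V$, i.e. a $C^\infty$-section of $C_M$ with values in $\text{Int}\,C_M$, and by Lemma~\ref{smooth section of cone field} its integral curve $\gamma_q$ through $q$ is strictly conal, with $\gamma_q(s)\to q$ as $s\to 0$, $\gamma_q(s)\in I^+(q)$ for small $s>0$, and $\gamma_q(s)\in I^-(q)$ for small $s<0$.

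Given this, $J^+(p)\subset\overline{I^+(p)}$ follows quickly: if $q\in J^+(p)$, i.e. $p\leq_M q$, then for small $s>0$ we have $q\ll_M\gamma_q(s)$, hence $p\ll_M\gamma_q(s)$ by the push-up lemma (Proposition~\ref{transitive}(2)); since $\gamma_q(s)\in I^+(p)$ and $\gamma_q(s)\to q$, this gives $q\in\overline{I^+(p)}$. For the identity, write $B=\{q\in M:I^+(q)\subset I^+(p)\}$. If $q\in B$ then $\gamma_q(s)\in I^+(q)\subset I^+(p)$ for small $s>0$ and $\gamma_q(s)\to q$, so $q\in\overline{I^+(p)}$; thus $B\subset\overline{I^+(p)}$. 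Conversely, if $q\in\overline{I^+(p)}$ and $z\in I^+(q)$, then since $\ll_M$ is open (Proposition~\ref{strong order open property}) there is a neighborhood $U$ of $q$ with $u\ll_M z$ for all $u\in U$; picking $u\in U\cap I^+(p)$ (nonempty since $q\in\overline{I^+(p)}$) and using transitivity of $\ll_M$ gives $p\ll_M z$, so $I^+(q)\subset I^+(p)$ and $\overline{I^+(p)}\subset B$. The same arguments with reversed time give the $I^-$ statements.

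I expect the only delicate point to be the reusable fact $q\in\overline{I^\pm(q)}$, i.e. the existence of strictly conal curves issuing from an arbitrary point: it rests on the solidity and continuity of the Lorentzian cone field, equivalently on the standard fact that a time-orientable space-time admits a smooth timelike vector field. Once that is in hand, the rest is a short bookkeeping exercise combining the push-up lemma, openness of $\ll_M$, and transitivity of $\ll_M$.
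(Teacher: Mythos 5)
Your argument is correct, but it is worth knowing that the paper does not actually prove this proposition: it simply cites Penrose (Propositions 3.3 and 3.9 of \cite{Penrose72}) for the two nontrivial assertions, so your proposal supplies a self-contained proof where the paper gives only references. Your route is the standard one, and all three steps check out: $I^+(p)\subset J^+(p)$ is trivial; $J^+(p)\subset\overline{I^+(p)}$ follows from the push-up lemma (Proposition \ref{transitive}), which appears earlier in the appendix, so there is no circularity; and both inclusions of the identity $\overline{I^+(p)}=\{q: I^+(q)\subset I^+(p)\}$ use only the openness of $\ll_M$ (Proposition \ref{strong order open property}), transitivity of $\ll_M$, and the fact you correctly isolate as the one delicate point, namely $q\in\overline{I^\pm(q)}$ for every $q$. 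On that point, two remarks: the paper's Lemma \ref{smooth section of cone field} only assumes a section valued in $C_M(x)$, so to get a \emph{strictly} conal integral curve you must indeed invoke the interior-valued (timelike) field that time-orientability provides, as you do; alternatively, you do not need any global section at all, since a short straight segment in a chart in the direction of an interior vector stays strictly conal by continuity and solidity of the cone field (exactly the local construction used in the proof of Proposition \ref{strong order open property}), which makes the argument purely local and slightly more economical. Either way, what your version buys is an explicit proof inside the paper's own framework of conal orders, rather than an appeal to the space-time literature.
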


\begin{proof}
	It is clear that $I^+(p)\subset J^+(p)$. See \cite[Proposition 3.3]{Penrose72} for $\overline{I^+(p)}=\{q\in M: I^+(q)\subset I^+(p)\}$ and \cite[Proposition 3.9]{Penrose72} for $J^+(p)\subset \overline{I^+(p)}$.
\end{proof}

\begin{thm}\label{quasiclosed}
%	{\rm (Quasi-closed Theorem)}
	For any $p\in M$, if $J^{\pm}(p)$ are closed and $I^{\pm}(p)$ are outer continuous, then the conal order ``$\leq_M$" is quasi-closed.
\end{thm}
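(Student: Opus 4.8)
The plan is to reduce the claimed implication to the single membership $y \in \overline{I^{+}(x)}$ and then to exploit outer continuity to pass to the limit. First I observe that the closedness hypothesis pins down the causal futures: by Proposition~\ref{closedprop} we have $I^{+}(x) \subset J^{+}(x) \subset \overline{I^{+}(x)}$, and taking closures in the first inclusion while using that $J^{+}(x)$ is closed gives $\overline{I^{+}(x)} \subset J^{+}(x)$; hence $J^{+}(x) = \overline{I^{+}(x)}$. In particular, $x \leq_M y$ holds \emph{iff} $y \in \overline{I^{+}(x)}$. This identification is exactly where the closedness of $J^{\pm}$ enters, and it is what lets the hypothesis (phrased with ``$\ll_M$'') bear on the conclusion (phrased with ``$\leq_M$'').

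Assume, for contradiction, that $y \notin \overline{I^{+}(x)}$. Then $M \setminus \overline{I^{+}(x)}$ is an open set containing $y$, so by local compactness of the manifold $M$ it contains a compact neighborhood $K$ of $y$. Applying outer continuity of $I^{+}$ at the point $x$ to the compact set $K \subset M \setminus \overline{I^{+}(x)}$ produces a neighborhood $U$ of $x$ such that $K \subset M \setminus \overline{I^{+}(q)}$ for every $q \in U$. The point of invoking outer continuity here is to convert the pointwise fact ``$y \notin \overline{I^{+}(x)}$'' into a statement that is uniform over a whole neighborhood $U$ of $x$ and over a whole neighborhood $K$ of $y$ simultaneously.

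Now I synchronize the two convergences. Since $x_n \to x$ and $y_n \to y$, there is $N$ such that $x_n \in U$ and $y_n \in K$ for all $n \geq N$. For such $n$, the choice of $U$ gives $y_n \in K \subset M \setminus \overline{I^{+}(x_n)}$, so in particular $y_n \notin I^{+}(x_n)$; but $x_n \ll_M y_n$ says precisely that $y_n \in I^{+}(x_n)$ --- a contradiction. Hence $y \in \overline{I^{+}(x)} = J^{+}(x)$, i.e. $x \leq_M y$, which is the quasi-closedness of ``$\leq_M$''. The main obstacle here, and the reason a naive limiting argument does not work, is that in the relation $x_n \ll_M y_n$ both the point $y_n$ and the set $I^{+}(x_n)$ vary with $n$, so one cannot simply take $n \to \infty$ inside a fixed set; outer continuity is the mechanism that freezes the relevant open sets over a neighborhood of $x$.

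An alternative route, avoiding the compact-neighborhood construction, is to prove directly that $I^{+}(y) \subset I^{+}(x)$ (which, by Proposition~\ref{closedprop} and $J^{+}(x) = \overline{I^{+}(x)}$, again gives $y \in J^{+}(x)$). Given $z \in I^{+}(y)$, pick $z_0$ on a strictly conal curve from $y$ to $z$ with $y \ll_M z_0 \ll_M z$; the openness of ``$\ll_M$'' (Proposition~\ref{strong order open property}) forces $y_n \ll_M z_0$ for all large $n$, whence $x_n \ll_M z_0$ by transitivity of ``$\ll_M$'', whence $x \in \overline{I^{-}(z_0)} = J^{-}(z_0)$ using closedness of $J^{-}$, and finally $x \ll_M z$ by the push-up Proposition~\ref{transitive}. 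This variant trades outer continuity for the push-up lemma but is otherwise parallel.
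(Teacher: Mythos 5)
Your main argument is correct and is essentially identical to the paper's proof: reduce to showing $y\in\overline{I^{+}(x)}=J^{+}(x)$, take a compact set $K$ around $y$ inside $M\setminus\overline{I^{+}(x)}$, use outer continuity of $I^{+}$ at $x$ to get a neighborhood $U$ with $K\subset M\setminus\overline{I^{+}(q)}$ for all $q\in U$, and derive a contradiction with $x_n\ll_M y_n$ for large $n$. Your explicit justification that closedness of $J^{+}(x)$ yields $J^{+}(x)=\overline{I^{+}(x)}$ is a point the paper leaves implicit, and your alternative push-up route is a reasonable variant, but the core proof is the same.
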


\begin{proof}
	If $x_n\ll_M y_n$ for all $n$ and $x_n\to x$ and $y_n\to y$ as $n\to \infty$, we just need to prove that $y\in J^+(x)= \overline{I^+(x)}$ by Proposition \ref{closedprop}. Suppose that $y\notin \overline{I^+(x)}$, i.e., $y\in  M\backslash \overline{I^+(x)}$. Then there exists a compact set $K$ containing $y$ such that $K\subset M\backslash \overline{I^+(x)}$. Since $y_n\to y$ as $n\to \infty$, there is a $N_1>0$ such that $y_n\in K$ for all $n>N_1$. On the other hand, $I^+$ is outer continuous at $x$, then there exists a neighborhood $U$ of $x$ such that for any $z\in U$, $K\subset M\backslash \overline{I^+(z)}$. Since $x_n\to x$ as $n\to \infty$, there is a $N_2>0$ such that $x_n\in U$ and $K\subset M\backslash \overline{I^+(x_n)}$ for all $n>N_2$. Let $N=\max \{N_1,N_2\}$. If $n>N$, then $x_n\in U$ and $y_n\in K$ with $y_n\in I^+(x_n)$, which is a contradiction. Thus, $y\in J^+(x)= \overline{I^+(x)}$.
\end{proof}

\begin{prop}\label{reflecting}
	The following conditions are equivalent.
	\begin{enumerate}[{\rm (A)}]
		\item
		For all $p$ and $q$ in $M$, $I^+(p)\subset I^+(q)$ if and only if $I^-(q)\subset I^-(p)$;
		
		\item
		For all $p$ and $q$ in $M$, $p\in \overline{J^+(q)}$ if and only if $q\in \overline{J^-(p)}$.
	\end{enumerate}
\end{prop}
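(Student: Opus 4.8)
The plan is to reduce both conditions to a single statement about inclusions of chronological sets, using Proposition \ref{closedprop} as the only real input. First I would record the elementary fact that $\overline{J^{\pm}(p)}=\overline{I^{\pm}(p)}$ for every $p\in M$: indeed, Proposition \ref{closedprop} gives $I^{\pm}(p)\subset J^{\pm}(p)\subset \overline{I^{\pm}(p)}$, and taking closures of this chain (using that $\overline{I^{\pm}(p)}$ is already closed) sandwiches $\overline{J^{\pm}(p)}$ between $\overline{I^{\pm}(p)}$ and itself. Hence the sets appearing in condition (B) are exactly the closures of the chronological cones.

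Next I would invoke the membership characterization $\overline{I^{\pm}(p)}=\{q\in M: I^{\pm}(q)\subset I^{\pm}(p)\}$ from Proposition \ref{closedprop}. Applying it to the ``$+$'' case with $q$ in place of $p$ yields
\[
p\in \overline{J^+(q)}=\overline{I^+(q)} \iff I^+(p)\subset I^+(q),
\]
and applying it to the ``$-$'' case with $p$ in place of $p$ yields
\[
q\in \overline{J^-(p)}=\overline{I^-(p)} \iff I^-(q)\subset I^-(p).
\]
Substituting these two equivalences into condition (B), one sees that (B) is literally the assertion that $I^+(p)\subset I^+(q)$ holds if and only if $I^-(q)\subset I^-(p)$, for all $p,q\in M$, which is precisely condition (A). This proves (A)$\iff$(B).

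There is essentially no genuine obstacle here; the statement is a formal consequence of Proposition \ref{closedprop}. The only point requiring a little care is bookkeeping the direction of the inclusions in the dual (past) case: the characterization of $\overline{I^-(p)}$ flips the roles of the two points relative to the future case, and it is exactly this flip that makes the ``if and only if'' in (A) line up correctly with the two separate implications packaged in (B). I would therefore write the two displayed equivalences explicitly rather than appealing to ``duality'' in passing, to make the matching transparent to the reader.
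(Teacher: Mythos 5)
Your proposal is correct and follows exactly the route the paper intends: the paper's proof simply says the equivalence is "immediately obtained" from Proposition \ref{closedprop} (citing Hawking--Sachs), and your argument spells out precisely that deduction, using $\overline{J^{\pm}}=\overline{I^{\pm}}$ together with the characterization $\overline{I^{\pm}(p)}=\{q\in M: I^{\pm}(q)\subset I^{\pm}(p)\}$ to translate (B) literally into (A).
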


\begin{proof}
	The conclusion can be immediately obtained with the Proposition \ref{closedprop}. See also \cite[Proposition 1.3]{Hawkingandsachs74}.
\end{proof}

\begin{prop}\label{outercontinuous}
	If any one of the equivalent conditions in Proposition \ref{reflecting} holds, then $I^{\pm}(p)$ are outer continuous for any $p\in M$.
\end{prop}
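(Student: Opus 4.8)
The plan is to argue by contradiction, using three facts already established: the identity $\overline{I^{\pm}(p)}=\{q\in M: I^{\pm}(q)\subset I^{\pm}(p)\}$ of Proposition \ref{closedprop}; the openness of the chronological sets $I^{\pm}(p)$ (a consequence of Proposition \ref{strong order open property}); and the reflecting condition of Proposition \ref{reflecting}, whose job is to turn a containment of chronological pasts into a containment of chronological futures.

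First I would suppose that $I^{+}$ is not outer continuous at some $p\in M$, so that there is a compact set $K\subset M\setminus\overline{I^{+}(p)}$ for which no neighbourhood of $p$ works: every neighbourhood of $p$ contains a point $q$ with $K\cap\overline{I^{+}(q)}\neq\emptyset$. Running through a shrinking neighbourhood basis at $p$ (available since $M$ is metric) and invoking compactness of $K$, I would extract sequences $q_{n}\to p$ and $k_{n}\to k\in K$ with $k_{n}\in\overline{I^{+}(q_{n})}$ for every $n$. Since $k\in K$ lies outside $\overline{I^{+}(p)}$, Proposition \ref{closedprop} yields $I^{+}(k)\not\subset I^{+}(p)$, so I may fix a point $z$ with $k\ll_{M}z$ but $z\notin I^{+}(p)$. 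The next move is the standard midpoint trick: splitting a timelike curve from $k$ to $z$ at its midpoint produces $z'$ with $k\ll_{M}z'\ll_{M}z$, and then $I^{-}(z')$ is an \emph{open} neighbourhood of $k$, so $k_{n}\in I^{-}(z')$, i.e.\ $z'\in I^{+}(k_{n})$, for all large $n$.

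Now I would feed this into the two hypotheses. From $k_{n}\in\overline{I^{+}(q_{n})}$ and Proposition \ref{closedprop} we get $I^{+}(k_{n})\subset I^{+}(q_{n})$, hence $z'\in I^{+}(q_{n})$, that is $q_{n}\in I^{-}(z')$. Letting $n\to\infty$ gives $p\in\overline{I^{-}(z')}$, and Proposition \ref{closedprop} then supplies $I^{-}(p)\subset I^{-}(z')$. Applying Proposition \ref{reflecting}(A) to the pair $(z',p)$ converts $I^{-}(p)\subset I^{-}(z')$ into $I^{+}(z')\subset I^{+}(p)$; since $z\in I^{+}(z')$, this forces $z\in I^{+}(p)$, contradicting the choice of $z$. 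Hence $I^{+}(p)$ is outer continuous, and the same argument with ``future'' and ``past'' interchanged (again appealing to Propositions \ref{closedprop} and \ref{reflecting}) gives outer continuity of $I^{-}(p)$.

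The step I expect to be the real obstacle --- and the only place where the reflecting hypothesis is genuinely used --- is the passage from $q_{n}\to p$ with $q_{n}\ll_{M}z'$ to $I^{+}(z')\subset I^{+}(p)$. Because ``$\leq_{M}$'' need not be a closed order, the limit by itself only gives $p\in\overline{I^{-}(z')}$, i.e.\ $I^{-}(p)\subset I^{-}(z')$, which says nothing a priori about $I^{+}(p)$; reflectiveness is precisely what trades this past-inclusion for the future-inclusion needed to contradict $z\notin I^{+}(p)$. The rest --- extracting the subsequences, the midpoint construction, and the bookkeeping with $\overline{I^{\pm}}$ --- I expect to be routine.
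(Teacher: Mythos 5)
Your proof is correct: each step is justified by results available in the paper (the characterization $\overline{I^{\pm}(p)}=\{q\in M: I^{\pm}(q)\subset I^{\pm}(p)\}$ of Proposition \ref{closedprop}, the openness of $I^{\pm}$ from Proposition \ref{strong order open property}, transitivity of ``$\ll_M$'' for the midpoint trick, and condition (A) of Proposition \ref{reflecting}), and the final application of (A) to the inclusion $I^{-}(p)\subset I^{-}(z')$ does yield $I^{+}(z')\subset I^{+}(p)$ and hence the contradiction $z\in I^{+}(p)$. The difference from the paper is organizational rather than substantive. The paper argues directly: it first proves the pointwise assertion that for every $v\notin\overline{I^{-}(p)}$ there exists $w\in I^{+}(p)$ with $v\notin\overline{I^{-}(w)}$ (this is where the midpoint trick and the reflecting condition enter, the latter being used twice --- once as (A) and once, via Proposition \ref{closedprop}, effectively as (B)), and then covers the compact set $K$ by finitely many sets $M\setminus\overline{I^{-}(w_i)}$, so that the required neighbourhood of $p$ is produced explicitly as $\bigcap_{i}I^{-}(w_i)$. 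You instead run a proof by contradiction with sequences, which uses first countability (available since $(M,d)$ is metric) and sequential compactness of $K$, and you need the reflecting condition only once. Your version is a bit shorter and localizes the use of (A) exactly where you say it is needed; the paper's version is constructive (it exhibits the neighbourhood) and works without appealing to metrizability. Both arguments are sound.
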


\begin{proof}
	We first assert that for $v\in M\backslash \overline{I^-(p)}$, there exists $w\in I^+(p)$ such that $v\in M\backslash \overline{I^-(w)}$.
	Before proving this assertion, we will show how it implies this Proposition. Let $K\subset M\backslash \overline{I^-(p)}$ be compact. This assertion implies that $\{M\backslash \overline{I^-(w)}: w\in I^+(p)\}$ is an open covering of $K\subset M\backslash \overline{I^-(p)}$. Choose a finite subcovering $\{M\backslash \overline{I^-(w_i)}:w_i\in I^+(p), i=1,2,\cdots,n\}$ determined by $w_1,w_2,\cdots,w_n$ and $U=\bigcap^{n}_{i=1}I^-(w_i)$. Then $U$ is a neighborhood of $p$ such that for any $u\in U$, $K\subset M\backslash \overline{I^-(u)}$. In fact, since $w_i\in I^+(p)$, then $p\in I^-(w_i)$, $i=1,2,\cdots,n$. So, $p\in \bigcap^{n}_{i=1}I^-(w_i)=U$. For any  $u\in U$, $u\in I^-(w_i)$ for $i=1,2,\cdots,n$. Then $I^-(u)\subset I^-(w_i)$ for $i=1,2,\cdots,n$. Thus, $M\backslash \overline{I^-(w_i)} \subset M\backslash \overline{I^-(u)}$ for $i=1,2,\cdots,n$. So, $K\subset M\backslash \overline{I^-(u)}$. Then $I^-(p)$ is outer continuous.
	
	It remains to prove the assertion. Suppose that $v\in \overline{I^-(w)}$ for any $w\in I^+(p)$, then we will get a contradiction. Since $v\in \overline{I^-(w)}$, then $I^-(v)\subset I^-(w)$ by Proposition \ref{closedprop}. Since condition $(A)$ of Proposition \ref{reflecting} holds, then $I^+(w)\subset I^+(v)$ for all $w\in I^+(p)$. On the other hand, for any $y\in I^+(p)$, i.e., $p\ll_M y$, there is a strictly conal curve $\gamma : [0,1] \mapsto M$ such that $\gamma(0)=p$ and $\gamma(1)=y$. Let $x=\gamma(\frac{1}{2})$, then $p\ll_M x\ll_M y$. Thus, $y\in I^+(x)$ and $x\in I^+(p)$. Since $I^+(w)\subset I^+(v)$ for all $w\in I^+(p)$, then $I^+(x)\subset I^+(v)$. So, $y\in I^+(v)$ for any $y\in I^+(p)$. Then $I^+(p)\subset I^+(v)$. By Proposition \ref{closedprop}, $p\in \overline{I^+(v)}$. Thus, $v\in \overline{I^-(p)}$ by Proposition \ref{reflecting}, which is a contradiction with $v\in M\backslash \overline{I^-(p)}$.
\end{proof}

\begin{rmk}
	If for all $p$ and $q$ in $M$, either $I^+(p)=I^+(q)$ or $I^-(p)=I^-(q)$ implies $p=q$, then that $I^{\pm}$ are outer continuous is equivalent to any one of the conditions in Proposition \ref{reflecting}. See \cite[Theorem 2.1]{Hawkingandsachs74}.
\end{rmk}

\begin{lem}\label{causallysimple}
	If $J^{\pm}(p)$ are closed for all $p\in M$, then $I^{\pm}(p)$ are outer continuous.
\end{lem}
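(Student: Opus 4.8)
The plan is to reduce everything to Proposition \ref{outercontinuous}: since that proposition already proves outer continuity of $I^{\pm}(p)$ from any one of the equivalent conditions in Proposition \ref{reflecting}, it suffices to check that one of those conditions holds whenever all the sets $J^{\pm}(p)$ are closed.

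The first step I would carry out is to identify $J^{\pm}(p)$ with $\overline{I^{\pm}(p)}$ under the hypothesis. By Proposition \ref{closedprop} we have $I^{\pm}(p)\subset J^{\pm}(p)\subset \overline{I^{\pm}(p)}$; taking closures in the first inclusion gives $\overline{I^{\pm}(p)}\subset \overline{J^{\pm}(p)}=J^{\pm}(p)$ because $J^{\pm}(p)$ is closed, and combined with $J^{\pm}(p)\subset \overline{I^{\pm}(p)}$ this yields $J^{\pm}(p)=\overline{I^{\pm}(p)}$ for every $p$.

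Next I would verify condition (B) of Proposition \ref{reflecting}: $p\in \overline{J^+(q)}$ if and only if $q\in \overline{J^-(p)}$. Since $J^{\pm}$ are closed, $\overline{J^+(q)}=J^+(q)$ and $\overline{J^-(p)}=J^-(p)$, so the condition reads $p\in J^+(q)\iff q\in J^-(p)$; but by definition $p\in J^+(q)$ means $q\leq_M p$ and $q\in J^-(p)$ also means $q\leq_M p$, so the equivalence is a tautology. (One could equally check condition (A): by Proposition \ref{closedprop} and the first step, $I^+(p)\subset I^+(q)$ iff $p\in \overline{I^+(q)}=J^+(q)$, and $I^-(q)\subset I^-(p)$ iff $q\in \overline{I^-(p)}=J^-(p)$, and both are the statement $q\leq_M p$.) Then Proposition \ref{outercontinuous} immediately gives that $I^{\pm}(p)$ is outer continuous for all $p\in M$.

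There is no serious obstacle in this argument; the only nontrivial move is recognizing that the statement should be routed through Propositions \ref{reflecting} and \ref{outercontinuous}, together with the identification $J^{\pm}(p)=\overline{I^{\pm}(p)}$ coming from Proposition \ref{closedprop}. Once that identification is made, the reflecting condition collapses to a triviality and all the real work has already been done in the proof of Proposition \ref{outercontinuous}.
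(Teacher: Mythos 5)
Your proof is correct and follows the same route as the paper: one observes that closedness of $J^{\pm}$ makes condition (B) of Proposition \ref{reflecting} hold (trivially, since $p\in J^+(q)$ and $q\in J^-(p)$ both just say $q\leq_M p$), and then invokes Proposition \ref{outercontinuous}. Your additional identification $J^{\pm}(p)=\overline{I^{\pm}(p)}$ is a correct but optional elaboration of what the paper leaves implicit.
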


\begin{proof}
	If $J^{\pm}(p)$ are closed for all $p\in M$, then condition (B) of Proposition \ref{reflecting} holds. So, $I^{\pm}(p)$ are outer continuous by Proposition \ref{outercontinuous}.
\end{proof}

\begin{cor}\label{quasiclosedcor}
	If $J^{\pm}(p)$ are closed for all $p\in M$, then the conal order ``$\leq_M$" is quasi-closed.
\end{cor}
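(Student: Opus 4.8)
The plan is to deduce this directly from the two results immediately preceding it, so no new argument is needed. First I would invoke Lemma \ref{causallysimple}: by hypothesis $J^{\pm}(p)$ is closed for every $p\in M$, and the lemma converts this into the outer continuity of $I^{\pm}(p)$ at every point. (Internally, that lemma itself routes through condition (B) of Proposition \ref{reflecting}, which is exactly the statement that $p\in\overline{J^{+}(q)}$ iff $q\in\overline{J^{-}(p)}$; closedness of the $J^{\pm}$ makes this read $p\in J^{+}(q)\Leftrightarrow q\in J^{-}(p)$, which holds trivially, and then Proposition \ref{outercontinuous} delivers outer continuity.)

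Next I would feed both facts into Theorem \ref{quasiclosed}. That theorem requires, for each $p\in M$, that $J^{\pm}(p)$ be closed and that $I^{\pm}(p)$ be outer continuous; the first is the standing hypothesis of the corollary and the second was just obtained from Lemma \ref{causallysimple}. Its conclusion is precisely that the conal order $\leq_M$ is quasi-closed, which is what we want.

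I do not anticipate any obstacle here: the corollary is a bookkeeping combination of Lemma \ref{causallysimple} and Theorem \ref{quasiclosed}, and the only thing to check is that the hypotheses line up, which they do verbatim. The substantive work — the push-up lemma (Proposition \ref{transitive}), the characterization $\overline{I^{\pm}(p)}=\{q:I^{\pm}(q)\subset I^{\pm}(p)\}$ (Proposition \ref{closedprop}), and the covering arguments proving inner/outer continuity — has already been carried out in the cited statements, so the proof of the corollary is a single sentence chaining the two.

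\begin{proof}
If $J^{\pm}(p)$ are closed for all $p\in M$, then $I^{\pm}(p)$ are outer continuous for all $p\in M$ by Lemma \ref{causallysimple}. Hence the hypotheses of Theorem \ref{quasiclosed} are met, and the conal order ``$\leq_M$'' is quasi-closed.
\end{proof}
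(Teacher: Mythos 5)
Your proof is correct and matches the paper's own argument exactly: the paper likewise applies Lemma \ref{causallysimple} to obtain outer continuity of $I^{\pm}(p)$ from the closedness of $J^{\pm}(p)$, and then concludes via Theorem \ref{quasiclosed}. Your parenthetical explanation of why condition (B) of Proposition \ref{reflecting} holds trivially under closedness is also consistent with the paper's reasoning.
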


\begin{proof}
	If $J^{\pm}(p)$ are closed for all $p\in M$, then $I^{\pm}(p)$ are outer continuous by Lemma \ref{causallysimple}. Thus, the conal order ``$\leq_M$" is quasi-closed by Theorem \ref{quasiclosed}.
\end{proof}

%\begin{prop}\label{globallyhyperbolic}
%	For each pair points $p,q\in M$, $J^+(p)\cap J^-(q)$ is compact, then $J^{\pm}$ are closed.
%\end{prop}	

%\begin{proof}
%	Suppose $y\in \overline{J^+(x)}-J^+(x)$ for some $x\in M$. Since $y\in \overline{J^+(x)}$, there exists a sequence $\{y_n\} \subset J^+(x)$ such that $y_n \to y$ as $n\to \infty$. For any $r\in I^+(y)$, then $y\in I^-(r)$. Thus, there is a neighborhood $U$ of $y$ such that $U\subset I^-(r)$. Then there is a $N>0$ such that for $n>N$, $y_n\in U \subset I^-(r)\subset J^-(r)$. So, $y_n\in J^+(x)\cap J^-(r)$ for $n>N$. On the other hand, $y\notin J^+(x)$, then $y\notin J^+(x)\cap J^-(r)$. Since $y_n\in J^+(x)\cap J^-(r)$ for $n>N$ and $y_n\to y$ as $n\to \infty$, then $y\in \overline{J^+(x)\cap J^-(r)} - J^+(x)\cap J^-(r)$. Since $J^+(x)\cap J^-(r)$ is compact, then $J^+(x)\cap J^-(r)$ is closed. Thus, $y\in \emptyset$. Then $J^{\pm}$ are closed.
%\end{proof}	

\subsection{Globally orderable homogeneous spaces}

Let $G$ be a connected Lie group and $M$ be a smooth manifold. A \textit{left action} of Lie group $G$ on manifold $M$ is a smooth map $\theta : G \times M \to M$ satisfying $\theta(e,x)=x$ and $\theta(g_1g_2,x)=\theta(g_1,\theta(g_2,x))$ for all $g_1,g_2\in G$, $x\in M$, where $e$ is the identity element in $G$. We write $g\cdot x$ or $gx$ for $\theta(g,x)$. The action is said to be \textit{transitive} if for every pair of points $x,y\in M$, there exists $g\in G$ such that $g\cdot x=y$. For each $x\in M$, the \textit{isotropy group} of $x$, denoted by $G_x$, is the set $G_x=\{g\in G : g\cdot x=x\}$.

A smooth manifold endowed with a transitive smooth action by a Lie group $G$ is called a \textit{homogeneous G-space} (or a \textit{homogeneous space}).

Let $G$ be a Lie group and $H\subset G$ be a closed subgroup. The \textit{left coset space} $G/H=\{gH : g\in G\}$ is a smooth manifold of dimension ($\text{dim} G-\text{dim}H$), and the left action of $G$ on $G/H$ is given by $g_1\cdot (g_2H)=(g_1g_2)H$. Hence, $G/H$ is a homogeneous space (see \cite[Theorem 21.17]{LeeGTM218}).

Let $G$ be a Lie group and $M$ be a homogeneous G-space. If $p$ is any point of $M$, then the isotropy group $G_p$ is a cloed subgroup of $G$, and the $F : G/G_p \to M$ defined by $F(gG_p)=g\cdot p$ is an equivariant diffeomorphism (see \cite[Theorem 21.18]{LeeGTM218}). Because of this equivariant diffeomorphism, we can define a homogeneous space to be a coset space of the form $G/H$, where $G$ is a Lie group and $H$ is a closed subgroup of $G$.

Let $G$ be a Lie group and fix $a\in G$. Define the \textit{left translation} map $L_a : G \to G$ by $L_a(g)=ag$. The left translation map is diffeomorphism since it is smooth with smooth inverse. The inverse of $L_a$ is clearly the map $L_{a^{-1}}$. The diffeomorphism $L_g$ induces a vector space isomorphism $dL_g\vert_e : \mathfrak{g}=T_eG \to T_gG$, where $\mathfrak{g}$ is the Lie algebra of $G$ and $e$ is the identity element in $G$.
%A Riemannian metric $(\cdot,\cdot)_g$ on a Lie group $G$ is said to be \textit{left invariant} if $(u,v)_g=(dL_a\vert_gu,dL_a\vert_gv)_{L_a(g)}$ for all $a,g\in G$ and $u,v\in T_gG$.

Let $M=G/H$ be a homogeneous space and the natural projection $\pi : G\to G/H$, $\pi(g)=gH$ be a submersion. For each $a\in G$, define the \textit{left translation} $\tau_a : G/H \to G/H$ by $\tau_a(gH)=agH$. Then the left translations $\tau_g$ are related to the left translations $L_g$ on the Lie group $G$ by $\pi \circ L_g = \tau_g \circ \pi$ for each $g\in G$. Let $\mathfrak{h}$ be the Lie algebra of $H$ and $o=\pi(e)=eH$. The differential $d\pi\vert_e : T_eG\to T_oM$ is a vector space homomorphism with $\ker d\pi\vert_e=\mathfrak{h}$, we obtain that $\mathfrak{g}/\mathfrak{h}\cong T_oM$, where $\mathfrak{g}/\mathfrak{h}$ is the set of cosets $X+\mathfrak{h}=\{X+Y:Y\in \mathfrak{h}\}$ for $X\in \mathfrak{g}$ (see \cite[P488]{HilgertHofmannLawson} or \cite[P71]{Arvanitoyeorgos}).

A \textit{wedge} $W$ is a closed and convex subset of a vector space that is invariant by scaling with real positive numbers (see e.g., \cite{HilgertHofmannLawson} and \cite{KLS89}). Thus, a convex cone is a wedge in a vector space. A \textit{wedge field} $W_M$ on a manifold $M$ assigns to each point $x\in M$ a wedge $W_M(x)$ in the tangent space $T_xM$.

Let $\Phi : G\times M \to M$ be any left group action on $M$ such that each of the maps $\tau_g : M\to M$ defined by $\tau_g(x)=\Phi(g,x)=g\cdot x$ forms a diffeomorphism of $M$, Then a wedge field $W_M$ is said to be \textit{G-invariant} or \textit{homogeneous} if $d\tau_g \vert_x (W_M(x))=W_M(g\cdot x)$ for all $g\in G$ and $x\in M$.

\begin{lem}\label{invariantconefieldfromwedge}
Let $H$ be a closed subgroup of a Lie group $G$ and $W$ a wedge in $\mathfrak{g}$ such that (i) $W\cap -W=\mathfrak{h}$, and (ii) $\mathrm{Ad}(H)(W)=W$, where $\mathrm{Ad}$ is the adjoint representation of $G$. Define $W_G$ and $C_M$ by
\begin{equation*}
W_G(g)=dL_g\vert_e W, \ \ C_M(x)=d\tau_g\vert_o C,
\end{equation*}
where $M=G/H$, $e$ is the identity element in $G$, $o=eH$ is the base point in $M$, $x=gH$, and $C$ is the convex cone in $T_oM$ obtained as the projection of $W$ onto $\mathfrak{g}/\mathfrak{h}$. Then, $W_G$ is an invariant wedge field on $G$ and $C_M$ is a well-defined homogeneous or G-invariant cone field on $M$. Moreover, for each $g\in G$,
\begin{equation*}
d\pi \vert_g (W_G)= C_M(\pi(g)),
\end{equation*}
where $\pi : G\to M$ is the canonical projection $\pi (g)=gH$.
\end{lem}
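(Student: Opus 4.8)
The plan is to separate the statement into three assertions---that $W_G$ is a left-invariant wedge field on $G$, that $C_M$ is a well-defined $G$-invariant cone field on $M$, and that $d\pi|_g(W_G(g))=C_M(\pi(g))$---and to obtain each from the chain rule together with the two equivariance relations $\pi\circ L_g=\tau_g\circ\pi$ and, for $h\in H$, $\tau_h\circ\pi=\pi\circ c_h$, where $c_h(g)=hgh^{-1}$. Throughout I would identify $T_oM$ with $\mathfrak g/\mathfrak h$ via the quotient map $p:=d\pi|_e$, whose kernel is $\mathfrak h$, and write $\overline L$ for the map on $\mathfrak g/\mathfrak h$ induced by a linear map $L$ of $\mathfrak g$ that preserves $\mathfrak h$.

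First, $W_G(g)=dL_g|_e W$ is a wedge since $dL_g|_e$ is a linear isomorphism and $W$ is a wedge, and left-invariance follows at once from $dL_a|_g\circ dL_g|_e=dL_{ag}|_e$, whence $dL_a|_g(W_G(g))=W_G(ag)$. The less obvious preliminary fact is that $C:=p(W)$ is genuinely a cone in $T_oM$, i.e.\ closed, convex, and pointed. Here I would invoke hypothesis (i): since $\mathfrak h=W\cap -W\subseteq W$ and $W$ is convex and scaling-invariant, $W$ is saturated by $\mathfrak h$, i.e.\ $W+\mathfrak h=W$, so $\mathfrak h$ is exactly the edge of $W$. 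Fixing a vector-space complement $\mathfrak g=\mathfrak h\oplus\mathfrak m$, the set $W\cap\mathfrak m$ is a closed convex cone in $\mathfrak m$ that is pointed because $(W\cap\mathfrak m)\cap(-(W\cap\mathfrak m))\subseteq(W\cap -W)\cap\mathfrak m=\mathfrak h\cap\mathfrak m=\{0\}$; and $W=W+\mathfrak h$ gives $p(W)=p(W\cap\mathfrak m)$, which is the image of a closed pointed cone under the isomorphism $p|_{\mathfrak m}$, hence again closed, convex, and pointed. I expect this to be the main obstacle, because the image of a closed convex set under a linear projection need not be closed in general; it is precisely the hypothesis $W\cap -W=\mathfrak h$ that rescues closedness and pointedness here.

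Next I would verify that $C_M(x)=d\tau_g|_o C$ is independent of the representative $g$ of $x=gH$, which is where hypothesis (ii) enters. If $g'=gh$ with $h\in H$, then $\tau_{gh}=\tau_g\circ\tau_h$ and $\tau_h(o)=o$, so it suffices to show $d\tau_h|_o C=C$. Differentiating $\tau_h\circ\pi=\pi\circ c_h$ at $e$ gives $d\tau_h|_o\circ p=p\circ\mathrm{Ad}(h)$; since $\mathrm{Ad}(h)\mathfrak h=\mathfrak h$, this identity together with the surjectivity of $p$ shows that, under $T_oM\cong\mathfrak g/\mathfrak h$, the map $d\tau_h|_o$ equals $\overline{\mathrm{Ad}(h)}$, so $d\tau_h|_o C=p(\mathrm{Ad}(h)W)=p(W)=C$ by (ii). Consequently each $C_M(x)$ is a closed convex pointed cone, being the image of $C$ under the linear isomorphism $d\tau_g|_o$, and $C_M$ is $G$-invariant: for $a\in G$ and $x=gH$ the relation $\tau_a\circ\tau_g=\tau_{ag}$ yields $d\tau_a|_x(C_M(x))=d\tau_{ag}|_o C=C_M(ax)$. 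Continuity---indeed smoothness---of $x\mapsto C_M(x)$ is inherited from smoothness of the $G$-action.

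Finally, the projection identity is immediate from $\pi\circ L_g=\tau_g\circ\pi$: differentiating at $e$ gives $d\pi|_g\circ dL_g|_e=d\tau_g|_o\circ d\pi|_e$, hence $d\pi|_g(W_G(g))=d\tau_g|_o(p(W))=d\tau_g|_o C=C_M(\pi(g))$. Assembling these four steps completes the proof; apart from the cone property of $C$, every step is a routine application of the chain rule to the equivariance relations $\pi\circ L_g=\tau_g\circ\pi$ and $\tau_h\circ\pi=\pi\circ c_h$.
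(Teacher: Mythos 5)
Your proposal is correct, but there is nothing in the paper to compare it against line by line: the paper does not prove this lemma at all, it simply cites Lemma VI.1.5 of Hilgert--Hofmann--Lawson. Your argument is a sound, self-contained verification of exactly the kind the cited source gives: the chain rule applied to the equivariance relations $\pi\circ L_g=\tau_g\circ\pi$ and $\tau_h\circ\pi=\pi\circ c_h$ disposes of left-invariance of $W_G$, the well-definedness and $G$-invariance of $C_M$ (via $d\tau_h|_o\circ d\pi|_e=d\pi|_e\circ\mathrm{Ad}(h)$ and hypothesis (ii)), and the identity $d\pi|_g(W_G(g))=C_M(\pi(g))$. You also correctly isolate the one genuinely non-routine point, namely that $C=d\pi|_e(W)$ is a \emph{closed, pointed} convex cone in $T_oM$: since a wedge is closed under addition, hypothesis (i) gives $W+\mathfrak h=W$, so $d\pi|_e(W)=d\pi|_e(W\cap\mathfrak m)$ for any complement $\mathfrak m$ of $\mathfrak h$, and $W\cap\mathfrak m$ is a closed pointed cone carried isomorphically by $d\pi|_e|_{\mathfrak m}$; this is precisely where the edge condition $W\cap-W=\mathfrak h$ is needed, as the image of a closed convex set under a projection need not be closed in general. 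The only cosmetic remark is that your closing sentence about smoothness of $x\mapsto C_M(x)$ is not part of the lemma (the paper handles continuity separately, in Proposition \ref{homogeneousconecontinuous} via Lawson), so you may omit it or keep it as an aside. What your route buys is a proof readable without consulting the Lie-semigroup literature; what the paper's citation buys is brevity and the full generality of the wedge-field statement in the original reference.
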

\begin{proof}
See \cite[Lemma VI.1.5]{HilgertHofmannLawson}.
\end{proof}

\begin{prop}\label{homogeneousconecontinuous}
The homogeneous cone field on a homogeneous space is continuous and admits $C^{\infty}$ sections.
\end{prop}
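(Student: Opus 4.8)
The plan is to base everything on the explicit description of the homogeneous cone field furnished by Lemma~\ref{invariantconefieldfromwedge}: writing $x=gH$, one has $C_M(x)=d\tau_g\vert_o\,C$, where $o=eH$, $\tau_g$ is left translation by $g$, and $C\subset T_oM\cong\mathfrak{g}/\mathfrak{h}$ is a fixed closed convex cone. Both assertions then reduce to the smoothness of the $G$-action $\theta$ together with the fact that $\pi\colon G\to M=G/H$ is a submersion (indeed a principal $H$-bundle), so that around any point of $M$ there is a smooth local section $\sigma$ of $\pi$.

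For continuity, fix $x_0\in M$, a smooth chart $\Phi$ at $x_0$, and a smooth local section $\sigma\colon U'\to G$ of $\pi$ defined near $x_0$. On $U'$ we have $C_M(y)=d\tau_{\sigma(y)}\vert_o\,C$, so in the chart the cone field is represented by $C^\Phi_M(y)=A(y)(C)$, where $A(y):=\mathrm{pr}\circ d\Phi(y)\circ d\tau_{\sigma(y)}\vert_o\colon T_oM\to\mathbb{R}^n$ is a linear isomorphism depending smoothly on $y$ (a composition of the smooth maps $y\mapsto\sigma(y)$, $g\mapsto d\tau_g\vert_o$ and $y\mapsto d\Phi(y)$). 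Thus, locally, $C^\Phi_M$ is the image of one fixed cone under a continuously varying invertible linear map, and continuity becomes elementary. For upper semicontinuity: if $C'$ surrounds $C^\Phi_M(x_0)=A(x_0)(C)$, then $A(x_0)$ carries the compact set $S:=\{v\in C:\abs{v}_o=1\}$ (for a fixed norm on $T_oM$) into the open set $\mathrm{Int}\,C'$; by uniform continuity of $(y,v)\mapsto A(y)v$ on a compact neighbourhood of $x_0$ there is a neighbourhood $W$ with $A(y)(S)\subset\mathrm{Int}\,C'$ for $y\in W$, whence $A(y)(C)=\mathbb{R}_{\ge 0}\cdot A(y)(S)\subset C'$. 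For lower semicontinuity: if $N$ is open with $N\cap A(x_0)(C)\ne\emptyset$, choose $v\in C$ with $A(x_0)v\in N$ (take $v=0$ when $0\in N$); since $y\mapsto A(y)v$ is continuous, $A(y)v\in N\cap C^\Phi_M(y)$ for all $y$ in a neighbourhood of $x_0$.

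For the $C^\infty$-section, fix $v_0\in C$ with $v_0\ne 0$. The naive candidate $x=gH\mapsto d\tau_g\vert_o\,v_0$ is \emph{not} well defined, because $g$ is determined by $x$ only up to right translation by $H$ while $d\tau_h\vert_o\,v_0$ need not equal $v_0$; resolving this ambiguity is the only real difficulty. To circumvent it, cover $M$ by the domains $U_\alpha$ of smooth local sections $\sigma_\alpha\colon U_\alpha\to G$ of $\pi$ and set $X_\alpha(x):=d\tau_{\sigma_\alpha(x)}\vert_o\,v_0$, a $C^\infty$ vector field on $U_\alpha$ with $X_\alpha(x)\in C_M(x)\setminus\{0\}$ (since $d\tau_{\sigma_\alpha(x)}\vert_o$ maps $C$ isomorphically onto $C_M(x)$). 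Pick a $C^\infty$ partition of unity $\{\rho_\alpha\}$ subordinate to $\{U_\alpha\}$ and put $X:=\sum_\alpha\rho_\alpha X_\alpha$, a globally defined $C^\infty$ vector field. For each $x$, the value $X(x)$ is a convex combination of vectors of $C_M(x)$, hence lies in $C_M(x)$ by convexity; moreover at least one coefficient $\rho_\alpha(x)$ is positive, and since $C_M(x)$ is pointed a sum of nonzero elements of $C_M(x)$ cannot vanish, so $X(x)\ne 0$. Thus $X$ is a nowhere-vanishing $C^\infty$-section of the cone field. (When $H$ is compact one may alternatively average $v_0$ over $H$ to obtain an $\mathrm{Ad}(H)$-fixed vector in $C$, in which case the naive formula already yields a $G$-invariant $C^\infty$-section; the partition-of-unity construction is what is needed in general, and this well-definedness point is the step I expect to require the most care.)
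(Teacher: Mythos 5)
Your proposal is correct, and it is genuinely more than what the paper does at this point: the paper's ``proof'' is simply a citation of Lawson \cite[Proposition 4.6]{Lawson89}, whereas you supply a self-contained argument. Your route --- use local smooth sections $\sigma_\alpha$ of the principal $H$-bundle $\pi\colon G\to G/H$ to write the cone field locally as the image $A(y)(C)$ of one fixed pointed cone under a smoothly varying linear isomorphism, deduce upper semicontinuity by the compactness argument on the unit sphere of $C$ and lower semicontinuity by continuity of $y\mapsto A(y)v$, and then glue the local sections $X_\alpha(x)=d\tau_{\sigma_\alpha(x)}\vert_o v_0$ by a partition of unity, using convexity for membership in $C_M(x)$ and pointedness for nonvanishing --- is sound and is essentially the standard argument behind Lawson's statement (homogeneity plus local triviality of the bundle). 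You also correctly isolate the one genuine subtlety, namely that the naive formula $gH\mapsto d\tau_g\vert_o v_0$ is well defined only when $v_0$ is fixed by the isotropy representation, and the partition-of-unity construction does dispose of it. Two minor remarks: your construction needs $C\neq\{0\}$ to produce a nonvanishing section (if $C=\{0\}$ the zero field trivially satisfies the paper's definition, and in the paper's standing setting the cone field is solid, so this is vacuous), and what you prove is slightly stronger than what the paper's definition of a $C^{\infty}$-section requires, since the paper does not demand that the section be nowhere zero --- though the nonvanishing version is the useful one (it is what Lemma \ref{smooth section of cone field} exploits to approximate points from below and above). What your approach buys is independence from the external reference; what the citation buys is brevity and the additional structural information contained in Lawson's treatment of homogeneous cone fields.
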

\begin{proof}
See \cite[Proposition 4.6]{Lawson89}.
\end{proof}

\begin{thm}\label{globalorderforhomogeneouscone}
%	{\rm (The Globality Theorem)}
	Let $C_M$ be a homogeneous cone field on $M=G/H$ as described in Lemma \ref{invariantconefieldfromwedge}. If $S=\overline{<\exp W>H}\subset G$, then $S=\pi^{-1}(\overline{\{x\in M : o\leq_M x\}})$ and $G/H$ is globally orderable with respect to $C_M$ if and only if $W=\textbf{L}(S)$, where $\textbf{L}(S)=\{Z\in \mathfrak{g}: \exp (\mathbb{R}^{+}Z) \subset S\}$.
\end{thm}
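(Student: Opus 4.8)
The plan is to transport everything to the group $G$ and read the conclusion off from the theory of Lie semigroups. Write $o=eH$, $T=\langle\exp W\rangle H$ (so $S=\overline{T}$) and $J^{+}(o)=\{x\in M:o\le_{M}x\}$. The dictionary I would set up is: a piecewise smooth curve $g$ in $G$ with $g(t)^{-1}g'(t)\in W$ (a ``conal curve in $G$'') projects, via $d\pi|_{g}(W_{G}(g))=C_{M}(\pi(g))$ from Lemma~\ref{invariantconefieldfromwedge}, to a conal curve in $M$; conversely a conal curve in $M$ through $o$ lifts to such a curve in $G$ through $e$, since $\pi$ is a submersion and the tangent vector can be lifted into $W_{G}$ at each point (a measurable selection along each smooth arc, integrated as a Carathéodory ODE, suffices). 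Concatenating one-parameter pieces $g\mapsto g\exp(tv)$, $v\in W$, $t\ge0$, realizes every element of $\langle\exp W\rangle$ as a point reachable from $e$, and a routine Euler-polygon approximation shows that every conal curve in $G$ from $e$ ends in $\overline{\langle\exp W\rangle}\subseteq S$. Hence $\pi(T)\subseteq J^{+}(o)\subseteq\overline{\pi(T)}$, and in particular $\overline{J^{+}(o)}=\overline{\pi(T)}$.

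To finish the first assertion I would show $\pi(S)$ is closed and equals $\overline{\pi(T)}$. From $\mathrm{Ad}(H)W=W$ one gets $HT=TH=T$, hence $HS=SH=S$ after passing to closures; thus $\pi^{-1}(\pi(S))=S$ is closed, so $\pi(S)$ is closed in the quotient topology. Since $\pi$ is an open map, $\pi^{-1}(\overline{\pi(T)})=\overline{\pi^{-1}(\pi(T))}=\overline{T}=S$, and therefore $S=\pi^{-1}(\overline{\pi(T)})=\pi^{-1}(\overline{J^{+}(o)})$, which is the first claim.

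For the equivalence, note first that $W\subseteq\mathbf{L}(S)$ always, since $\exp(\mathbb{R}^{+}v)\subseteq S$ for $v\in W$. Because $C_{M}$ is $G$-invariant, each $\tau_{g}$ carries conal curves to conal curves, so $\le_{M}$ is antisymmetric iff it is antisymmetric at $o$; and since $\Sigma:=\{g\in G:o\le_{M}gH\}$ is a subsemigroup with $H\Sigma=\Sigma H=\Sigma$ and $T\subseteq\Sigma\subseteq S$ by the sandwich above, antisymmetry at $o$ amounts to $\Sigma\cap\Sigma^{-1}=H$. If $W=\mathbf{L}(S)$, then $\mathbf{L}(S\cap S^{-1})=\mathbf{L}(S)\cap-\mathbf{L}(S)=W\cap-W=\mathfrak{h}$, so the group of units $S\cap S^{-1}$ of the Lie semigroup $S$ is a closed subgroup with Lie algebra $\mathfrak{h}$; using the $\mathrm{Ad}(H)$-invariance and the globality hypothesis one identifies $S\cap S^{-1}$ with $H$, whence $\Sigma\cap\Sigma^{-1}=H$ and $\le_{M}$ is a partial order, while the local cross-section description of a Lie semigroup along its edge (cf.\ \cite{HilgertHofmannLawson,HilgertNeeb93}) provides, at $e$ and hence by translation everywhere, a neighbourhood basis of order-convex sets. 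Conversely, if $W\subsetneq\mathbf{L}(S)$, pick $Z\in\mathbf{L}(S)\setminus W$; since $\ker d\pi|_{e}=\mathfrak{h}\subset W$ we have $\pi^{-1}(C_{M}(o))=W+\mathfrak{h}=W$, so $d\pi|_{e}(Z)\notin C_{M}(o)$, yet $\exp(tZ)\in S$ for $t\ge0$ forces the curve $t\mapsto\exp(tZ)\cdot o$ to lie in $\pi(S)=\overline{J^{+}(o)}$ while being transverse to the cone at $o$; following Lawson's globality analysis \cite[Section~5]{Lawson89}, this ``bulging'' of $\overline{J^{+}(o)}$ in a forbidden direction rules out any neighbourhood basis of order-convex sets at $o$, so $G/H$ is not globally orderable.

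The step I expect to be the main obstacle is this converse direction, together with the rigorous treatment of local order convexity: one must turn the bare transversality of $\exp(tZ)\cdot o\in\overline{J^{+}(o)}$ into a genuine failure of local order convexity (equivalently, of $o$ being a strong point), and, dually, confirm that globality $W=\mathbf{L}(S)$ delivers order-convex neighbourhoods and not merely antisymmetry. This is precisely where Lawson's characterization of globality for Lie wedges is indispensable, and the cleanest route is to establish the statement first for the left-invariant wedge field $W_{G}$ on $G$ and then push it down to $G/H$ through the submersion $\pi$, invoking the equivalences in \cite[Section~5]{Lawson89}, \cite[Theorem~1.6]{Neeb91} and \cite[Section~4.3]{HilgertNeeb93}.
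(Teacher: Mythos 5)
The paper's own ``proof'' of this theorem is nothing more than the citation of Neeb's Theorem 1.6 (equivalently Hilgert--Neeb, Theorem 4.21), and at the decisive points of your argument --- the equivalence between global orderability and $W=\mathbf{L}(S)$, the identification of the unit group with $H$, and the order-convexity analysis --- you defer to exactly those same references, so your route is essentially the paper's. The extra material you supply (the lifting/projection dictionary between conal curves in $M$ and the reachable semigroup in $G$, and the $H$-invariance bookkeeping giving $S=\pi^{-1}\bigl(\overline{\{x\in M:\ o\le_M x\}}\bigr)$) is a reasonable sketch of what those sources actually prove, not a departure from them.
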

\begin{proof}
This Theorem is derived from \cite[Theorem 1.6]{Neeb91} or \cite[Theorem 4.21]{HilgertNeeb93}. Where $\exp : \mathfrak{g} \to G$ is the Lie group exponential map, $\exp (tZ)$ with $Z\in \mathfrak{g}, t\in \mathbb{R}$ is the one-parameter subgroup on $G$ (see e.g., \cite{LeeGTM218}) and $\leq_M$ is the order generated by cone field $C_M$.
\end{proof}

Let $M=G/H$ be a homogeneous space with base-point $o=eH$. A Riemannian metric $(\cdot,\cdot)_p$, $p\in M$, is said to be \textit{G-invariant} or \textit{homogeneous}, if it satisfies $(X,Y)_o=(d\tau_g(X),d\tau_g(Y))_{g\cdot o}$ for each $g\in G$ and $X,Y\in T_oM$.

\begin{prop}\label{homogeneouscomplete}
Let $G$ be a Lie group, $H$ a closed subgroup, then the space $G/H$ is complete in any G-invariant metric.
\end{prop}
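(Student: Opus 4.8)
The plan is to reduce the statement to a standard fact of Riemannian geometry: a homogeneous Riemannian manifold is complete. Since the metric $(\cdot,\cdot)_\bullet$ is $G$-invariant, every left translation $\tau_g : G/H \to G/H$ is an isometry of the Riemannian manifold $M = G/H$, and since the $G$-action is transitive, the isometry group of $M$ acts transitively on $M$. I will first establish a lower bound for the existence time of unit-speed geodesics that is \emph{uniform} over $M$, then run the usual extension argument to obtain geodesic completeness, and finally invoke the Hopf--Rinow theorem to pass to completeness of $(M,d)$.

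For the uniform existence time, fix the base point $o = eH$. By the local existence and smooth dependence theory for the geodesic equation, $\exp_o$ is defined on some open ball $\{ v \in T_o M : \abs{v}_o < \eps \}$ with $\eps > 0$; equivalently, every unit-speed geodesic issuing from $o$ is defined at least on $[0,\eps)$. Given an arbitrary $p \in M$, choose $g \in G$ with $g\cdot o = p$. Then $\tau_g$ is an isometry, so $d\tau_g|_o : T_o M \to T_p M$ is a linear isometry and $\tau_g$ maps geodesics through $o$ to geodesics through $p$ preserving the arclength parametrization; hence every unit-speed geodesic issuing from $p$ is likewise defined at least on $[0,\eps)$, with the same $\eps$, independent of $p$.

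Now let $\gamma$ be any maximal geodesic in $M$. Geodesics have constant speed; if that speed is $0$ then $\gamma$ is constant and defined on all of $\mathbb{R}$, so assume $\gamma$ is parametrized by arclength, with maximal domain of the form $[0,b)$ (the extension to the left is symmetric). Suppose, for contradiction, that $b < \infty$, and pick $t_0 \in (b - \eps, b)$. Let $\sigma$ be the unit-speed geodesic with $\sigma(0) = \gamma(t_0)$ and $\dot\sigma(0) = \dot\gamma(t_0)$; by the previous paragraph $\sigma$ is defined on $[0,\eps)$. By uniqueness of geodesics with given initial data, $\sigma(t) = \gamma(t_0 + t)$ wherever both sides are defined, so $t \mapsto \sigma(t - t_0)$ extends $\gamma$ to $[0, t_0 + \eps)$, an interval strictly larger than $[0,b)$ since $t_0 + \eps > b$ — contradicting maximality. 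Hence $b = \infty$, every maximal geodesic is defined on all of $\mathbb{R}$, and $M$ is geodesically complete. By the Hopf--Rinow theorem, $(M,d)$ is then a complete metric space.

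There is no genuinely hard step here: the only point requiring care is that the existence time $\eps$ may be chosen independently of the point, which is exactly where $G$-invariance of the metric (so each $\tau_g$ is an isometry carrying geodesics to geodesics isometrically) together with transitivity of the $G$-action enters. Alternatively, one may simply invoke the classical statement that homogeneous Riemannian manifolds are complete (see, e.g., \cite{Helgason}).
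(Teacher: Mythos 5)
Your argument is correct, and it is essentially the same route as the paper's: the paper simply cites Helgason for the classical fact that a homogeneous space is complete in any $G$-invariant metric, and your write-up is just the standard proof of that fact (uniform geodesic existence time via transitivity of the isometric $G$-action, extension of maximal geodesics, then Hopf--Rinow), which you also note could be replaced by the citation itself.
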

\begin{proof}
	See \cite[p148]{Helgason}.
%See \cite[p148, p549]{Helgason}. In fact, if $G$ is a Lie group, then $G$ is locally compact.
\end{proof}

A homogeneous space $M=G/H$ is called \textit{reductive} if there exists a subspace $\mathfrak{m}$ of $\mathfrak{g}$ such that $\mathfrak{g}=\mathfrak{h}\oplus \mathfrak{m}$ and $\text{Ad}(h)\mathfrak{m}\subset \mathfrak{m}$ for all $h\in H$. Hence, $\mathfrak{m}\cong T_oM$ (see \cite{Arvanitoyeorgos}). The next Proposition gives a simple description of G-invariant Riemannian metrics on a homogeneous space.

\begin{prop}\label{homogeneousmetric}
Let $M=G/H$ be a reductive homogeneous space. Then there is a one-to-one correspondence between G-invariant Riemannian metrics on $M=G/H$ and $\mathrm{Ad}^{G/H}$-invariant inner products $\langle \cdot,\cdot \rangle$ on $\mathfrak{m}\cong T_oM$; that is, $\langle X,Y \rangle=\langle \mathrm{Ad}^{G/H}(h)X,\mathrm{Ad}^{G/H}(h)Y \rangle$ for all $X,Y\in \mathfrak{m}$, $h\in H$.
\end{prop}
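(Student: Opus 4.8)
The plan is to move back and forth between a $G$-invariant metric on $M=G/H$ and its value at the base point $o=eH$, using the identification $\mathfrak{m}\cong T_oM$ coming from the reductive decomposition $\mathfrak{g}=\mathfrak{h}\oplus\mathfrak{m}$. The structural fact underlying everything is that, under this identification, the isotropy representation $h\mapsto d\tau_h|_o$ of $H$ on $T_oM$ becomes $\mathrm{Ad}^{G/H}$, i.e.\ the restriction of $\mathrm{Ad}(h)$ to $\mathfrak{m}$ (well defined since $\mathrm{Ad}(h)\mathfrak{m}\subseteq\mathfrak{m}$). First I would prove this: for $h\in H$ the conjugation $c_h=L_h\circ R_{h^{-1}}$ satisfies $\pi\circ R_{h^{-1}}=\pi$ (because $h^{-1}\in H$), hence $\pi\circ c_h=\tau_h\circ\pi$; differentiating at $e$ gives $d\pi|_e\circ\mathrm{Ad}(h)=d\tau_h|_o\circ d\pi|_e$, and since $\ker d\pi|_e=\mathfrak{h}$ forces $d\pi|_e$ to restrict to an isomorphism $\mathfrak{m}\to T_oM$ with $\mathrm{Ad}(h)\mathfrak{m}\subseteq\mathfrak{m}$, this says precisely that $d\tau_h|_o$ corresponds to $\mathrm{Ad}^{G/H}(h)$.

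Given a $G$-invariant Riemannian metric $(\cdot,\cdot)$ on $M$, set $\langle X,Y\rangle:=(X,Y)_o$ for $X,Y\in\mathfrak{m}\cong T_oM$. Specializing the invariance identity $(X,Y)_o=(d\tau_g(X),d\tau_g(Y))_{g\cdot o}$ to $g=h\in H$ (so that $h\cdot o=o$) and invoking the previous paragraph yields $\langle X,Y\rangle=\langle\mathrm{Ad}^{G/H}(h)X,\mathrm{Ad}^{G/H}(h)Y\rangle$ for all $h\in H$; thus $\langle\cdot,\cdot\rangle$ is $\mathrm{Ad}^{G/H}$-invariant.

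Conversely, given an $\mathrm{Ad}^{G/H}$-invariant inner product $\langle\cdot,\cdot\rangle$ on $\mathfrak{m}\cong T_oM$, define for $p=gH$ and $u,v\in T_pM$
\[
(u,v)_p:=\bigl\langle\, d\tau_{g^{-1}}|_p(u),\ d\tau_{g^{-1}}|_p(v)\,\bigr\rangle .
\]
The main point to check --- and essentially the only place any work is required --- is that the right-hand side does not depend on the chosen representative: if $g_1H=g_2H=p$, write $g_2=g_1h$ with $h\in H$; then $\tau_{g_2^{-1}}=\tau_{h^{-1}}\circ\tau_{g_1^{-1}}$ and $\tau_{g_1^{-1}}(p)=o$, so $d\tau_{g_2^{-1}}|_p=d\tau_{h^{-1}}|_o\circ d\tau_{g_1^{-1}}|_p$; since $d\tau_{h^{-1}}|_o=\mathrm{Ad}^{G/H}(h^{-1})$ on $\mathfrak{m}$, the $\mathrm{Ad}^{G/H}$-invariance of $\langle\cdot,\cdot\rangle$ cancels the extra factor. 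Smoothness of $p\mapsto(\cdot,\cdot)_p$ follows because $\pi$ admits local smooth sections and $\tau$ is jointly smooth, while $G$-invariance is immediate from $\tau_{g'}\circ\tau_g=\tau_{g'g}$ together with $(\tau_g)^{-1}=\tau_{g^{-1}}$.

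Finally one verifies that the two assignments are mutually inverse: starting from a metric, restricting to $o$ and then re-expanding by the formula above returns the same metric by $G$-invariance, and starting from an $\mathrm{Ad}^{G/H}$-invariant inner product, the associated metric has value $\langle\cdot,\cdot\rangle$ at $o$. This yields the asserted one-to-one correspondence. I expect the only genuinely delicate step to be the bookkeeping with the inverses in the isotropy action (equivalently, getting the direction of $\mathrm{Ad}^{G/H}$ right in the well-definedness argument); everything else is routine and can also be found in \cite{Arvanitoyeorgos}.
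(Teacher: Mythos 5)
Your argument is correct and complete. The paper gives no proof of its own here, merely citing \cite[Proposition 5.1]{Arvanitoyeorgos} and recalling that $\mathrm{Ad}^{G/H}(h)=d\tau_h|_o$ is the isotropy representation; your restriction-at-the-base-point / extension-by-left-translation argument, together with the identification of $d\tau_h|_o$ with $\mathrm{Ad}(h)|_{\mathfrak{m}}$ via $d\pi|_e$ (using $\pi\circ c_h=\tau_h\circ\pi$ and $\ker d\pi|_e=\mathfrak{h}$), is precisely the standard proof that the cited reference supplies, so there is nothing to correct.
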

\begin{proof}
See \cite[Proposition 5.1]{Arvanitoyeorgos}. The homomorphism $\mathrm{Ad}^{G/H} : H \to GL(T_oM)$ such that $\mathrm{Ad}^{G/H}(h)=d\tau_h\vert_o$ is the isotropy representation of the homogeneous space $M=G/H$.
\end{proof}

\subsection{Differential positivity in flat spaces}
Let $M$ be the $n$-dimensional Eucliean space $\mathbb{R}^n$ and $C$ is a closed convex cone of $M$.
There is a partial order ``$\leq$" on $M$ generated by cone $C$ ($x\leq y$ if and only if $y-x\in C$).
It should be pointed out that the order introduced by a closed convex cone is a closed partial order.

We consider the cone field on $M$ defined by $C_M(x)=\{x\}\times C\subset TM=\mathbb{R}^n\times \mathbb{R}^n$. Such cone field is said to be \textit{a contant cone field}. Thus, we can define the order ``$\leq_M$" on $M$ with respect to cone field $C_M$ ($x\leq_M y$ if and only if there exists a conal curve $\gamma : [0,1]\to M$ such that $\gamma(0)=x$, $\gamma(1)=y$ and $\frac{d}{ds}\gamma(s)\in C_M(\gamma(s))=C$).

It is easy to see that the contant cone field on Eucliean space $\mathbb{R}^n$ satisfies the smoothness conditions and invariance condition in the previous sections.

The next proposition implies that the order ``$\leq_M$" generated by contant cone feild $C_M$ agrees with the partial order ``$\leq$" on $M$ generated by cone $C$ (see \cite[Proposition 1.10]{Lawson89}).

\begin{prop}\label{order equivalence}
	Let $M$ be a $n$-dimensional Eucliean space and $C$ is a convex cone in $M$ such that $C$ forms a constant cone field $C_M$. Then for $x,y\in M$, $x\leq y$ if and only if $x\leq_M y$.
\end{prop}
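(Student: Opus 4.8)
The plan is to prove the two implications separately: the forward direction by exhibiting an explicit conal curve, and the reverse direction by an integration argument exploiting that $C$ is a closed convex cone.

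For $x\le y\Rightarrow x\le_M y$: assuming $y-x\in C$, I would take the affine segment $\gamma(s)=x+s(y-x)$ for $s\in[0,1]$. It is smooth, satisfies $\gamma(0)=x$ and $\gamma(1)=y$, and its velocity $\gamma'(s)=y-x$ is the constant vector $y-x\in C=C_M(\gamma(s))$ for every $s$; hence $\gamma$ is a conal curve, so $x\le_M y$ by Definition \ref{conal curve} and the definition of the conal order.

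For $x\le_M y\Rightarrow x\le y$: let $\gamma:[0,1]\to M$ be a conal curve with $\gamma(0)=x$ and $\gamma(1)=y$. Pick a partition $0=t_0<t_1<\cdots<t_k=1$ so that $\gamma$ is smooth on each $[t_{j-1},t_j]$, and write $y_j=\gamma(t_j)$. On each subinterval $y_j-y_{j-1}=\int_{t_{j-1}}^{t_j}\gamma'(s)\,ds$; since $\gamma'(s)\in C$ throughout, every Riemann sum $\sum_i\gamma'(s_i)(s_i-s_{i-1})$ is a nonnegative linear combination of vectors of $C$ and hence lies in $C$ (because $C$ is a convex cone), and the limit stays in $C$ because $C$ is closed. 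Therefore $y_j-y_{j-1}\in C$ for each $j$, and by $C+C\subset C$ the telescoping sum $y-x=\sum_{j=1}^{k}(y_j-y_{j-1})$ lies in $C$, i.e. $x\le y$.

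The argument is elementary and uses none of (H1)--(H3); the only point requiring a little care is the stability of $C$ under integration of a curve whose derivative stays in $C$ -- which is exactly where the three defining properties of a closed convex cone (convexity, scale-invariance, closedness) are used together -- along with the routine bookkeeping of the finitely many non-smooth points and the right-hand-derivative convention in the definition of a conal curve.
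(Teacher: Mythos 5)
Your proof is correct, and while the forward direction coincides exactly with the paper's (the affine segment $\gamma(s)=x+s(y-x)$ with constant velocity $y-x\in C$), your reverse direction takes a genuinely different route. The paper tests against the dual cone: for every $\lambda\in C^*\setminus\{0\}$ the scalar function $s\mapsto\lambda(\gamma(s))$ is nondecreasing because $\lambda(\gamma'(s))\geq 0$, hence $\lambda(y-x)\geq 0$ for all such $\lambda$, and then $y-x\in C$ follows from the bipolar characterization of a closed convex cone (implicitly, $C=C^{**}$). You instead integrate the velocity directly: on each smooth piece the increment $\gamma(t_j)-\gamma(t_{j-1})=\int_{t_{j-1}}^{t_j}\gamma'(s)\,ds$ is a limit of Riemann sums which are nonnegative combinations of elements of $C$, so it lies in $C$ by convexity, positive homogeneity and closedness, and the telescoping sum finishes the argument via $C+C\subset C$. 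What each approach buys: the paper's dual-cone argument is shorter and dispenses with the piecewise bookkeeping (continuity of the monotone scalar function handles the corner points automatically), but it silently relies on the duality fact $C^{**}=C$; your argument stays entirely at the level of the cone's defining axioms and is thus more self-contained, at the mild cost of tracking the partition and the Riemann-sum limit. Both are complete proofs, and you are right that none of (H1)--(H3) is needed.
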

\begin{proof}
	If $x\leq y$, i.e., $y-x\in C$, then we choose a curve $\alpha(s)=sy+(1-s)x$, where $s\in [0,1]$. Thus, $\alpha(0)=x$, $\alpha(1)=y$, and $\frac{d}{ds}\alpha(s)=y-x\in C$. So, $\alpha(s)$ is a conal curve and $x\leq_M y$.
	
	If $x\leq_M y$, then there is a conal curve $\alpha : [0,1] \to M$ such that $\alpha(0)=x$, $\alpha(1)=y$ and $\frac{d}{ds}\alpha(s)\in C$. For any $\lambda\in C^*\backslash \{0\}$, where $C^*$ is the dual cone of $C$, $\lambda(\frac{d}{ds}\alpha(s))\geq 0$. On the other hand, $\lambda(y-x)=\lambda(\alpha(1)-\alpha(0))=\lambda(\alpha(1))-\lambda(\alpha(0))$. Since $\frac{d}{ds}\lambda(\alpha(s))=\lambda(\frac{d}{ds}\alpha(s))\geq 0$, then $\lambda(\alpha(1))\geq \lambda(\alpha(0))$. So, $\lambda(y-x)\geq 0$. Thus, we obtain that $y-x\in C$, i.e., $x\leq y$.
\end{proof}
And a similar result can be obtained for the order ``$\ll$" with ``$\ll_M$", where $x\ll y$ if and only if $y-x\in \text{Int} C$.

Let $\frac{dx}{ds}=f(x)$ be a dynamical system in $\mathbb{R}^n$ with the flow $\varphi_t$. The system is said to be \textit{monotone} with respect to partial order ``$\leq$" if $\varphi_t(x)\leq \varphi_t(y)$ whenever $x\leq y$ and $t\geq 0$ and \textit{stongly monotone} if $\varphi_t(x)\ll \varphi_t(y)$ whenever $x\leq y$, $x\neq y$ and $t>0$ (see \cite{HS05} and \cite{S95}).

The following Proposition shows that in $\mathbb{R}^n$, a monotone system is differentially positive (see \cite[Theorem 1]{ForniandSepulchre16}).

\begin{prop}\label{system equivalence}
	Let $M$ be the $n$-dimensional Eucliead space and $C$ is a convex cone in $M$ such that $C$ forms a constant cone field $C_M$. Then a system is monotone with respect to partial order ``$\leq$" generated by cone $C$ if and only if this system is differentially positive with respect to cone field $C_M$.
\end{prop}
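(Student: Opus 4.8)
The plan is to prove the two implications separately, using Proposition \ref{order equivalence} throughout to identify the conal order ``$\leq_M$'' generated by the constant cone field $C_M$ with the vector-space order ``$\leq$'' generated by $C$ (so that $x\leq y$ is the same as $y-x\in C$).

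First I would show that differential positivity implies monotonicity, essentially repeating the argument of Proposition \ref{monotone property} but with $C$ in place of $\text{Int}\,C$. Suppose $x\leq y$ and set $v:=y-x\in C$. Take the segment $\gamma(s)=x+sv$, $s\in[0,1]$; since the cone field is constant, $\gamma'(s)=v\in C=C_M(\gamma(s))$, so $\gamma$ is a conal curve from $x$ to $y$. For fixed $t\geq 0$ the curve $s\mapsto\varphi_t(\gamma(s))$ has $\frac{d}{ds}\varphi_t(\gamma(s))=d\varphi_t(\gamma(s))\gamma'(s)=d\varphi_t(\gamma(s))v\in C$ by differential positivity, hence it is a conal curve joining $\varphi_t(x)$ to $\varphi_t(y)$. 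Therefore $\varphi_t(x)\leq_M\varphi_t(y)$, and by Proposition \ref{order equivalence}, $\varphi_t(x)\leq\varphi_t(y)$, i.e.\ the system is monotone.

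Conversely I would deduce differential positivity from monotonicity by a difference-quotient argument. Fix $x\in M$, $t\geq 0$ and $v\in C$. For each $\epsilon>0$ we have $(x+\epsilon v)-x=\epsilon v\in C$, so $x\leq x+\epsilon v$; monotonicity together with Proposition \ref{order equivalence} yields $\varphi_t(x)\leq\varphi_t(x+\epsilon v)$, that is, $\varphi_t(x+\epsilon v)-\varphi_t(x)\in C$. Since $C$ is a cone (invariant under positive scaling), $\frac{1}{\epsilon}\bigl(\varphi_t(x+\epsilon v)-\varphi_t(x)\bigr)\in C$ for every $\epsilon>0$. Because $f$ is smooth, $\varphi_t$ is $C^1$ in the initial condition, so the left-hand side converges to $d\varphi_t(x)v$ as $\epsilon\to 0^+$; as $C$ is closed, the limit lies in $C=C_M(\varphi_t(x))$. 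Hence $d\varphi_t(x)C_M(x)\subseteq C_M(\varphi_t(x))$ for all $x$ and $t\geq 0$, which is exactly differential positivity.

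The routine ingredients are the identification of the two orders via Proposition \ref{order equivalence} and the smooth dependence of $\varphi_t$ on initial data; the only step needing a little care is the passage to the limit in the converse direction, where both the closedness of $C$ (not merely of $C\setminus\{0\}$) and its invariance under positive scaling are essential. The strictly-positive variant (monotonicity versus the strong form of differential positivity, with $\text{Int}\,C$ in place of $C$) would follow by the same scheme, using strictly conal curves and the openness of $\text{Int}\,C$.
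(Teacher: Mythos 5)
Your proposal is correct and follows essentially the same route as the paper: identify the two orders via Proposition \ref{order equivalence}, push a conal curve (you use the straight segment) through the flow for one direction, and use a difference-quotient limit with the closedness and scaling invariance of $C$ for the converse. The paper's converse uses a conal curve $\gamma$ with $\gamma(0)=x$, $\gamma'(0)=v$ rather than the explicit segment $x+\epsilon v$, but the argument is the same in substance.
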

\begin{proof}
	By Proposition \ref{order equivalence}, order ``$\leq$" and order ``$\leq_M$" are equivalent.
	
	Suppose that the system is differentially positive. For $x\leq y$, there exists a conal curve $\gamma : [0,1]\to M$ such that $\gamma(0)=x$, $\gamma(1)=y$ and $\frac{d}{ds}\gamma(s)\in C$. Then, we obtain that $\varphi_t(\gamma(\cdot))$ is also a conal curve for each $t\geq 0$. In fact, $\frac{d}{ds}\varphi_t(\gamma(s))=d\varphi_t(\gamma(s))\frac{d}{ds}\gamma(s)$. Since $\varphi_t$ is differentially positive and $\frac{d}{ds}\gamma(s)\in C$, then $\frac{d}{ds}\varphi_t(\gamma(s))\in C$. So, $\varphi_t(x)\leq \varphi_t(y)$ for $t\geq 0$. And hence, $\varphi_t$ is monotone with respect to ``$\leq$".
	
	If the system is monotone. For $x\in M$ and $v\in C$, then there is a conal curve $\gamma(s)$, $s\in I$, such that $\gamma(0)=x$, $\frac{d}{ds}\gamma(s)\vert_{s=0}=v$ and $x\leq \gamma(s)$ with $s\geq 0$. Since $\varphi_t$ is monotone, then for each $t\geq 0$, $\varphi_t(x)\leq \varphi_t(\gamma(s))$, i.e., $\varphi_t(\gamma(s))-\varphi_t(\gamma(0))\in C$. Thus, $\frac{d}{ds}\varphi_t(\gamma(s))\vert_{s=0}=\lim\limits_{\Delta s\to 0^+}\frac{\varphi_t(\gamma(\Delta s))-\varphi_t(\gamma(0))}{\Delta s}\in C$. So, we obtain that $d\varphi_t(x)v=\frac{d}{ds}\varphi_t(\gamma(s))\vert_{s=0}\in C$. And hence, the system is differentially positive.
\end{proof}

%\section*{Acknowledgments}
%The authors are greatly indebted to Doctor Xizhuang Xie for two figures which led to improvement of this paper.


\begin{thebibliography}{10}



\bibitem{Arvanitoyeorgos}
A. Arvanitoyeorgos, \rm{An introduction to Lie groups and the geometry of homogeneous spaces},
vol 22. American Mathematical Society, Providence, 2003.



\bibitem{Beemandehrlich81}
J. K. Beem and P. E. Ehrlich, \rm{Global Lorentzian Geometry},
Monographs and Textbooks in Pure and Applied Mathematics, 67. Marcel Dekker, Inc., New York, 1981.



\bibitem{ChruscielandGrant12}
P. T. Chru\'{s}ciel and J. D. E. Grant, \rm{On Lorentzian causality with continuous metrics},
Class. Quantum Grav. \textbf{29}(2012), 145001.



\bibitem{Chueshov02}
I. Chueshov, \rm{Monotone Random Systems Theory and Applications},
Lecture Notes in Mathematics, vol. 1779, Springer-Verlag, Berlin, 2002.



\bibitem{Faraut87}
J. Faraut, \rm{Alg\`{e}bres de Volterra et transformation de Laplace sph\'{e}rique sur certains espaces sym\'{e}triques ordonn\'{e}s},
Symp. Math. \textbf{29}(1987), 183-196.



\bibitem{Faraut91}
J. Faraut, \rm{Espaces sym\'{e}triques ordonn\'{e}s et alg\`{e}bres de Volterra},
J. Math. Soc. Japan, \textbf{43}(1991), 133-147.


\bibitem{ForniandSepulchre14}
F. Forni and R. Sepulchre, \rm{Differential analysis of nonlinear systems: Revisiting the pendulum example},
in 53rd IEEE Conference of Decision and Control, Los Angeles, IEEE, Piscataway, NJ, 2014, 3848-3859.



\bibitem{Forni2015}
F. Forni, \rm{Differential positivity on compact sets},
in 2015 54th IEEE Conference on Decision and Control (CDC), IEEE, Piscataway, NJ, 2015, 6355-6360.




\bibitem{ForniandSepulchre16}
F. Forni and R. Sepulchre, \rm{Differentially positive systems},
IEEE Trans. Automat. Control, \textbf{61}(2016), 346-359.


\bibitem{Garcia2021}
L. Garc\'{i}a-Heveling, \rm{Causality theory of spacetimes with continuous Lorentzian metrics revisited},
Class. Quantum Grav. \textbf{38}(2021), 145028.




\bibitem{GutandLevichev}
A. K. Guts and A. V. Levichev, \rm{On the Foundations of Relativity Theory},
Sov. Math. Dokl. \textbf{30}(1984), 253-257.



\bibitem{Hawkingandellis73}
S. H. Hawking and G. F. R. Ellis, \rm{The large scale structure of space-time},
Cambridge University Press, Cambridge, 1973.



\bibitem{Hawkingandsachs74}
S. H. Hawking and R. K. Sachs, \rm{Causally continuous space-times},
Commun. Math. Phys. \textbf{35}(1974), 287-296.




\bibitem{Helgason}
S. Helgason, \rm{Differential Geometry, Lie Groups, and Symmetric Spaces},
Amer. Math. Soc., 2001.



\bibitem{HessandPolacik}
P. Hess and P. Pol\'{a}\v{c}ik, \rm{Boundedness of prime periods of stable cycles and convergence to fixed points in discrete monotone dynamical systems},
SIAM J. Math. Anal. \textbf{24}(1993), 1312-1330.




\bibitem{HilgertHofmannLawson}
J. Hilgert, K. H. Hofmann and J. Lawson, \rm{Lie groups, convex cones, and semigroups},
Oxford University Press, Oxford, 1989.


\bibitem{HilgertNeeb93}
J. Hilgert and K.-H. Neeb, \rm{Lie semigroups and their applications},
Lecture Notes in Math. \textbf{1552}, Springer, 1993.



\bibitem{HilgertNeeb}
J. Hilgert and K.-H. Neeb, \rm{Wiener-Hopf operators on ordered homogeneous spaces, I},
J. Funct. Anal. \textbf{132}(1995), 86-118.


\bibitem{HilgertOlafsson97}
J. Hilgert and G. \'{O}lafsson, \rm{Causal Symmetric Spaces. Geometry and Harmonic Analysis},
Perspectives in Mathematics, vol. 18, Academic Press, 1997.




\bibitem{H85}
M. W. Hirsch, \rm{Systems of differential equations which are competitive or cooperative II: convergence almost everywhere}, SIAM J. Math. Anal. \textbf{16}(1985), 423-439.


\bibitem{H88}
M. W. Hirsch, \rm{Stability and convergence in strongly monotone dynamical systems},
J. Reine Angew. Math. \textbf{383}(1988), 1-53.




\bibitem{HS05}
M. W. Hirsch and H. L. Smith, \rm{Monotone dynamical systems},
Handbook of Differential Equations: Ordinary Differential Equations, vol.~2, Elsevier, Amsterdam 2005.


\bibitem{Jiangandzhao05}
J. Jiang and X. Zhao, \rm{Convergence in monotone and uniformly stable skew-product semiflows with applications},
J. Reine Angew. Math. \textbf{589}(2005), 21-55.



\bibitem{Kato76}
T. Kato, \rm{Perturbation theory for linear operators},
Springer-Verlag, Berlin, 1976.



\bibitem{KLS89}
M. A. Krasnosel'skij, Je. A. Lifshits and A. V. Sobolev, \rm{Positive linear systems, the method of positive operators},
Heldermann Verlag, Berlin, 1989.




\bibitem{Lawson89}
J. D. Lawson, \rm{Ordered manifolds, invariant cone fields, and semigroups},
Forum Math. \textbf{1}(1989), 273-308.


\bibitem{LeeGTM218}
J. M. Lee, \rm{Introduction to smooth manifolds},
volume 218 of Graduate Texts in Mathematics, Springer, $2$ edition, 2013.


\bibitem{LemmensNussbaum}
B. Lemmens and R. D. Nussbaum, \rm{Nonlinear Perron-Frobenius Theoty},
Cambridge, Cambridge Univ. Press, 2012.


\bibitem{Levichev}
A. Levichev, \rm{Sufficient conditions for the nonexistence of closed causal curves in homogeneous space-times},
Izv. Phys. \textbf{10}(1985), 118-119.


\bibitem{Mierczynskiandshen08}
J. Mierczy\'{n}ski and W. Shen, \rm{Spectral Theory for Random and Nonautonomous Parabolic Equations and Applications},
Chapman Hall/CRC Monogr. Surv. Pure Appl. Math., vol. 139, CRC Press, Boca Raton, FL, 2008.



\bibitem{Morinellineeb21}
V. Morinelli and K.-H. Neeb, \rm{Covariant homogeneous nets of standard subspaces},
Commun. Math. Phys. \textbf{386}(2021), 305-385.




\bibitem{MostajeranSepulchre18orderingmatrices}
C. Mostajeran and R. Sepulchre, \rm{Ordering positive definite matrices},
Inf. Geom. \textbf{1}(2018), 287-313.



\bibitem{MostajeranSepulchre18homogeneous}
C. Mostajeran and R. Sepulchre, \rm{Monotonicity on homogeneous spaces},
Math. Control Signals Systems, \textbf{30}(2018), 1-25.




\bibitem{MostajeranSepulchre18SIAM}
C. Mostajeran and R. Sepulchre, \rm{Positivity, monotonicity, and consensus on Lie groups},
SIAM J. Control Optim. \textbf{56}(2018), 2436-2461.




\bibitem{Neeb91}
K.-H. Neeb, \rm{Conal orders on homogeneous spaces},
Inventiones mathematicae \textbf{104}(1991), 467-496.


\bibitem{Neebolafsson21}
K.-H. Neeb and G. \'{O}lafsson, \rm{Nets of standard subspaces on Lie groups},
Advances in Math. \textbf{384}(2021), 107715.



\bibitem{Neebolafsson21b}
K.-H. Neeb and G. \'{O}lafsson, \rm{Wedge domains in compactly causal symmetric spaces},
Int. Math. Res. Not. IMRN \textbf{12}(2023), 10209-10312.



\bibitem{Neebolafsson23}
K.-H. Neeb and G. \'{O}lafsson, \rm{Wedge domains in non-compactly causal symmetric spaces},
Geometriae Dedicata \textbf{217}(2023) 30.



\bibitem{Neeborstedolafsson21}
K.-H. Neeb, B. \O rsted and G. \'{O}lafsson, \rm{Standard subspaces of Hilbert spaces of holomorphic functions on tube domains},
Commun. Math. Phys. \textbf{386}(2021), 1437-1487.



\bibitem{Niu19}
L. Niu, \rm{Eventually competitive systems generated by perturbations},
Electronic Journal of Differential Equations \textbf{121}(2019) 1-12.


\bibitem{NiuXie23}
L. Niu and X. Xie, \rm{Generic Poincar\'{e}-Bendixson theorem for singularly perturbed monotone systems with respect to cones of rank-2},
Proc. Amer. Math. Soc. \textbf{151} (2023) 4199-4212.


\bibitem{Olshanskii82}
G. I. Olshanskii, \rm{Convex cones in symmetric Lie algebras, Lie semigroups and invariant causal (order) structures on pseudo-Riemann symmetric spaces},
Soviet Math. Dokl., Amer. Math. Soc. translation \textbf{26}(1982) 97-101.




\bibitem{Penrose72}
R. Penrose, \rm{Techniques of differential topology in relativity},
Regional Conference Series in Applied Math. vol. 7, SIAM, Philadelphia, 1972.





\bibitem{Polacik89}
P. Pol\'{a}\v{c}ik, \rm{Convergence in smooth strongly monotone flows defined by semilinear parabolic equations},
J. Differ. Equ. \textbf{79}(1989) 89-110.



\bibitem{Polacik02}
P. Pol\'{a}\v{c}ik, \rm{Parabolic equations: asymptotic behavior and dynamics on invariant manifolds},
in: Handbook on Dynamical systems, vol. 2, Elsevier, Amsterdam, 2002, 835-883.



\bibitem{PolacikandTerescak92}
P. Pol\'{a}\v{c}ik and I. Tere\v{s}\v{c}\'{a}k, \rm{Convergence to cycles as a typical asymptotic behavior in smooth strongly monotone discrete-time dynamical systems},
Arch. Ration. Mech. Anal. \textbf{116}(1992) 339-360.






\bibitem{Pugh}
C. Pugh, \rm{An improved Closing Lemma and general density theorem},
Amer. J. Math. \textbf{89}(1967) 1010-1021.


\bibitem{Segal76}
I. E. Segal, \rm{Mathematical Cosmology and Extragalactic Astronomy},
Academic Press, New York, 1976.


\bibitem{ShenandYi}
W. Shen and Y. Yi, \rm{Almost automorphic and almost periodic dynamics in skew-product semiflows},
Mem. Am. Math. Soc. \textbf{136}(647) 1998.



\bibitem{S95}
H. L. Smith, \rm{Monotone Dynamical Systems, an introduction to the theory of competitive and cooperative systems},
Math. Surveys and Monographs, 41, Amer. Math. Soc., Providence, Rhode Island 1995.



\bibitem{S17}
H. L. Smith, \rm{Monotone dynamical systems: Reflections on new advances and applications}, Discrete Contin. Dyn. Syst. \textbf{37}(2017), 485-504.





\bibitem{SmithThieme91}
H. L. Smith and H. R. Thieme, \rm{Convergence for strongly ordered preserving semiflows},
SIAM J. Math. Anal. \textbf{22}(1991), 1081-1101.


\bibitem{Terescak}
I. Tere\v{s}\v{c}\'{a}k, \rm{Dynamics of $C^1$ smooth strongly monotone discrete-time dynamical systems},
preprint, Comenius University, Bratislava, 1994.



\bibitem{Viberg}
E. B. Vinberg, \rm{Invariant cones and orderings in Lie groups},
Funct. Anal. Appl. \textbf{14}(1980), 1-13.


\bibitem{WangandYao20}
Y. Wang and J. Yao, \rm{Dynamics alternatives and generic convergence for $C^1$-smooth strongly monotone discrete dynamical systems},
J. Differ. Equ. \textbf{269}(2020), 9804-9818.



\bibitem{WangandYao22}
Y. Wang and J. Yao, \rm{Sharpened dynamics alternative and its $C^1$-robustness for strongly monotone discrete dynamical systems},
J. Funct. Anal. \textbf{283}(2022), 109538.






	
\end{thebibliography}
\end{document}